\theoremstyle{plain}
 \newtheorem{theorem}{Theorem}[section]
 \newtheorem{proposition}[theorem]{Proposition}
 \newtheorem{lemma}[theorem]{Lemma}
\theoremstyle{definition}
 \newtheorem{definition}[theorem]{Definition}
\theoremstyle{remark}
 \newtheorem{remark}[theorem]{Remark}
\begin{document}
\title[Levels of stable rationality]{Linear bounds for levels of stable rationality}
\author[Bogomolov]{Fedor Bogomolov}
\address{F. Bogomolov, Courant Institute of Mathematical Sciences\\
251 Mercer St.\\
New York, NY 10012, U.S.A., \emph{and}}
\address{Laboratory of Algebraic Geometry, GU-HSE\\
7 Vavilova Str.\\
Moscow, Russia, 117312}
\email{bogomolo@courant.nyu.edu}
\author[B\"ohning]{Christian B\"ohning}
\address{Christian B\"ohning, Fachbereich Mathematik der Universit\"at Hamburg\\
Bundesstra\ss e 55\\
20146 Hamburg, Germany, \emph{and}}
\address{Mathematisches Institut der Georg-August-Universit\"at G\"ottingen\\
Bunsenstr. 3-5\\
37073 G\"ottingen, Germany}
\email{christian.boehning@math.uni-hamburg.de}
\email{boehning@uni-math.gwdg.de}
\author[Graf von Bothmer]{Hans-Christian Graf von Bothmer}
\address{Hans-Christian Graf von Bothmer, Mathematisches Institut der Georg-August-Universit\"at G\"ottingen\\
Bunsenstr. 3-5\\
37073 G\"ottingen, Germany}
\email{bothmer@uni-math.gwdg.de}

\thanks{Second and third author supported by the German Research Foundation
(Deutsche Forschungsgemeinschaft (DFG)) through
the Institutional Strategy of the University of G\"ottingen}

\maketitle
\newcommand{\PP}{\mathbb{P}} 
\newcommand{\QQ}{\mathbb{Q}} 
\newcommand{\ZZ}{\mathbb{Z}} 
\newcommand{\CC}{\mathbb{C}} 
\newcommand{\rmprec}{\wp}
\newcommand{\rmconst}{\mathrm{const}}
\newcommand{\xycenter}[1]{\begin{center}\mbox{\xymatrix{#1}}\end{center}} 
\newboolean{xlabels} 
\newcommand{\xlabel}[1]{ 
                        \label{#1} 
                        \ifthenelse{\boolean{xlabels}} 
                                   {\marginpar[\hfill{\tiny #1}]{{\tiny #1}}} 
                                   {} 
                       } 
\setboolean{xlabels}{false} 

\

\begin{abstract}
Let $G$ be one of the groups $\mathrm{SL}_n (\CC )$, $\mathrm{Sp}_{2n} (\CC )$, $\mathrm{SO}_m (\CC )$, $\mathrm{O}_m (\CC )$, or $G_2$. For a generically free $G$-representation $V$, we say that $N$ is a level of stable rationality for $V/G$ if $V/G \times \PP^N$ is rational. In this paper we improve known bounds for the levels of stable rationality for the quotients $V/G$. In particular, their growth as functions of the rank of the group is linear for $G$ one of the classical groups.\end{abstract}

\section{Introduction} \xlabel{sIntroduction}
In the birational geometry of algebraic varieties, an important problem consists in determining the birational types of quotient spaces $V/G$ where $V$ is a generically free linear representation of the linear algebraic group $G$, both defined over $\CC$, which will be our base field. We will suppose in the sequel that $G$ is connected. The quotient $V/G$ is said to be stably rational \emph{of level} $N$ if $V/G \times \PP^N$ is rational.  Whether or not $V/G$ is stably rational (of some level), is a property of the group $G$ and not of the particular generically free representation $V$ by the no-name lemma \cite{Bo-Ka}. \\
It will be desirable to obtain good bounds on levels of stable rationality $N$ for $V/G$ as above, for a given $G$ and as large as possible a class $\mathcal{C}$ of generically free $G$-representations $V$. By this we generally understand that  one wants to determine an explicit function $N = N( r, \: V)$ of $r$ and $V$ (in the given class $\mathcal{C}$) such that $V/G \times \PP^{N}$ is rational, and  $N(r, \: V)$ is small, where $r$ is the rank of $G$. More precisely, if $G$ is a group running through one of the infinite series of simple groups of type $A_r$, $B_r$, $C_r$, $D_r$ (of some fixed isogeny type), one would like to determine a function $N = N(r)$ which gives a level of stable rationality for generically free $G$-quotients $V/G$, uniformly for all $V$ in some fixed large class $\mathcal{C}$, and such that the asymptotic behaviour of $N(r)$ is $N(r) = O(r)$ (Landau symbol).  This is what is meant by ``linear bounds" in the title. For exceptional groups of types $G_2$, $F_4$, $E_6$, $E_7$ and $E_8$ one would like to find a small constant $N$ that gives a level of stable rationality. One should mention that these types of questions become rather simpler for groups with a nontrivial radical, cf. \cite{BBB10}, so that the semisimple case is fundamental. Let us also mention that results of this sort of course have applications to the rationality question for algebraic varieties because many varieties $X$ can be fibred over generically free linear group quotients $V/G$, with rational general fibre, so that the total space is birational to the product of base and fibre.

\

We will obtain $N(r) = O(r)$ for the groups $\mathrm{SL}_n (\CC )$, $\mathrm{Sp}_{2n} (\CC )$, $\mathrm{SO}_{m} (\CC )$, and also the nonconnected groups $\mathrm{O}_m (\CC )$, for large classes of representations $\mathcal{C}$. Moreover, we will improve the bound for $G_2$ somewhat. Before describing the results in more detail, we mention that previously one had only $N(r) = O(r^2)$ for these classical groups. To be precise, what was known previously, at least to us, can be summarized in the following table (the class $\mathcal{C}$ this applies to is the class  of all generically free $G$-representations):

\

\begin{center}
\begin{tabular}{| c |  c |}\hline
Group $G$  & Level of stable rationality $N$ \\  \hline\hline
$\mathrm{SL}_n (\CC )$ &  $n^2 -1$ \\ \hline
$\mathrm{SO}_{2n+1} (\CC )$ &  $ 2n^2 + 3n +1$ \\ \hline
$\mathrm{Sp}_{2n} (\CC )$ & $2n^2 + n$\\ \hline
$\mathrm{SO}_{2n} (\CC )$ & $ 2n^2 +n$  \\ \hline
$G_2$  &   $17$ \\ \hline
\end{tabular}  
\end{center}

\

Stable rationality was also known for the orthogonal groups $\mathrm{O}_{2n} (\CC )$ and $\mathrm{O}_{2n+1} (\CC )$, with the same levels as for the special orthogonal groups, and for the simply connected exceptional groups $F_4$, $E_6$, $E_7$, cf. \cite{Bogo86}. We point out that stable rationality remains open for the Spin-groups, and for $E_8$. Our methods here do not seem to yield substantial improvements for $F_4$, $E_6$, $E_7$. Let us comment briefly on how the results in the table are obtained: $\mathrm{SL}_n (\CC )$ and $\mathrm{Sp}_{2n} (\CC )$ are special groups (every \'{e}tale locally trivial principal bundle for them is Zariski locally trivial), so for a generically free representation $V$ the quotient is stably rational of level their dimensions. For $\mathrm{SO}_m (\CC )$ one considers the action on a variety $X$ which is birational to a tower of equivariant vector bundles over an $\mathrm{SO}_m (\CC )$-representation as base: $X$ consists of orthogonal $m$-frames $(v_1, \dots , v_m)$ in $\CC^m$ with $\langle v_i , v_i \rangle \neq 0$. Note that $\dim X = \dim \mathrm{SO}_m (\CC ) + m$, which is the value given in the table. In fact, $V/\mathrm{SO}_m (\CC ) \times X = (V \times X) /\mathrm{SO}_m (\CC )$ by the no-name lemma, and in $X$ there is a $(\mathrm{SO}_m (\CC ), \; H)$-section $\Pi$ where $H$ is an elementary abelian $2$-group and $\Pi$ is a product of generic lines. So $(V \times X) /\mathrm{SO}_m (\CC ) = (V/H) \times \Pi $ is rational. For $G_2$ the argument is similar, but for $X$ one takes instead 
\[
X = \{ (A, B, C) \, \mid \, A \perp B, \; A, B, AB \perp C \; \mathrm{and}\; A,B,C \; \mathrm{of}\; \mathrm{nonzero} \; \mathrm{norm} \} 
\]
as a subset of $\mathbb{O}^3$, where $\mathbb{O}$ are traceless octonions. The action of $G_2$ is free on $X$. See \cite{Bogo86} for details.

\

Let us now describe our results in more detail. Let $G$ be one of the groups $\mathrm{SL}_n (\CC )$, $\mathrm{Sp}_{2n} (\CC )$, $\mathrm{SO}_{m} (\CC )$, $\mathrm{O}_m (\CC )$ or $G_2$. We will first narrow down the class of representations $\mathcal{C}$ of these groups which we will consider and make a statement about. Namely, $\mathcal{C}$ contains precisely the $G$-representations $V$ of the form $V = W \oplus S^{\epsilon}$, where: $W$ is an irreducible representation of $G$ whose ineffectivity kernel (a finite central subgroup) coincides with the stabilizer in general position. $S$ is a standard representation for each of the groups involved, namely $\CC^n$ for $\mathrm{SL}_n (\CC )$, $\CC^{2n}$ for $\mathrm{Sp}_{2n} (\CC )$, $\CC^m$ for $\mathrm{SO}_m (\CC )$ and $\mathrm{O}_m (\CC )$, $\CC^7$ for $G_2$. Here $\epsilon \in \{ 0, 1\}$, and $\epsilon =0$ if and only if $W$ is already $G$-generically free. Thus $V$ will always be $G$-generically free. The following table summarizes our main results Theorems \ref{tSLIrreducible}, \ref{tSp}, \ref{tO}, \ref{tSO}, \ref{tG2}.

\

\begin{center}
\begin{tabular}{| c |  c |}\hline
Group $G$  & Level of stable rationality $N$ \\ 
                      & for $V/G$ for $V\in\mathcal{C}$ \\  \hline\hline
$\mathrm{SL}_n (\CC )$, $n\ge 1$ &  $n$ \\ \hline
$\mathrm{SO}_{2n+1} (\CC )$, $n\ge 2$ &  $ 2n+1$ \\ \hline
$\mathrm{O}_{2n+1} (\CC )$, $n\ge 2$ &  $2n+1$ \\ \hline
$\mathrm{Sp}_{2n} (\CC ), \; n\ge 4$ & $2n$\\ \hline
$\mathrm{SO}_{2n} (\CC )$, $n\ge 2$ & $ 2n$ except for $W$ with highest weight\\
                                                  & $c\omega_{n-1} $ or $c\omega_n$, $c\in \mathbb{N}$,\\
                                                  & where we know only $4n$.\\ \hline
$\mathrm{O}_{2n} (\CC )$, $n\ge 2$ &    $2n$    \\ \hline
$G_2$  &   $7$ \\ \hline
\end{tabular}  
\end{center}

\

Here $\omega_{n-1}$ and $\omega_n$ denote the fundamental weights corresponding to the last two nodes of the Dynkin diagram of $\mathfrak{so}_{2n} (\CC )$. 
Note that confining ourselves to the class $\mathcal{C}$ is not very restrictive: any generically free $G$-representation containing a representation in $\mathcal{C}$ as a summand will have a quotient which is stably rational of the same level. By a method involving Severi Brauer varieties (exemplified in Proposition \ref{pInductionStepSL}) one could also get the result for any generically free $G$- representation containing an irreducible summand $W$ whose stabilizer in general position coincides with the ineffectivity kernel. Then it would remain to analyze those representations $V$ which are sums of irreducible representations with a nontrivial stabilizer in general position (there is a finite list for each fixed rank), but such that $V$ is generically free. As we felt that this would have made the paper much longer without adding too much content, we have not done this.\\
The basic strategy for the proof of our result is not difficult: for $V$ in the class $\mathcal{C}$, we consider in each of the above cases $V \oplus S$. The $G$-action on $V\oplus S$ has a section given by the normalizer $G'$ of the stabilizer in general position in $S$. This $G'$ is a group of smaller rank, and in general, $V$ will decompose with respect to the semisimple part of $G'$ into several summands (however, $G'$ is often nonreductive). Thus we may hope to prove rationality of $V/G'$, either by induction on the rank, or because $V$ becomes highly reducible as module for the semisimple part of $G'$. However, to carry this program to the end, involves a long (but joyful) ride through the complete representation theory of the groups involved. 

\

Here is a short road map of the paper. The proof for $\mathrm{SL}_n (\CC )$ is an induction on the rank. As both the induction step and induction base are somewhat lengthy, we have chosen to separate them; the former is in section \ref{sInductionProcessSL}, the latter in section \ref{sInductionBaseSL}. Section \ref{sInductionProcessSp} contains the proof for the symplectic group. It is not an induction, but what is used is that under the restriction $\mathrm{Sp}_{2n} (\CC ) \downarrow \mathrm{Sp}_{2n-2} (\CC )$, representations of $\mathrm{Sp}_{2n} (\CC )$ become highly reducible. One may then produce a generically free $G'$-quotient of $V$.  Section \ref{sOrthogonalGroups} treats the case of the orthogonal and special orthogonal groups. We need a technical result in this section on stabilizers in general position for the group $\mathrm{O}_{2n} (\CC )$. We put this into an appendix, Appendix  \ref{AppendixOrthogonalGroup}. Finally, the case of $G_2$ is covered in  section \ref{sG2}. This case is the most straight-forward of all of them. 

\

Let us mention that our results for $\mathrm{Sp}_{2n} (\CC )$ and $G_2$ hold also over $\mathbb{Q}$, but for the other groups we do not want to assert this as we are not sure if the Severi-Brauer method used in their proofs goes through.

\

Moreover, the results in the present article provide tools for proving rationality for generically free quotients of linear representations by groups with nontrivial unipotent radicals, a program which was begun in the article \cite{BBB10}. 

\

\textbf{Acknowledgments.} During the work the first author was partially supported
by NSF grant DMS-1001662 and by AG Laboratory GU-HSE grant RF government ag. 11 11.G34.31.0023.  
The second and third authors were supported by the German Research Foundation 
(Deutsche Forschungsgemeinschaft (DFG)) through 
the Institutional Strategy of the University of G\"ottingen.  
The second author wants to thank the University of Vienna for hospitality during a stay in the summer of 2010 where this work began to take shape.

\section{The induction process for $\mathrm{SL}_n (\CC )$}\xlabel{sInductionProcessSL}
First we note the following simple but useful result.

\begin{lemma}\xlabel{lOppositeParabolic}
Let $V$ be a generically free representation for a linear algebraic group $G$, $R$ a $G$-representation such that $R$ and the
dual 
$R^{\vee }$ have a dense
$G$-orbit. If $P$ resp. $P'$ are the stabilizers of a generic point in $R$ resp. $R^{\vee }$, then $V/P$ and $V/P'$ are
birationally equivalent.
\end{lemma}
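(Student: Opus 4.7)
The plan is to show both $V/P$ and $V/P'$ are birationally equivalent to $V/G \times \mathbb{A}^{\dim R}$. The key is to evaluate $(V \times R)/G$ in two different ways, and then do the same for $(V \times R^\vee)/G$.

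First, I would use the density hypothesis. Since $R$ has a dense $G$-orbit with stabilizer $P$, this orbit is $G$-equivariantly isomorphic to $G/P$, so $R$ is birational to $G/P$ as a $G$-variety. Now project $V \times R$ onto the second factor. Over the open orbit, the quotient $(V \times G/P)/G$ can be identified with $V/P$ via the map $(v, gP) \mapsto [g^{-1}v]_P$, which is well-defined and $G$-invariant. Consequently, $(V \times R)/G$ is birational to $V/P$.

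Second, I would project $V \times R$ onto $V$. This realizes $V \times R$ as a (trivial) $G$-equivariant vector bundle over $V$ of rank $\dim R$, where $G$ acts generically freely on the base $V$ by hypothesis. The no-name lemma \cite{Bo-Ka} then gives that $(V \times R)/G$ is birationally a vector bundle of rank $\dim R$ over $V/G$; since any vector bundle is birationally trivial, this quotient is birational to $V/G \times \mathbb{A}^{\dim R}$. A dimension count is consistent: $\dim V/P = \dim V - \dim P$ agrees with $\dim V/G + \dim R = \dim V - \dim G + \dim G/P$ precisely because of the density of $G \cdot r_0$ in $R$. Combining the two computations yields $V/P \sim V/G \times \mathbb{A}^{\dim R}$ birationally.

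Running the identical argument with $R$ replaced by $R^\vee$, and using $\dim R^\vee = \dim R$, gives $V/P' \sim V/G \times \mathbb{A}^{\dim R}$. Chaining the two equivalences through the common middle term produces the desired birational equivalence $V/P \sim V/P'$. The argument is essentially formal once the two standard identifications above are in hand; I expect no genuine obstacle, though some care is needed in invoking the no-name lemma to ensure the generic freeness hypothesis on $V$ transfers correctly to the bundle $V \times R \to V$ (which it does, since $G$ acts only on the base nontrivially in the generic fiber sense needed).
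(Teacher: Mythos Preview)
Your proposal is correct and follows essentially the same route as the paper: both arguments compute $(V\times R)/G$ in two ways (via the no-name lemma over $V$ and via the $(G,P)$-section coming from the dense orbit in $R$), then repeat for $R^{\vee}$ and chain through $V/G\times\mathbb{A}^{\dim R}$. The paper phrases the second identification as ``$V$ is a $(G,P)$-section in $V\oplus R$'' where you wrote down the explicit map $(v,gP)\mapsto [g^{-1}v]_P$, but the content is identical.
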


\begin{proof}
By the no-name lemma \cite{Bo-Ka} $(V \oplus R)/G$ is birational to $(V/G) \oplus R$. The latter is birational to $(V/G) \oplus 
R^{\vee }$, hence to $(V\oplus R^{\vee })/G$. On the other hand, $V$ is a $(G, P)$-section in $V\oplus R$, and a $(G,
P')$-section in $V\oplus R^{\vee }$.
\end{proof}

Let $V$ be an irreducible $\mathrm{SL}_n (\CC )$-representation. We have
\[
V\simeq \Sigma^{\lambda } \CC^n
\]
for some $n$-tuple of integers (the highest weight of $V$) $\lambda = (\lambda_1 , \dots , \lambda_n)$, $\lambda_1\ge \dots \ge
\lambda_n\ge 0 $ (where according to our convention e.g. $\Sigma^{1,1, 0, \dots , 0}\CC^n = \Lambda^2 \CC^n$). Let $e_1, \dots
, e_n$ be the standard basis of $\CC^n$, $e_1^{\vee } , \dots , e_n^{\vee }$ the dual basis in $(\CC^n)^{\vee }$. The
stabilizer $P$ of $e_1$ in $\mathrm{SL}_n (\CC )$ is a group isomorphic to the group $
\mathrm{SL}_{n-1} (\CC )\ltimes (\CC^{n-1})^{\vee }$. According to a classical branching law (\cite{G-W}, Chapter 8), the
representation
$V$ decomposes as a $\mathrm{SL}_{n-1} (\CC )$-representation as
\[
V = \bigoplus_{\mu } \Sigma^{\mu} \CC^{n-1}
\]
where $\mu = (\mu_1, \dots , \mu_{n-1})$ is a tuple of integers which \emph{interlaces} $\lambda$ which means
\[
\lambda_1 \ge \mu_1 \ge \lambda_2 \ge \dots \ge \mu_{n-1} \ge \lambda_n \, ,
\]
and the branching is multiplicity-free. As $P$-representation $V$ has a filtration
\begin{gather}\label{fFiltration}
0 \subset V_{1} \subset \dots \subset V_{l} = V
\end{gather}
with $Q_i:=V_i/V_{i-1}$ the maximal completely reducible subrepresentation in $V/V_{i-1}$, $i=1, \dots , l$,
 as in \cite{BBB10}, section 3, where we put $V_0 =0$. Here 
\[
l = (\lambda_1 + \dots + \lambda_{n-1}) - (\lambda_2 + \dots + \lambda_n) +1 = \lambda_1 - \lambda_n +1
\]
and $Q_i$ is the sum of those $V_{\mu}$, $\mu$ interlacing $\lambda$, with
\[
\sum_{j=1}^{n-1} \mu_j = (\lambda_2 + \dots + \lambda_n) + i - 1\, .
\]
In particular, $Q_1$ and $Q_l$ are irreducible. We may also consider the stabilizer $P'$ inside $\mathrm{SL}_n (\CC )$ of the
point $e_n^{\vee }$ in $(\CC^n)^{\vee }$ which is isomorphic to $\mathrm{ASL}_{n-1}(\CC ) = \mathrm{SL}_{n-1} (\CC )\ltimes
\CC^{n-1}$. The semisimplification of $V$ as $P'$-module is of course the same as that as $P$-module, but the
associated filtration is reversed. A remark that will be sometimes useful below is
\begin{remark}\xlabel{rSpan}
The $P$-span of $Q_l$ is $V$. A similar fact holds for $P'$. This is because otherwise the maximal completely reducible
submodule of $V^{\vee }$ would be reducible which is not the case as the $P$-module $V^{\vee }$ is the restriction to $P$ of
the $\mathrm{SL}_n (\CC )$-module $V^{\vee }$ which has a unique line of highest weight vectors if $V$ is irreducible.
\end{remark}

We propose to prove first on our way to the main result

\begin{theorem}\xlabel{tSLIrreducible}
For any $n$ and any generically free irreducible $\mathrm{SL}_n (\CC )$-representation $V$, $V/\mathrm{SL}_n (\CC )$ is stably
rational of level
$n$. If a central quotient $\mathrm{SL}_n (\CC )/(\ZZ / m \ZZ )$, $m \mid n$, $m \neq 1$ acts generically freely in $V$, then
$(V\oplus (\CC^n)^{\vee })/\mathrm{SL}_n (\CC )$ is stably rational of level $n$. 
\end{theorem}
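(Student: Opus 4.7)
The plan is to prove both parts simultaneously by induction on $n$, with the induction base handled separately in Section \ref{sInductionBaseSL}. Assuming the two statements have been established for all ranks $\le n-1$, I describe the induction step.

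The first move is a reduction common to both parts via the no-name lemma \cite{Bo-Ka}. For Part 1, generic freeness of $V$ gives
\begin{equation*}
V/\mathrm{SL}_n (\CC ) \times \PP^n \sim (V \oplus (\CC^n)^{\vee })/\mathrm{SL}_n (\CC ),
\end{equation*}
so the claim is equivalent to rationality of $(V \oplus (\CC^n)^{\vee})/\mathrm{SL}_n (\CC )$. Since the orbit of $e_n^{\vee}$ in $(\CC^n)^{\vee}$ is dense with stabilizer $P' = \mathrm{ASL}_{n-1}(\CC ) = \mathrm{SL}_{n-1}(\CC )\ltimes \CC^{n-1}$, Lemma \ref{lOppositeParabolic} (applied to $R = (\CC^n)^{\vee}$) identifies this birationally with $V/P'$. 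For Part 2, $V \oplus (\CC^n)^{\vee}$ is $\mathrm{SL}_n(\CC )$-generically free, since the scalar action of $\ZZ /m\ZZ \subset Z(\mathrm{SL}_n(\CC))$ on $(\CC^n)^{\vee}$ has only $0$ as fixed vector; the same mechanism reduces Part 2 to rationality of $(V\oplus (\CC^n)^{\vee})/P'$.

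The core of the argument is the inductive descent along the filtration \eqref{fFiltration}, which is the substance of Proposition \ref{pInductionStepSL}: the $P'$-module $V$ (resp.\ $V\oplus (\CC^n)^{\vee}$) is processed from bottom to top, each successive layer being realized as the generic fibre of a $P'$-equivariant vector bundle and replaced by a trivial affine factor via no-name. The abelian unipotent radical $\CC^{n-1}$ of $P'$ acts trivially on each $Q_i$ and by translation between adjacent layers, so it is absorbed into these reductions; the role of Remark \ref{rSpan} is to guarantee that the layers do fit together as a tower of affine bundles. What remains at the end is a quotient of the irreducible $\mathrm{SL}_{n-1}(\CC )$-module $Q_l = \Sigma^{(\lambda_1, \dots , \lambda_{n-1})}\CC^{n-1}$ (with one extra copy of a standard representation in Part 2 to secure generic freeness for the descended group) by $\mathrm{SL}_{n-1}(\CC )$ or a central quotient thereof. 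The induction hypothesis in rank $n-1$ then yields stable rationality at level $n-1$, and the trivial $\CC$-factors collected during the peeling supply the one extra dimension needed to upgrade to level $n$.

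The main obstacle is the descent itself: at each stage one must verify that the peeled layer genuinely forms a $P'$-equivariant vector bundle, which requires a careful bookkeeping of the shifts induced by the unipotent radical on the completely reducible subquotients $Q_i$. A second obstacle arises in Part 2, where after descent the residual action is by a central quotient $\mathrm{SL}_{n-1}(\CC )/(\ZZ /m'\ZZ )$; converting the resulting twisted $\PP$-bundle over a rational base into an honest rational variety requires the stable rationality of a generic Severi--Brauer variety, which is the reason the theorem is not asserted over $\QQ$ and which is the technical heart of Proposition \ref{pInductionStepSL}.
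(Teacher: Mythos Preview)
Your high-level plan (induction on $n$, reduction to $V/P$ or $V/P'$) matches the paper, but the description of the induction step has a real gap and mislocates the Severi--Brauer argument.

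The gap is in the ``peel off layers via no-name'' step. No-name requires $P'$ to act generically freely on the base, but $P'$ never acts generically freely on $Q_l$: the unipotent radical $\CC^{n-1}$ acts trivially there. So you cannot strip the filtration all the way down to $Q_l/\mathrm{SL}_{n-1}(\CC)$ by repeated no-name. What the paper actually does in Proposition \ref{pInductionStepSL} is stop at the two-step quotient $V/V_{l-2}$, projectivize the fibres to form a Severi--Brauer scheme $S$ over $Q_l/\mathrm{SL}_{n-1}(\CC)$, and compare $S$ in the Brauer group with the reference scheme $S'=\PP(Q_{l-1}\oplus\CC^{n-1})/\mathrm{SL}_{n-1}(\CC)$; the induction hypothesis for $Q_l$ is invoked on $S'$. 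This is needed in Part 1 just as much as in Part 2: the top piece $Q_l$ is in general only an R-representation (generically free for a \emph{central quotient} of $\mathrm{SL}_{n-1}(\CC)$), so the twist by the center already appears there. The numerical conditions $\dim Q_{l-1}\ge 2(n-1)$ and $\dim Q_{l-2}\ge n-1$ (Hypothesis (H) in the paper) are what make this comparison work, and they must be checked; the few failures are handled by hand.

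More seriously, your sketch tacitly assumes the induction always lands in an R-representation $Q_l$. This is precisely what can fail. The argument in Proposition \ref{pInductionStepSL} only applies when at least one of $\mu_0=(\lambda_1,\dots,\lambda_{n-1})$ or $\mu_1=(\lambda_2,\dots,\lambda_n)$ gives an R-representation for $\mathrm{SL}_{n-1}(\CC)$ (one then chooses $P$ or $P'$ accordingly via Lemma \ref{lOppositeParabolic}); and it excludes exterior powers, whose filtration has length $2$ so that there is no $Q_{l-2}$ at all. When both $\mu_0$ and $\mu_1$ are E-representations, or when $V=\Lambda^k\CC^n$, the induction step breaks down; the finite list of such $V$ (Proposition \ref{pMainReduction}) constitutes the true ``induction base'', and Section \ref{sInductionBaseSL} disposes of each item individually. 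Your outline does not register this dichotomy. (Minor point: for $P'$ the top quotient is $\Sigma^{\mu_1}\CC^{n-1}$, not $\Sigma^{\mu_0}\CC^{n-1}$; the filtration for $P'$ is the reverse of that for $P$.)
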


To prove this it suffices of course to establish rationality of $V/P$ or $V/P'$, resp. $(V\oplus (\CC^n)^{\vee })/P$ or
$(V\oplus (\CC^n)^{\vee })/P'$.  This will be proven by induction, and the main work will go into establishing the \emph{base
of the induction}, which is done by a detailed analysis of the irreducible $\mathrm{SL}_n (\CC )$-representations with a
nontrivial stabilizer in general position for small values of $n$. The induction step is easier, and we do this in form of 
Proposition \ref{pInductionStepSL} below, the proof of which occupies the present section. First a convenient 

\begin{definition}\xlabel{dreRepresentations}
We call an $\mathrm{SL}_n (\CC )$-representation $V$ an \emph{R-representation} (R for \emph{regular}) if a central quotient
$\mathrm{SL}_n (\CC )/(\ZZ / m \ZZ )$ acts generically freely, and an \emph{E-representation} otherwise (E for
\emph{exceptional}). Thus an E-representation is one with a notrivial stabilizer in general position or $\CC$.
\end{definition}

\begin{proposition}\xlabel{pInductionStepSL}
Suppose Theorem \ref{tSLIrreducible} is true for any $n$ and any irreducible R-representation $V$ of
$\mathrm{SL}_n (\CC )$ in the set of $V$'s satisfying (A) or (B) below: 
\begin{itemize}
\item[(A)]
If $V=\Sigma^{\lambda } \CC^n$ is as above, $\lambda =
(\lambda_1 , \dots , \lambda_n )$, and 
\begin{gather*}
\mu_0 := (\lambda_1, \dots , \lambda_{n-1})\, ,\\
\mu_1 := (\lambda_2, \dots , \lambda_n)
\end{gather*}  
then $\Sigma^{\mu_0} \CC^{n-1}$ and $\Sigma^{\mu_1} \CC^{n-1}$  are both E-representations for $\mathrm{SL}_{n-1} (\CC )$, i.e.
are irreducible representations of $\mathrm{SL}_{n-1} (\CC )$ with a nontrivial stabilizer in general position.
\item[(B)]
$V$ is an exterior power $\Lambda^l \CC^n$. 
\end{itemize}
Then Theorem
\ref{tSLIrreducible} is true in general. 
\end{proposition}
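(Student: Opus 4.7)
The plan is to argue by induction on $n$, with a trivial base case. Let $V = \Sigma^{\lambda}\CC^n$ be an irreducible R-representation of $\mathrm{SL}_n(\CC)$ lying in neither (A) nor (B). By Lemma \ref{lOppositeParabolic} applied to $R = \CC^n$, the quotients $V/P$ and $V/P'$ are birationally equivalent, so I am free to work with whichever of the two parabolics is more convenient. The failure of (A) means that at least one of $\Sigma^{\mu_0}\CC^{n-1}$ or $\Sigma^{\mu_1}\CC^{n-1}$ is an R-representation of $\mathrm{SL}_{n-1}(\CC)$; after possibly exchanging $P$ and $P'$, I may assume the top quotient $W := Q_l = \Sigma^{\mu_0}\CC^{n-1}$ of the $P$-filtration on $V$ is R for $\mathrm{SL}_{n-1}(\CC)$. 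By the inductive hypothesis applied to $\mathrm{SL}_{n-1}(\CC)$ acting on $W$, the quotient $W/\mathrm{SL}_{n-1}(\CC)$ is stably rational of level $n-1$.

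The main step is to prove rationality of $V/P$. The $P$-equivariant linear projection $\pi \colon V \to W$ exhibits $V$ as an affine-space bundle over $W$ with fiber $V_{l-1}$. Since $W$ is R, the generic stabilizer in $P = \mathrm{SL}_{n-1}(\CC) \ltimes U$ (with $U = (\CC^{n-1})^\vee$) of a point $w \in W$ equals $Z \cdot U$, where $Z \subset \mathrm{SL}_{n-1}(\CC)$ is the finite central ineffectivity kernel on $W$. Applying the slice method fiberwise identifies $V/P$ birationally with a bundle over $W/\mathrm{SL}_{n-1}(\CC)$ whose generic fiber is $V_w/(Z \cdot U)$: the unipotent group $U$ acts on the affine fiber $V_w$ by affine transformations, producing after quotient an affine space of dimension $\dim V_{l-1} - (n-1)$, on which the finite $Z$ acts through scalars with rational quotient. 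Combining this with stable rationality of $W/\mathrm{SL}_{n-1}(\CC)$ at level $n-1$, provided the residual fiber dimension is at least $n$, yields rationality of $V/P$. The exclusion of case (B) enters here to rule out the exterior powers $\Lambda^k\CC^n$, whose short filtration ($l=2$) makes $V_{l-1}$ too thin for the combined bound to give rationality of the total space.

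The translation from rationality of $V/P$ to the statement of the theorem is immediate when $V$ is $\mathrm{SL}_n(\CC)$-generically free: the slice identification $(V \oplus \CC^n)/\mathrm{SL}_n(\CC) \sim V/P$ combined with the no-name lemma yields stable rationality of level $n$ for $V/\mathrm{SL}_n(\CC)$. When $V$ has a nontrivial central stabilizer $Z_n \subset \mathrm{SL}_n(\CC)$, the bundle $V/P \to V/\mathrm{SL}_n(\CC)$ is a non-trivial Severi-Brauer $\PP^{n-1}$-form; to kill its Brauer class I replace $V$ by $V \oplus (\CC^n)^\vee$, on which $\mathrm{SL}_n(\CC)$ now acts generically freely, and run the analogous slice-and-fiber analysis for $(V \oplus (\CC^n)^\vee)/P$. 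This is the Severi-Brauer method alluded to in the introduction, and verifying that this enlargement is compatible with the reduction performed above — so that the Brauer class indeed trivializes without disturbing the dimensional bookkeeping — is the principal technical obstacle.
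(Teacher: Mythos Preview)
Your approach has a genuine gap at the key step. You write that $V/P$ is birationally a bundle over $W/\mathrm{SL}_{n-1}(\CC)$ with rational generic fiber $V_w/(Z\cdot U)$, and then infer rationality of $V/P$ from rationality of base and fiber. But the bundle $V/P \to W/\mathrm{SL}_{n-1}(\CC)$ is \emph{not} Zariski locally trivial: the finite centre $Z\subset \mathrm{SL}_{n-1}(\CC)$ acts nontrivially on the fibre, and this produces a genuine Brauer obstruction over the base. Rational base plus rational fibre does not give a rational total space here. This is exactly the point the paper's argument is built to handle: it passes to the two-step quotient $V/V_{l-2}$, forms the associated Severi--Brauer scheme $S$ over $Q_l/\mathrm{SL}_{n-1}(\CC)$, and then \emph{compares} $S$ with the reference Severi--Brauer scheme $S'=\PP(Q_{l-1}\oplus \CC^{n-1})/\mathrm{SL}_{n-1}(\CC)$, which the inductive hypothesis (Theorem for $W$) shows is stably rational of level $n$. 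That comparison, not a naive fibre-by-fibre argument, is what kills the Brauer class. You invoke ``the Severi--Brauer method'' only afterwards, to pass from $V/P$ to the level-$n$ statement for $V/\mathrm{SL}_n(\CC)$; but the real Severi--Brauer difficulty is already inside the reduction from $P$ to $\mathrm{SL}_{n-1}(\CC)$.

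A second omission: the paper's argument needs the dimensional hypothesis $\dim Q_{l-1}\ge 2(n-1)$ and $\dim Q_{l-2}\ge n-1$ for the Severi--Brauer comparison and the subsequent vector-bundle splitting to work, and then runs a nontrivial classification of the cases where this fails (leading to the explicit exceptions $\Sigma^{(5,0,0)}\CC^3$, etc., handled by hand). Your proposal replaces this by the single line ``provided the residual fiber dimension is at least $n$'', which is both the wrong inequality and unverified. The exclusion of exterior powers (case (B)) is needed not because the fibre is ``too thin'' in a vague sense, but because for $\Lambda^k\CC^n$ one has $l=2$, so $Q_{l-2}$ does not exist and the whole filtration argument collapses.
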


\begin{proof}
It suffices to show the implication: 
\begin{quote}
($\sharp$) If $V=\Sigma^{\lambda } \CC^n$ is an irreducible R-representation which is not an exterior power 
such that $\mu_0$ \emph{or} $\mu_1$ also give an R-representation $W=\Sigma^{\mu_0} \CC^{n-1}$ \emph{or} $W=\Sigma^{\mu_1}
\CC^{n-1}$ for $\mathrm{SL}_{n-1} (\CC )$, then the assertion of Theorem \ref{tSLIrreducible} for $W$ implies the assertion of
Theorem \ref{tSLIrreducible} for
$V$.
\end{quote}
Informally, we
can reduce the dimension and use induction (note: Theorem \ref{tSLIrreducible} is trivially true for $\mathrm{SL}_1 (\CC ) =
(1)$), unless we reach a point where we must go from an R-representation to an E-representation.

\

To prove ($\sharp$) we will prove: under the hypotheses in ($\sharp$) the assertion of Theorem \ref{tSLIrreducible} for $W$
implies rationality for $V/P$ or $V/P'$. This implies clearly the assertion of Theorem \ref{tSLIrreducible} for $V$.\\
We will consider the case where $\mu_0$ gives an R-representation first.\\
Let $V$, $W$ be as previously; restrict $V$ to $P$ and write the associated
filtration as in formula
\ref{fFiltration}. Then
\[
Q_l = W = \Sigma^{\mu_0} \CC^{n-1}
\]
and this is an R-representation by assumption. Moreover $l$ is at least $3$ since we excluded exterior powers, so it makes
sense to talk about $Q_{l-1}$ and $Q_{l-2}$. We state:
\begin{quote}
\textbf{Hypothesis (H):} $\dim Q_{l-1} \ge 2(n-1)$ and $\dim Q_{l-2} \ge n-1$.
\end{quote}
If this hypothesis is true, which we assume for the moment, then $V/P$ is rational: we apply the Severi-Brauer method explained
in the proof of
\cite{BBB10}, Proposition 4.4. By this method, the function field of $V/P$ arises from the function field $K$ of the
Severi-Brauer scheme $S$  associated to the two-step extension
\[
0 \to Q_{l-1} \to V/V_{l-2} \to Q_l \to 0
\] 
obtained by dividing out homotheties in the fibres of the vector bundle $(V/V_{l-2})/U \to Q_l$, i.e.
forming the associated projective bundle, and then dividing by the $\mathrm{SL}_{n-1} (\CC )$-action. In fact, $V/P$ is a vector
bundle over
$(V/V_{l-2})/P$ which in turn is a Zariski locally trivial $\CC^{\ast }$-bundle over $S$. Thus the function field of $V/P$
arises by adjoining $\ge 1 + (n-1) =n$ indeterminates to the function field $K$ of $S$ by the inequality on the dimension of
$Q_{l-2}$. But if we consider
$S'$, the Severi-Brauer scheme 
$\PP ( Q_{l-1}
\oplus
\CC^{n-1}) /\mathrm{SL}_{n-1} (\CC )$ over the base
$Q_{l-1}/\mathrm{SL}_{n-1} (\CC )$, then $S$ and $S'$ are stably equivalent, so by the assumption on the dimension of
$Q_{l-1}$, $S$ is generically a vector bundle over some $\PP^{n-2}$-bundle which is in the same subgroup as $S'$. But the
assertion of Theorem \ref{tSLIrreducible} which we assumed for $W$, implies stable rationality of level $n=(n-1)+1$ for $S'$ and
hence also for the afore-mentioned $\PP^{n-2}$-bundle in the same subgroup of the Brauer group of the base, since the two are
stably isomorphic of level $n-2$. Hence $V/P$ is rational. This implies the assertion of Theorem \ref{tSLIrreducible} for $V$.

\

If $\mu_1$ gives an R-representation instead of $\mu_0$ we may argue in the same way as above, replacing $P$ by $P'$ and
$\mu_0$ by $\mu_1$ throughout, assuming Hypothesis (H) now for the filtration with respect to $P'$.

\

What remains to be done is to analyze in which cases Hypothesis (H) may fail (for $P$ or $P'$). Let us put $m=n-1$. Suppose
an irreducible representation of
$\mathrm{SL}_{m} (\CC )$ has dimension $< 2m$. Since $\dim \mathrm{SL}_{m} (\CC ) = m^2 -1 \ge 2m$, as soon as $n\ge 3$, such a
representation is an E-representation. 
Looking at the table of \cite{Po-Vi}, we find that the only possibilities are:
\begin{itemize}
\item[(1)]
One of $\CC$, $\CC^2$, $\mathrm{Sym}^2 \CC^2$, $\mathrm{Sym}^3 \CC^2$, $\mathrm{Sym}^4 \CC^2$ for $\mathrm{SL}_2 (\CC )$.
\item[(2)]
$\CC$, $\CC^m$ or the dual $(\CC^m)^{\vee }$ for all $m\ge 3$. 
\item[(3)]
$\Lambda^2 \CC^4$ for $\mathrm{SL}_4 (\CC )$. 
\end{itemize}
Moreover, the irreducible $\mathrm{SL}_m (\CC )$-representations of dimension $< m$ are then just $\CC$ for all $m$ and in
addition $\CC^2$ for $\mathrm{SL}_2 (\CC )$.\\
Now note that $Q_l$, being irreducible, must be contained in one of the representations listed in (1), (2), (3) above if we
tensor these with $\CC^m$ (in the case where we argue with $P$) or with $(\CC^m)^{\vee }$ (in the case where we argue with
$P'$) and must be an R-representation. This excludes the possibilities of
$\CC$,
$\CC^m$ or
$(\CC^m)^{\vee}$ at once, as well as $\mathrm{Sym}^2 \CC^2$, $\mathrm{Sym}^3 \CC^2$. It is true that 
\[
\mathrm{Sym}^5 \CC^2 \subset \mathrm{Sym}^4 \CC^2 \otimes \CC^2
\]
and
\[
\Sigma^{2,1} \CC^4 \subset \Lambda^2 \CC^4 \otimes \CC^4, \quad \Sigma^{2,1} (\CC^4)^{\vee } \subset \Lambda^2 (\CC^4)
\otimes
(\CC^4)^{\vee }
\]
are the only R-representations occurring. Thus Hypothesis (H) may fail \emph{a priori} when 
\begin{itemize}
\item[(a)]
restricting $V=\Sigma^{(5+j , j, 0)} (\CC^3)$, $j\ge 0$, to $P$. 
\item[(b)]
restricting $V=\Sigma^{(2+j, 1+j, j ,j, 0)} (\CC^5)$, $j\ge 0$, to $P$.
\item[(a')]
restricting $V=\Sigma^{(k, 5, 0)} (\CC^3)$, $k\ge 5$, to $P'$. 
\item[(b')]
or restricting $V=\Sigma^{(k, 2,2,1,0)} (\CC^5)$, $k\ge 2$, to $P'$.
\end{itemize}
An analysis of the above four cases shows that Hypothesis (H) actually only fails in the following situation:
If we restrict $\Sigma^{(5, 0, 0)} \CC^3$ to $P$ or $\Sigma^{(5,5,0)} \CC^3$ to $P'$. By duality, it suffices to treat the first
case, we will prove directly that 
\[
\Sigma^{(5,0,0 )} \CC^3 /P \; \mathrm{is} \; \mathrm{rational.}
\]
By Lemma \ref{lOppositeParabolic} it suffices to prove that $\Sigma^{(5,0,0)}\CC^3 /P'$ is rational. But 
\[
\mathrm{Sym}^5 \CC^2\oplus \mathrm{Sym}^4 \CC^2 \oplus \mathrm{Sym}^3 \CC^2 \oplus \mathrm{Sym}^2 \CC^2 \oplus \CC
\]
is a $(P', \: \mathrm{SL}_2 (\CC ))$-section in this space whence the assertion.

\

To conclude the proof it suffices to remark that, generally, $Q_{l-2}$ cannot be composed of
summands $\CC$ because this would mean again that
$Q_l$ is an E-representation. The representation $\CC^2$ is to be excluded for a similar reason for $\CC^2\otimes
\mathrm{Sym}^2 \CC^2$ contains only E-representations. Thus Hypothesis (H) cannot fail because the inequality on the dimension
of $Q_{l-2}$ fails. Thus Proposition \ref{pInductionStepSL} is proven.
\end{proof}

The first question left unanswered by Proposition \ref{pInductionStepSL} is what to do with exterior powers. This is addressed
in:

\begin{proposition}\xlabel{pExteriorPowers}
Theorem \ref{tSLIrreducible} is true for all exterior powers $V=\Lambda^i \CC^n$, all $n$, if it is true for the special
exterior powers
\[
\Lambda^3 \CC^{10}, \Lambda^3 (\CC^{10})^{\vee }, \Lambda^4 \CC^9, \: \Lambda^4 (\CC^9)^{\vee }\, .
\]
(We emphasize that the statement refers to exterior powers $\Lambda^i \CC^n$ which are R-representations).
\end{proposition}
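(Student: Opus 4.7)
The plan is to prove the statement by induction on $n$, reducing an R-exterior power $V=\Lambda^i\CC^n$ of $\mathrm{SL}_n(\CC)$ to a smaller R-exterior power by the same parabolic-section idea used in Proposition \ref{pInductionStepSL}. The crucial simplification is that for exterior powers the filtration (\ref{fFiltration}) has length only $l=2$, so the Severi--Brauer method of \cite{BBB10} does not enter; $V/P$ (respectively $V/P'$) can be computed directly as a vector bundle. By Lemma \ref{lOppositeParabolic} together with $\Lambda^i\CC^n\simeq(\Lambda^{n-i}\CC^n)^\vee$, it suffices to consider $i\le n/2$: the pairs $(7,10)$ and $(5,9)$ in the statement then follow from $(3,10)$ and $(4,9)$ by duality.

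For the induction step, restrict $V=\Lambda^i\CC^n$ to $P=\mathrm{SL}_{n-1}(\CC)\ltimes(\CC^{n-1})^\vee$. The socle filtration has $Q_1=\Lambda^{i-1}\CC^{n-1}$ (the $U$-invariant sub) and $Q_2=\Lambda^i\CC^{n-1}$ (the quotient, on which $U$ acts trivially). After an $\mathrm{SL}_{n-1}$-equivariant splitting $V=Q_1\oplus Q_2$, a direct calculation shows that $u\in U\simeq(\CC^{n-1})^\vee$ acts as $(w_1,w_2)\mapsto(w_1+\iota_u w_2,\,w_2)$, where $\iota_u$ is the contraction. For $3\le i\le n-2$ and generic $w_2$, the map $u\mapsto\iota_u w_2$ is injective, so $V/U$ is birationally an $\mathrm{SL}_{n-1}$-equivariant vector bundle of rank $r=\binom{n-1}{i-1}-(n-1)$ over $Q_2$. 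If $Q_2$ is itself an R-representation, the no-name lemma \cite{Bo-Ka} combined with Theorem \ref{tSLIrreducible} for $Q_2$ (the induction hypothesis) shows that $V/P=(V/U)/\mathrm{SL}_{n-1}$ is birationally a rank-$r$ vector bundle over a base that is stably rational of level $n-1$. The easy inequality $r\ge n-1$ (which holds for all $i\ge 3$ and $n\ge 6$) then makes $V/P$ rational, yielding Theorem \ref{tSLIrreducible} for $V$. An entirely parallel argument with $P'$ in place of $P$ reduces $(i,n)$ to $(i-1,n-1)$ whenever $\Lambda^{i-1}\CC^{n-1}$ is R.

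The induction terminates exactly at those R-pairs $(i,n)$ with $i\le n/2$ for which both children $\Lambda^i\CC^{n-1}$ and $\Lambda^{i-1}\CC^{n-1}$ are E. Reading off the list of E-exterior powers from \cite{Po-Vi}---these are $\Lambda^1$, $\Lambda^2$, $\Lambda^3\CC^m$ for $m\le 9$, and $\Lambda^4\CC^8$, together with their duals---leaves precisely the two minimal R-pairs $(i,n)=(3,10)$ (children $(3,9)$ and $(2,9)$, both E) and $(i,n)=(4,9)$ (children $(4,8)$ and $(3,8)$, both E). Together with their duals $(7,10)$ and $(5,9)$ via Lemma \ref{lOppositeParabolic}, these are the four cases in the statement.

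I expect the main technical obstacle to be the bookkeeping needed when the child $Q_2$ (or the corresponding child for $P'$) is R but not itself $\mathrm{SL}_{n-1}$-generically free, that is, when $\gcd(i,n-1)>1$. Then the induction hypothesis yields stable rationality of level $n-1$ only for the modified quotient $(Q_2\oplus(\CC^{n-1})^\vee)/\mathrm{SL}_{n-1}$, and one must absorb the extra $(\CC^{n-1})^\vee$ summand into the vector-bundle fibre before quotienting by $\mathrm{SL}_{n-1}$. Because the rank $r$ always exceeds $n-1$ by a wide margin in the relevant range, the extra $n-1$ dimensions can be accommodated without violating the rank inequality, and the reduction still goes through; this verification is routine and does not change the logical structure of the induction.
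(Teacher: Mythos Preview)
Your overall induction scheme is the same as the paper's, and the identification of the four terminal cases is correct. However, the claim that ``the Severi--Brauer method does not enter'' because $l=2$ is wrong, and this is exactly where your argument breaks down. The no-name lemma requires the group to act generically freely on the base, but when the child $Q_2=\Lambda^i\CC^{n-1}$ is R with $\gcd(i,n-1)>1$, the ineffectivity kernel $K\subset\mathrm{SL}_{n-1}$ is a nontrivial central subgroup acting trivially on $Q_2$ and by a nontrivial scalar on the fibre $Q_1/\iota_U(w_2)$ (since $K$ acts on $\Lambda^{i-1}\CC^{n-1}$ via $\zeta\mapsto\zeta^{i-1}=\zeta^{-1}$). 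Hence $(V/U)/\mathrm{SL}_{n-1}$ is not birationally a vector bundle over $Q_2/\mathrm{SL}_{n-1}$; the fibres are cones $\CC^r/K$, and the obstruction to linearising them is precisely a Brauer class.

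Your proposed fix, ``absorbing the extra $(\CC^{n-1})^\vee$ summand into the fibre'', does not work as stated. There is no $\mathrm{SL}_{n-1}$-equivariant copy of $(\CC^{n-1})^\vee$ sitting inside the fibre bundle $V/U\to Q_2$: the only natural equivariant map $Q_2\otimes(\CC^{n-1})^\vee\to Q_1$ is the contraction $\iota$, and that is exactly what has been killed when passing to $V/U$. If instead you add $(\CC^{n-1})^\vee$ externally and run your argument, you obtain only that $(V/P)\times\CC^{n-1}$ is rational, which does not imply $V/P$ is rational. A concrete case where neither R child is generically free is $(i,n)=(5,11)$: both $\Lambda^5\CC^{10}$ and $\Lambda^4\CC^{10}$ are R, but $\gcd(5,10)=5$ and $\gcd(4,10)=2$. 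The paper handles this uniformly by projectivising the fibre (so $K$ acts trivially), obtaining a Severi--Brauer scheme, and comparing its Brauer class with the reference scheme $\PP(Q_1\oplus\CC^{n-1})/\mathrm{SL}_{n-1}$; the required dimension bound becomes $\dim Q_1\ge 3(n-1)$, which holds for $n\ge 8$, and R-exterior powers only exist for $n\ge 9$.
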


\begin{proof}
If an exterior power $V= \Sigma^{(1,1,\dots , 1, 0, \dots , 0)} \CC^n = \Lambda^k \CC^n$ is an R-representation then certainly
$3\le k \le n-3$. It implies $n\ge 6$. In addition to the ones already excluded, the following exterior powers are
E-representations, according to the table in \cite{Po-Vi}:
\begin{gather*}
\Lambda^3 \CC^n, \: n=6,7,8,9, \quad  \Lambda^4 \CC^8 \quad \mathrm{or}\: \mathrm{one}\: \mathrm{of} \: \mathrm{the}\:
\mathrm{duals.}
\end{gather*}
It suffices again to prove that $V/P$ or $V/P'$ is rational. For $P$ we have for the quotients in the filtration
\[
Q_1 = \Lambda^{k-1} \CC^{n-1}, \; Q_2 = \Lambda^{k} \CC^{n-1}
\]
and for $P'$ one has $Q_1$ and $Q_2$ interchanged. Thus restricting to either $P$ or $P'$, one can always find an
R-representation as completely reducible quotient unless 
\[
V= \Lambda^3 \CC^{10}, \Lambda^4 \CC^9 \quad \mathrm{or}\: \mathrm{one}\: \mathrm{of} \: \mathrm{the}\:
\mathrm{duals.}
\]
(recall that $V$ itself is an R-representation). We exclude these cases for the moment. Then we may again use the Severi-Brauer
method already exemplified in the proof of Proposition \ref{pInductionStepSL} provided the fibre has dimension $\ge 3(n-1)$.
Putting $m=n-1$ we see that 
\[
\dim \Lambda^2 \CC^m = \frac{m(m-1)}{2} \ge 3m
\]
if $m\ge 7$, so that for $n\ge 8$ there is no problem with the induction. Since $3\le k \le n -3$, we find that for $n=6, 7$
there are no exterior powers which are R-representations. In fact, $n$ must be at least $9$ for that. Hence Proposition
\ref{pExteriorPowers} is proven.
\end{proof}

We may combine Propositions \ref{pInductionStepSL} and \ref{pExteriorPowers} to obtain:

\begin{proposition}\xlabel{pMainReduction}
Theorem \ref{tSLIrreducible} holds in general if it holds for representations $V$ in the following list:
\begin{gather*}
\Lambda^3 \CC^{10}, \; \Lambda^4 \CC^9 \quad \mathrm{or}\: \mathrm{one}\: \mathrm{of} \: \mathrm{the}\:
\mathrm{duals} 
\end{gather*}
\emph{or} $V=\Sigma^{\lambda} \CC^3$ and $\lambda$ is one of
\begin{gather*}
(a, 4, 0), \: a=4,5,6,7,8, \\
(b, 3, 0), \: b=4,5,6,7, \\
(c, 2, 0), \: c=3,4,5,6, \\
(d, 1, 0), \: d=3,4,5,\\
(4,0,0).
\end{gather*}
\end{proposition}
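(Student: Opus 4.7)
The plan is to combine the two preceding reductions with an explicit enumeration. By Proposition \ref{pInductionStepSL} it suffices to establish Theorem \ref{tSLIrreducible} for irreducible R-representations $V$ satisfying either condition (A) or condition (B), and by Proposition \ref{pExteriorPowers} the (B)-cases collapse to the four exterior powers $\Lambda^3\CC^{10},\Lambda^3(\CC^{10})^{\vee},\Lambda^4\CC^9,\Lambda^4(\CC^9)^{\vee}$. The remaining work is therefore to identify the non-exterior-power (A)-representations, and to show that the only ones that must be treated as genuine base cases are the $\Sigma^\lambda\CC^3$ on the list.

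The main input is the Popov--Vinberg classification of irreducible representations of $\mathrm{SL}_m(\CC )$ with nontrivial stabilizer in general position, restricted to the pair $(\mu_0,\mu_1)$. For $m=n-1\ge 3$, the E-list is short: up to duality one has the trivial, standard, $\mathrm{Sym}^2$, adjoint, $\Lambda^2\CC^4$, and a handful of exceptional $\Lambda^3\CC^m$ ($m=6,7,8,9$), $\mathrm{Sym}^3\CC^3$ and $\Lambda^4\CC^8$ representations. I would walk through each admissible pair $(\mu_0,\mu_1)$ and show that the resulting $\lambda$ either forces $V=\Sigma^\lambda\CC^n$ to be one of the E-representations of $\mathrm{SL}_n(\CC)$ (so $V$ is not R and does not appear in our class), or forces $V$ to be an exterior power (hence already covered by the (B) reduction).

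For $n=3$ the situation is more delicate because the E-representations of $\mathrm{SL}_2(\CC )$ are $\mathrm{Sym}^k \CC^2$ for $k\in\{0,1,2,3,4\}$, a longer list. Condition (A) then reads simply $\lambda_1-\lambda_2\le 4$ and $\lambda_2-\lambda_3\le 4$. After the usual normalization $\lambda_3=0$, only finitely many weights survive, and I would strike out those $\Sigma^\lambda\CC^3$ that are themselves E-representations of $\mathrm{SL}_3(\CC )$, namely $\CC$, $\CC^3$, $(\CC^3)^\vee$, $\mathrm{Sym}^2\CC^3$, $\mathrm{Sym}^2(\CC^3)^\vee$, $\mathrm{Sym}^3\CC^3$, $\mathrm{Sym}^3(\CC^3)^\vee$, and the adjoint with weight $(2,1,0)$. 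The surviving weights are precisely the five families $(a,4,0)$, $(b,3,0)$, $(c,2,0)$, $(d,1,0)$, $(4,0,0)$ with the stated ranges.

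The main obstacle is the case analysis for $n\ge 4$: one has to verify that no R-representation $V$ with both $\mu_0,\mu_1$ on the short Popov--Vinberg lists escapes the exterior-power reduction. Because the E-lists of $\mathrm{SL}_{n-1}(\CC )$ shrink rapidly with $n$, the constraints on $\lambda$ essentially force $\lambda$ to have at most three nonzero parts with tight gap conditions, and checking each resulting family against the R/E dichotomy for $\mathrm{SL}_n$ is bookkeeping but straightforward. Once this enumeration is complete, the combination with Propositions \ref{pInductionStepSL} and \ref{pExteriorPowers} yields the asserted reduction.
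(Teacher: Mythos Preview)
Your $n=3$ analysis is correct and matches the paper exactly. The gap is in your treatment of $n\ge 4$: your claim that every R-representation $V=\Sigma^\lambda\CC^n$ with both $\mu_0$ and $\mu_1$ on the Popov--Vinberg E-lists must itself be either an E-representation of $\mathrm{SL}_n(\CC)$ or an exterior power is \emph{false}. Concrete counterexamples already appear for every $n\ge 4$: e.g.\ $\lambda=(3,1,\dots,1,0)$ gives $\mu_0\sim\mathrm{Sym}^2\CC^{n-1}$ and $\mu_1\sim(\CC^{n-1})^\vee$, both E, yet $\Sigma^{3,1,\dots,1,0}\CC^n$ is an R-representation and not an exterior power. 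The paper in fact finds five such infinite families $(3,2,\dots,2,0)$, $(3,1,\dots,1,0)$, $(4,2,\dots,2,0)$, $(2,1,\dots,1,0,0)$, $(2,2,1,\dots,1,0)$ (plus duals), together with several sporadic cases for small $n$ such as $\Sigma^{3,2,1,0}\CC^4$, $\Sigma^{3,3,0,0}\CC^4$, $\Sigma^{3,3,3,0}\CC^4$, $\Sigma^{4,3,3,0}\CC^4$, $\Sigma^{5,3,3,0}\CC^4$, $\Sigma^{6,3,3,0}\CC^4$, and $\Sigma^{2,2,0,0,0}\CC^5$.

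These cases cannot be disposed of by the R/E dichotomy; the paper handles each of them by a \emph{direct rationality argument} for $V/P$ or $V/P'$ (exhibiting a generically free $P$- or $P'$-quotient with fibre of dimension at least $\dim P$, or passing to a section and reducing to a smaller group). This is the bulk of the proof of the proposition and the step your proposal is missing. Without it, the enumeration you describe does not reduce the problem to the list in the statement.
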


\begin{proof}
It remains to analyze which R-representations $V= \Sigma^{\lambda } \CC^n$ are such that both
\[
\mu_0 = (\lambda_1, \dots , \lambda_{n-1}), \quad \mu_1 = (\lambda_2, \dots , \lambda_n)
\]
give E-representations. We start by considering the E-representations which are there for all $n$:
\[
\CC , \: \CC^n, \: (\CC^n)^{\vee}, \: \mathrm{Sym}^2 (\CC^n), \: \mathrm{Sym}^2 (\CC^n)^{\vee }, \: \Lambda^2 (\CC^n), \:
(\Lambda^2 \CC^n)^{\vee }, \: (\mathrm{Ad} \CC^n)_0\, .
\]
They correspond to weights which may be written:
\begin{gather*}
(0,\dots , 0), \: (1, 0, \dots , 0) , \: (1,\dots , 1, 0), \: (2, 0,\dots , 0), \: (2, \dots, 2, 0), \\
(1,1,0,\dots , 0), \: (1, \dots , 1, 0, 0), \: (2,1,\dots , 1, 0) .
\end{gather*}
Suppose $\lambda$ is such that $\mu_0$ gives one of these representations. Then $\lambda$ can be written
\begin{gather*}
(j,\dots ,j, 0), \: (1+j, j, \dots ,j, 0) , \: (1+j,\dots , 1+j, j, 0), \: (2+j, j,\dots ,j, 0),\\ \: (2+j, \dots, 2+j, j, 0),\:
 (1+j,1+j,j,\dots ,j, 0), \: (1+j, \dots , 1+j, j, j,0),\\ \: (2+j,1+j,\dots , 1+j,j, 0) .
\end{gather*}
Suppose that $\mu_1$ also gives one of these representations. Then $j$ must be either $1$, $2$ or $0$. Under the hypothesis
that $V$ is an R-representations we conclude that the only exceptions for $n\ge 11$ giving both an exceptional $\mu_0$ and
$\mu_1$ are the representations associated to:
\begin{eqnarray}
(3,2,\dots , 2, 0),\\
(3,1,\dots , 1, 0), \\
(4,2,\dots , 2, 0),\\
(2,1,\dots , 1, 0, 0), \\
(2,2,1,\dots , 1, 0).
\end{eqnarray}
Here (2) and (3) are dual, (5) and (6) are dual, and (4) is self-dual. We prove rationality of (3), (4), (5) modulo $P'$, the
proof of the other cases being analogous using Lemma \ref{lOppositeParabolic}. The proof for (4) is easy because here $V/V_1$
is generically free for $P'$, and $V_1$ has dimension larger than the dimension of the affine group $P'$ which is special.\\ The
decomposition of (3) has (putting $m=n-1$)
\begin{gather*}
Q_1= \mathrm{Sym}^2 \CC^m, \: Q_2 = \CC^m + \Sigma^{3,1,\dots , 1, 0} \CC^m , \: Q_3 = \CC + (\mathrm{Ad}\CC^m)_0 \\
Q_4 = (\CC^m)^{\vee }\, .
\end{gather*}
Note that this $V$ has a $P'$-subrepresentation (of dimension larger than $P'$) with semisimplification $\mathrm{Sym}^2 \CC^m +
\Sigma^{3,1,\dots , 1, 0} \CC^m$, and the quotient representation is the restriction of $(\mathrm{Ad} \CC^n)_0$ to $P'$, which
is generically $P'$-free, since $(\mathrm{Ad} \CC^n)_0 +\CC^n$ is generically $\mathrm{SL}_n (\CC )$-free. We conclude as
in the previous case.\\
The decomposition for (5) is 
\begin{gather*}
Q_1= (\mathrm{Ad}\CC^m)_0, \: Q_2 = (\CC^m)^{\vee } + \Sigma^{2,1,\dots , 1, 0, 0}\CC^m, \: Q_3= \Lambda^2 (\CC^m)^{\vee }\, .
\end{gather*}
This has a subrepresentation with semisimplification $(\mathrm{Ad}\CC^m)_0 + (\CC^m)^{\vee }$ which has dimension exactly the
dimension of $P'$. The quotient representation is generically free for $P'$, and we conclude as in the previous cases.

\

Thus we conclude that Theorem \ref{tSLIrreducible} is true once it is true for the exceptions listed in Proposition
\ref{pExteriorPowers} and those R-representations $\Sigma^{\lambda }\CC^n$ with $\mu_0$ and $\mu_1$ both giving
E-representations \emph{and} 
\[
n \le 10\, .
\]
Except for the eight exceptional representations already listed above, there are just the following additional ones for $m
=n-1\le 9$ according to the table in \cite{Po-Vi}:
\begin{gather*}
\Lambda^3 \CC^m, \: m=6,7,8,9, \quad  \Lambda^4 \CC^8 ,\quad  \Sigma^{2,2} \CC^4 ( \simeq S^2_0 \mathrm{SO}_6)\\
\mathrm{Sym}^3 \CC^2 , \; \mathrm{Sym}^4 \CC^2, \; \mathrm{Sym}^3 \CC^3\\
\quad \mathrm{or}\: \mathrm{one}\: \mathrm{of} \: \mathrm{the}\:
\mathrm{duals.}
\end{gather*}
We first analyze when for $n=4$ a representation has $\mu_0$ or $\mu_1$ one of $\mathrm{Sym}^3 \CC^3$ or
$\mathrm{Sym}^3( \CC^3)^{\vee }$. This can happen only for
\begin{gather*}
(3,3,3,0), \quad (4,3,3,0),\quad (6, 3, 3, 0),  \quad (5,3,3,0), \quad (3, 3, 0, 0)
\end{gather*}
or one of the duals. In the first case (the one of $(3,3,3,0)$) we can reduce to a proof of rationality for
\[
\mathrm{Sym}^3 (\CC^3)^{\vee} + \mathrm{Sym}^2 (\CC^3)^{\vee} + \CC
\]
hence to a question of rationality for $\mathrm{O}_3 (\CC ) =\ZZ /2\ZZ \times \mathrm{PSL}_2 (\CC )$ which always has a positive
answer.\\
In the cases of $(4,3,3,0)$, $(6, 3, 3, 0)$ and $(5,3,3,0)$ we get -restricting to $P'$- a $P'$-quotient representation of the given representation
which is isomorphic to the restriction to $P'$ of the $\mathrm{SL}_4 (\CC )$-representation $\mathrm{Sym}^3 (\CC^4)^{\vee }$.
Hence this $P'$-quotient is generically free for $P'$, and the fibre dimension is larger than that of $P'$ ($=11$) in all the cases
(the fibre contains $V(1,3)$, $V(3, 3)$, resp. $V(2,3)$).

\

In the case $(3, 3, 0, 0)$ we get -restricting to $P$- a filtration with
\begin{gather*}
Q_1= \mathrm{Sym}^{3}\CC^3, \quad Q_2= \Sigma^{3,1,0}\CC^3, \quad Q_3= \Sigma^{3,2,0}\CC^3,\quad  Q_4= \mathrm{Sym}^{3}(\CC^3 )^{\vee }
\end{gather*}
It will be sufficient to show that the quotient $P$-module with filtration steps $Q_3$ and $Q_4$ is generically free. The following $2$-dimensional subspace gives a section for the action of $\mathrm{SL}_3 (\CC )$ on $\mathrm{Sym}^3 (\CC^3)^{\vee }$ (the \emph{Hesse pencil})
\[
a(x_0^3 + x_1^3 + x_2^3) + b x_0x_1x_2 =0 \, ,
\]
and the stabilizer in general position for $\mathrm{Sym}^{3}(\CC^3 )^{\vee }$ in $\mathrm{SL}_3 (\CC )$ is conjugate to the subgroup $H$ which consists of monomial matrices in $\mathrm{SL}_3 (\CC )$ whose nonzero entries are cube roots of unity. $H$ is an extension of the group $\ZZ/3\ZZ$ of cyclic permutations of the coordinates $x_0, \: x_1, \: x_2$ by the group $( \ZZ /3\ZZ )^2$ of diagonal matrices with entries cube roots of unity. We have to see how the stabilizer subgroup $H \ltimes \CC^3$ of a general point $q\in V(0, 3)$ acts on $Q_3 \oplus (\CC\cdot q)$: it will be sufficient to know the weight space decomposition of $Q_3$.  In terms of the standard coordinate functions $\epsilon_1$, $\epsilon_2$, $\epsilon_3$ on the toral Lie algebra $\mathfrak{t}$, $\epsilon_i (\mathrm{diag}(t_1, t_2 , t_3)) = t_i$, one has that the weights of $V(1, 2)$ are (cf. \cite{Fu-Ha}, p. 180 ff.) 
\begin{gather*}
-\epsilon_i -\epsilon_j +\epsilon_k ,\; \{ i,j,k\} = \{1,2,3\}  \quad    -2 \epsilon_i + \epsilon_j , \; i\neq j  \quad ( \mathrm{multiplicity} \; 1 )\\
 \epsilon_i  \quad ( \mathrm{multiplicity} \; 2 )\, .
\end{gather*}
To conclude the proof that the $P$-module with filtration $Q_3$, $Q_4$ is generically free, it remains to note that $H$ acts generically freely on 
\[
\bigoplus V_{-\epsilon_i -\epsilon_j +\epsilon_k}  + \bigoplus V_{ -2 \epsilon_i + \epsilon_j}\, .
\]

\

A final peculiarity of the case $n=4$ is that we can have a representation such that both $\mu_0$ and $\mu_1$ give the adjoint representation of $\mathrm{SL}_3 (\CC )$; this happens for
\[
(3, 2, 1, 0)\, .
\]
Here the filtration with respect to $P$ has (we just write the exponent $(a, b, c)$ in place of $\Sigma^{a,b,c} \CC^3$ to save space)
\begin{gather*}
Q_1= (2,1,0), \quad Q_2= (1,0,0) + (2,2,0) + (3,1,0), \\ Q_3= (1,1,0)+(2,0,0)+ (3,2,0), \quad Q_4 = (2,1,0)\, .
\end{gather*}
Here again we show that $Q_3$ and $Q_4$ give a quotient $P$-module which is generically free. A general traceless matrix in $Q_4 = V(1, 1)$ is diagonalizable with pairwise distinct diagonal entries, and the stabilizer in general position in $\mathrm{SL}_3 (\CC )$ of $V(1, 1)$ is hence conjugate to the maximal torus $T$ of diagonal matrices. To see how $T\ltimes \CC^3$ acts on $Q_3 \oplus \CC\cdot m$ for a general $m\in V(1, 1)$, we remark that the weight space decomposition of $Q_3$ again contains that of $\Sigma^{3, 2, 0} \CC^3$ already computed above. Thus we can conclude the argument by noting that $T$ acts generically freely on the same sum of weight spaces as in the preceding case.

\

We will later treat the case $n=3$ separately. Note that now for $4\le n \le 10$ we can write those $\lambda$ giving exceptional
$\mu_0$ and $\mu_1$ as 
\[
\lambda = (\lambda_1, \dots , \lambda_{n-1}, 0)
\]
with $0 \le \lambda_{n-1} \le 2$ and $0 \le \lambda_1 \le 4$. This allows us to quickly classify the remaining exceptions. Note that for
$4\le n$ the exceptions (2), (3), (4), (5), (6) above of course reoccur. The proofs of their rationality remains the same.
Therefore we only list the additional exceptions: 
\begin{gather}
\Lambda^3 \CC^{10}, \Lambda^4 \CC^9 \quad \mathrm{or}\: \mathrm{one}\: \mathrm{of} \: \mathrm{the}\:
\mathrm{duals}
\end{gather}
(those exceptions we already know) \emph{and}
\[
\Sigma^{2,2,0,0,0} \CC^5 \mathrm{or}\: \mathrm{the}\:
\mathrm{dual}\, .
\]
Now the decomposition in the last case has (we restrict to $P'$):
\[
Q_1 = \Sigma^{2,2} \CC^4,\: Q_2 = \Sigma^{2,1} \CC^4, \: Q_3 = \mathrm{Sym}^2 \CC^4\, .
\] 
Thus this representation has a $P'$-generically free quotient ($Q_2 \oplus Q_3$) and $\dim Q_1 = 20$ is bigger than the
dimension of $P'$.\\
Let us consider now the case $n=3$; here there are naturally quite a few: 
\begin{gather}
(a, 4, 0), \: a=4,5,6,7,8, \\
(b, 3, 0), \: b=4,5,6,7, \\
(c, 2, 0), \: c=3,4,5,6, \\
(d, 1, 0), \: d=3,4,5,\\
(4,0,0).
\end{gather}
This proves Proposition \ref{pMainReduction}.
\end{proof}

\section{The base for the induction for $\mathrm{SL}_n (\CC )$}\xlabel{sInductionBaseSL}

The purpose of this section is to prove rationality of $V/P$ (or $V/P'$ which is equivalent by Lemma \ref{lOppositeParabolic})
for the representations $V$ listed in Proposition \ref{pMainReduction}, thus completing the proof of Theorem
\ref{tSLIrreducible}. The easier part is

\begin{proposition}\xlabel{pInductionBaseSL2}
If $V=\Sigma^{\lambda} \CC^3$ and $\lambda$ is one of
\begin{gather*}
(a, 4, 0), \: a=4,5,6,7,8, \\
(b, 3, 0), \: b=4,5,6,7, \\
(c, 2, 0), \: c=3,4,5,6, \\
(d, 1, 0), \: d=3,4,5,\\
(4,0,0),
\end{gather*}
then $V/P$ (resp. $V/P'$) is rational.
\end{proposition}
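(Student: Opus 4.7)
The plan is to mirror the argument used for $\Sigma^{(5,0,0)}\CC^3/P'$ at the end of the proof of Proposition \ref{pInductionStepSL}. By Lemma \ref{lOppositeParabolic}, $V/P$ and $V/P'$ are birational, so I may work with whichever is convenient; I take $P'\cong \mathrm{SL}_2(\CC)\ltimes \CC^2$, the stabilizer of $e_3^{\vee}$, whose unipotent radical $U$ has Lie algebra $\mathfrak{u}\cong \mathrm{Sym}^1\CC^2$ as $\mathrm{SL}_2(\CC)$-module. The strategy has three steps: (i) decompose $V$ as $\mathrm{SL}_2(\CC)$-module; (ii) build a $(P',\mathrm{SL}_2(\CC))$-section $\Sigma\subset V$; (iii) show $\Sigma/\mathrm{SL}_2(\CC)$ is rational.

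For (i), the classical branching law gives
\[
V\;=\;\Sigma^{\lambda}\CC^3\;\cong\;\bigoplus_{\mu}\mathrm{Sym}^{\mu_1-\mu_2}\CC^2
\]
as $\mathrm{SL}_2(\CC)$-module, where $\mu=(\mu_1,\mu_2)$ runs over pairs interlacing $\lambda$, i.e. $\lambda_1\ge\mu_1\ge\lambda_2\ge\mu_2\ge 0$. A direct inspection of the interlacing conditions shows that for \emph{every} $\lambda$ in the list both $\mathrm{Sym}^1\CC^2$ and $\mathrm{Sym}^3\CC^2$ occur as summands of $V$: the former from the pair $(\lambda_2+1,\lambda_2)$ (or $(4,3)$ when $\lambda=(4,4,0)$), the latter from any pair with $\mu_1-\mu_2=3$ compatible with the bounds, which is easily checked to exist for all listed $\lambda$.

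For (ii), since $\dim V\ge 15>2=\dim U$ and the two derivations $E_{13},E_{23}$ generating $\mathfrak{u}$ act linearly independently at a generic $v\in V$, the unipotent radical $U$ acts generically freely on $V$. Exactly as in the $\Sigma^{(5,0,0)}\CC^3$-case, one picks an $\mathrm{SL}_2(\CC)$-invariant splitting $V\cong\mathfrak{u}\oplus\Sigma$, and verifies transversality by observing that the infinitesimal $\mathfrak{u}$-action at a generic $v$ hits the removed $\mathrm{Sym}^1\CC^2$-summand (this follows from the nonvanishing of the $\mathrm{SL}_2(\CC)$-equivariant component $\mathfrak{u}\otimes \mathrm{Sym}^0\to \mathrm{Sym}^1$ on the $\mathrm{Sym}^0$-summand indexed by $(\lambda_2,\lambda_2)$, which is present in $V$). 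One obtains $V/P'\cong\Sigma/\mathrm{SL}_2(\CC)$ with $\Sigma\cong V\ominus \mathrm{Sym}^1\CC^2$. For (iii), the summand $\mathrm{Sym}^3\CC^2$ of $\Sigma$ is $\mathrm{SL}_2(\CC)$-generically free (a generic binary cubic has trivial stabilizer, as $-1\in\mathrm{SL}_2(\CC)$ acts as $-1$ on $\mathrm{Sym}^3\CC^2$ and cannot fix a nonzero cubic), so the no-name lemma gives
\[
\Sigma/\mathrm{SL}_2(\CC)\;\cong\;\bigl(\Sigma\ominus\mathrm{Sym}^3\CC^2\bigr)\times\bigl(\mathrm{Sym}^3\CC^2/\mathrm{SL}_2(\CC)\bigr),
\]
which is rational: the first factor is an affine space, and the second is a unirational curve of dimension $4-3=1$, hence rational by L\"uroth.

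The main obstacle is the transversality check in step (ii): for those $\lambda$ in the list where $\mathrm{Sym}^1\CC^2$ occurs in $V$ with multiplicity greater than $1$, one must pick the correct $\mathrm{Sym}^1$-summand to remove (the one hit by the infinitesimal $\mathfrak{u}$-action on the $\mathrm{Sym}^0$-component of a generic $v$). This is entirely parallel to the explicit calculation already performed for $\Sigma^{(5,0,0)}\CC^3$, but has to be carried out for each $\lambda$ in the list individually; no new idea is needed.
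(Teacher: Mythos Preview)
There is a genuine error in step (iii): $\mathrm{Sym}^3\CC^2$ is \emph{not} generically free for $\mathrm{SL}_2(\CC)$. Your argument only excludes $-1$ from the generic stabilizer, but the stabilizer in general position is $\ZZ/3\ZZ$ (the element of $\mathrm{SL}_2(\CC)$ of order $3$ that cyclically permutes the three roots of a generic binary cubic fixes that cubic). The paper in fact states this explicitly. Hence the no-name lemma does not apply to $\mathrm{Sym}^3\CC^2$ as you use it, and the concluding rationality step fails. You could try to repair this by citing the general rationality of $\mathrm{SL}_2(\CC)$-quotients instead, but that is a different argument.

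Step (ii) also has a gap. Checking that one infinitesimal component $\mathfrak{u}\otimes\mathrm{Sym}^0\to\mathrm{Sym}^1$ is nonzero does not show that the linear complement $\Sigma$ of a $\mathrm{Sym}^1$-summand is a global section for the $U$-action. What made the $\Sigma^{(5,0,0)}\CC^3$ case work is that there the removed $\mathrm{Sym}^1\CC^2$ is $Q_{l-1}$, adjacent to $Q_l=\CC$, so the condition ``$u^{-1}\cdot v\in\Sigma$'' is \emph{linear} in $u$. For other $\lambda$ this fails: e.g.\ for $\lambda=(4,4,0)$ with $P'$ one has $V\cong\mathrm{Sym}^4(\CC^3)^\vee$, the $P'$-filtration is $Q_1=\CC,\,Q_2=\mathrm{Sym}^1,\,\ldots,\,Q_5=\mathrm{Sym}^4$, and the equation for $u$ becomes a system of two cubics in two unknowns, with (generically) nine solutions rather than one. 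So $U$-orbits meet $\Sigma$ in several points and $\Sigma$ is not a section.

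The paper avoids both issues by a case distinction on the top quotient $Q_l$ (for whichever of $P$, $P'$ makes $Q_l$ small). When $Q_l\in\{\CC,\CC^2,\mathrm{Sym}^2\CC^2,\mathrm{Sym}^3\CC^2\}$ one passes via $Q_l$ to a section not by $\mathrm{SL}_2(\CC)$ but by the normalizer of the generic stabilizer in $Q_l$---namely $\mathrm{SL}_2(\CC)$, $\mathbb{G}_a$, $\ZZ/2\ltimes\CC^\ast$, $\mathfrak{S}_3$ respectively---and quotients by each of these groups are rational by elementary arguments. Only $\lambda=(8,4,0)$ (where both $Q_l$'s equal $\mathrm{Sym}^4\CC^2$) needs a separate treatment.
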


\begin{proof}
Apart from $\Sigma^{(8, 4, 0)}\CC^3
/P$, the remaining cases have filtrations as
$P$-representations or $P'$-representations with last quotient $Q_l$ one of
\[
\CC, \: \CC^2 , \: \mathrm{Sym}^2 \CC^2, \: \mathrm{Sym}^3 \CC^2 \, .
\]
If $Q_l$ is $\CC$ we have a $(P,\: \mathrm{SL}_2 (\CC ))$-section in our space whence rationality of the quotient. If $Q_l$
is $\CC^2$, then we get a $(P, \: \mathbb{G}_a)$-section, and $\mathbb{G}_a$-quotients are rational.\\
For the end
$\mathrm{Sym}^2 \CC^2$  note that a generic line in this space gives a $(\mathrm{SL}_2 (\CC ), \mathbb{Z}/2 \ltimes \CC^{\ast
})$-section in this space; so every $V$ terminating in $\mathrm{Sym}^2 \CC^2$ has a $(P, \mathbb{Z}/2 \ltimes \CC^{\ast
})$-section $S$ as well. $\CC^{\ast }$ acts in the weight spaces $S_{\chi}$ into which $S$ decomposes via the corresponding
character $\chi$, and $\mathbb{Z}/2$ interchanges $S_{\chi }$ and $S_{-\chi }$. Thus the rationality follows (there is at least
one non-zero one-dimensional weight space $S_0$ in $S$; $S/\CC^{\ast }$ is birational to $S/S_0$ with $\ZZ/2$ acting as
mentioned before).\\
For $Q_l = \mathrm{Sym}^3 \CC^2$ note that the stabilizer $H$ in general position in $\mathrm{SL}_2 (\CC )$
is $\ZZ / 3$, $N(H)= \mathfrak{S}_3$, and hence a generic line is a $(\mathrm{SL}_2 (\CC ), \: \mathfrak{S}_3 )$-section. Thus
a $V$ terminating in $\mathrm{Sym}^3 \CC^2$ has a $(P, \: \mathfrak{S}_3 )$-section, and we reduce to a rationality question
for a quotient by $\mathfrak{S}_3$ which always has a positive answer.\\
Thus it remains to prove rationality for $\Sigma^{(8, 4, 0)}\CC^3/P$. But this has a $P$-generically free two-step quotient for
which the dimension of the corresponding $P$-subrepresentation is bigger than $\dim P =5$.
\end{proof}

It remains to analyze the two exceptional exterior power representations.

\begin{proposition}\xlabel{pLambda3}
The quotient $(\Lambda^3 \CC^{10})/P'$ is rational.
\end{proposition}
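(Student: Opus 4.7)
The plan is to restrict $V = \Lambda^3 \CC^{10}$ to $P' = \mathrm{SL}_9(\CC) \ltimes \CC^9$ and to exploit the large $\mathrm{SL}_9$-stabilizer of a generic bivector in $\Lambda^2 \CC^9$ to produce an iterated section, ultimately reducing the rationality of $V/P'$ to the quotient of a certain $40$-dimensional representation by a copy of $\mathrm{SL}_2^{\times 4}$.

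Writing $\CC^{10} = \CC^9 \oplus \CC e_{10}$, as a $P'$-module $V$ admits the two-step filtration $0 \subset V_1 \subset V$ with $V_1 \cong \Lambda^3 \CC^9$ and $V/V_1 \cong \Lambda^2 \CC^9$; an element $(A, v) \in P'$ acts on $(\alpha, \beta) \in \Lambda^3 \CC^9 \oplus \Lambda^2 \CC^9$ by $(\alpha, \beta) \mapsto (A\alpha + v \wedge A\beta,\, A\beta)$, so the unipotent radical $\CC^9$ is trivial on $V/V_1$. A generic $\tau \in \Lambda^2 \CC^9$ has rank $8$ with $1$-dimensional kernel $L \subset \CC^9$, and its $\mathrm{SL}_9$-stabilizer is $H_0 = \mathrm{Sp}_8(\CC) \ltimes \CC^8$ (preserving $L$ and the symplectic form induced on $\CC^9/L \cong \CC^8$); hence its $P'$-stabilizer is $H_\tau = H_0 \ltimes \CC^9$. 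First one checks that the preimage $\Sigma$ of $\tau$ under $V \to V/V_1$ is a $(P', H_\tau)$-section, an affine space over $V_1 = \Lambda^3 \CC^9$, giving $V/P' \sim \Sigma/H_\tau$.

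Next, on $\Sigma \cong \Lambda^3 \CC^9$ the $\CC^9 \subset P'$ acts by translations $\alpha \mapsto \alpha + v \wedge \tau$, which are injective by hard Lefschetz for the symplectic form on $\CC^9/L$ together with a direct check for the $L$-direction. Fixing a complement $U \cong \CC^8$ of $L$ in $\CC^9$, the $\mathrm{Sp}_8$-isotypic decomposition reads
\[
\Lambda^3 \CC^9 = V(\omega_3) \oplus V(\omega_1) \oplus V(\omega_2) \oplus V(0)
\]
(of dimensions $48$, $8$, $27$, $1$), with $\CC^9 \wedge \tau = V(\omega_1) \oplus V(0)$; thus $\Sigma/\CC^9 \cong V(\omega_3) \oplus V(\omega_2)$ as an $\mathrm{Sp}_8 \ltimes \CC^8$-module, and the remaining $\CC^8$ acts by $(v_3, v_2) \mapsto (v_3 + \phi_{v_2}(v),\, v_2)$, where $\phi \colon V(\omega_2) \otimes V(\omega_1) \to V(\omega_3)$ is the canonical $\mathrm{Sp}_8$-equivariant projection (nonzero because $V(\omega_3)$ appears in $V(\omega_2) \otimes V(\omega_1)$), and $\phi_{v_2}$ is injective for generic $v_2$. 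Taking a section for the $\mathrm{Sp}_8$-action on $V(\omega_2)$ through a generic $v_2^0$, whose $\mathrm{Sp}_8$-stabilizer by classical invariant theory is $K \cong \mathrm{SL}_2^{\times 4}$ (the centralizer of a regular symplectic-symmetric operator on $U$), and then modding out $\CC^8$, we reduce to the rationality of $\CC^{40}/K$, where $\CC^{40} = V(\omega_3)/\phi_{v_2^0}(\CC^8)$ inherits the $K$-action.

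The hard part will be this final step. Restricting the decomposition above to $K$ yields
\[
\CC^{40} \cong \bigoplus_{j=1}^4 W_j \;\oplus\; \bigoplus_{\substack{T \subset \{1,2,3,4\} \\ |T| = 3}} \bigotimes_{i \in T} W_i,
\]
where $W_j$ denotes the standard representation of the $j$-th $\mathrm{SL}_2$-factor. I would finish by eliminating the $\mathrm{SL}_2$-factors one at a time: for each $\mathrm{SL}_2^{(k)}$, the summands on which it acts non-trivially together form a representation of the form $W_k \otimes \CC^{13}$, for which the standard section (normalizing a generic $2 \times 2$ block) gives a rational quotient compatible with the remaining factors---in the spirit of the symplectic restriction arguments used in Section \ref{sInductionProcessSp}.
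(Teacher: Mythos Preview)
Your reduction breaks at the step where you pass from the $\mathrm{Sp}_8\ltimes\CC^8$-module $V(\omega_3)\oplus V(\omega_2)$ to $\CC^{40}/K$. The action of $\mathrm{Sp}_8(\CC)$ on $V(\omega_2)=(\Lambda^2\CC^8)_0$ is \emph{not} generically transitive: the generic stabilizer $K\simeq\mathrm{SL}_2(\CC)^{\times 4}$ has dimension $12$, so generic orbits have dimension $24$ in a $27$-dimensional space. Fixing a single generic point $v_2^0$ therefore does not give a $(G,K)$-section; you are throwing away the three-parameter family of orbits. A dimension count confirms this: $\dim(\Lambda^3\CC^{10})/P'=120-89=31$, whereas your $\CC^{40}/K$ has dimension $40-12=28$.

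The correct section in $V(\omega_2)$ is the $3$-dimensional space $S=(\Lambda^2\CC^8)_0^{K}$, and the group acting on it is the \emph{normalizer} $N(K)=\mathfrak{S}_4\ltimes\mathrm{SL}_2(\CC)^{\times 4}$, not $K$ itself. After this correction the problem becomes the rationality of the extension $0\to(\Lambda^3\CC^8)_0\to W'\to S\to 0$ modulo $N(K)\ltimes\CC^8$, and the residual $\mathfrak{S}_4$ (which permutes the four $\mathrm{SL}_2$-factors and hence the summands $W_j$ and $\bigotimes_{i\in T}W_i$) is precisely what makes the endgame nontrivial. Your proposed strategy of ``eliminating the $\mathrm{SL}_2$-factors one at a time'' is incompatible with this $\mathfrak{S}_4$-symmetry: the factors cannot be treated individually. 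The paper handles this by reducing to $\bigl(\sum_{|T|=3}\bigotimes_{i\in T}R_i\;+\;\sum_j R_j\bigr)/N(K)$, taking a $(N(K),\Gamma)$-section in $\sum_j R_j$ with $\Gamma=\mathfrak{S}_4\ltimes\mathbb{G}_a^{4}$, and then embedding $\Gamma$ into a larger group $\Gamma'=\mathfrak{S}_4\ltimes\mathbb{G}_a^{12}$ for which the quotient of $\sum\bigotimes R_i$ is transparently rational, so that the $\Gamma$-quotient is a vector bundle over it.
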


\begin{proof}
The $P'$-representation is an extension
\[
0 \to \Lambda^3 \CC^9 \to V \to \Lambda^2 \CC^9 \to 0 \, .
\] 
The group $\mathrm{SL}_9 (\CC )$ acts generically transitively in $\Lambda^2 \CC^9$ with stabilizer in general position
$\mathrm{Sp}_8 (\CC )\ltimes \CC^8$. The representation $\Lambda^3 \CC^9$ decomposes as $\mathrm{Sp}_8 (\CC
)$-representation
\begin{gather*}
\Lambda^3 (\CC^9) = \Lambda^3 (\CC^8 + \CC ) \\
= \Lambda^3 \CC^8 + \Lambda^2 \CC^8 = (\Lambda^3 \CC^8)_0 + \CC^8 + (\Lambda^2 \CC^8)_0 + \CC\, ,
\end{gather*}
subscript $0$ referring to spaces of traceless (harmonic) tensors for $\mathrm{Sp}_8 (\CC )$. Thus $V/P'$ is birational to the
quotient of the extension
\[
0 \to (\Lambda^3 \CC^8)_0 \to W \to (\Lambda^2 \CC^8)_0 \to 0
\]
by the group $\mathrm{Sp}_8 (\CC )\ltimes \CC^8$. The stabilizer in general position in $(\Lambda^2 \CC^8)_0$ inside
$\mathrm{Sp}_8 (\CC )$ is isomorphic to $H=\prod_{i=1}^4\mathrm{SL}_2 (\CC )$, the normalizer being a group $N(H)=\mathfrak{S}_4
\ltimes \mathrm{SL}_2 (\CC )^4$. As $N(H)$-representation \[\CC^8 = R_1 + R_2 + R_3 + R_4\, ,\] $R_i = \CC^2$, a standard
representation of $\mathrm{SL}_2 (\CC )$, $\mathfrak{S}_4$ permuting the copies. The $H$-invariants in $(\Lambda^2 \CC^8)_0$
form a three-dimensional space $S$, the standard representation for $N(H)/H = \mathfrak{S}_4$. Now $(\Lambda^3 \CC^8)_0$
decomposes as $N(H)$-representation
\begin{gather*}
(\Lambda^3 \CC^8)_0 =\\ \left( \sum_{\stackrel{1\le i< j < k \le 4}{\{ i, j, k \} \cup \{h \} = \{ 1,2,3,4 \}}}   (\Lambda^2 R_i + \Lambda^2 R_j  + \Lambda^2 R_k) \otimes
R_{h} \right)^0  +
\sum_{1\le i< j < k
\le 4} R_i
\otimes R_j
\otimes R_k\, , 
\end{gather*}
where the first summand denotes the $N(H)$-representation obtained by removing one $N(H)$-invariant summand $R_1 + R_2 + R_3 + R_4$ from the
$N(H)$-representation
\[
 \sum_{\stackrel{1\le i< j < k \le 4}{\{ i, j, k \} \cup \{h \} = \{ 1,2,3,4 \}}}  (\Lambda^2 R_i + \Lambda^2 R_j  + \Lambda^2 R_k) \otimes
R_{h} 
\]
(so the former is $2 (R_1 + R_2 + R_3 + R_4)$ as $H$-representation), 
and the $N(H)$-action is the one suggested by the notation ($\mathfrak{S}_4$ acts by permuting the
indices). So we have to prove rationality of the quotient of the extension 
\[
0 \to (\Lambda^3 \CC^8)_0 \to W' \to S \to 0
\]
by the group $N(H) \ltimes \CC^8 = N(H) \ltimes (R_1 + R_2 + R_3 + R_4)$. Note that the translations $\CC^8$ map $S$ only into
$(\sum_{1\le i< j < k \le 4} (\Lambda^2 R_i + \Lambda^2 R_j  + \Lambda^2 R_k) \otimes R_{h})^0$, and that the $N(H)$-action on
$\sum_{1\le i< j < k \le 4} R_i \otimes R_j \otimes R_k$ is generically free, so that it suffices to prove stable rationality
of the quotient
\[
\left( \sum_{1\le
i< j < k \le 4} R_i \otimes R_j \otimes R_k \right) /N(H)
\]
of level at most $8 + 3 = \dim (R_1 + R_2 + R_3 + R_4 ) + \dim S$. We will prove rationality of the quotient by $N(H)$ of the
$N(H)$-representation
\[
(\sum_{1\le i< j < k \le 4} R_i \otimes R_j \otimes R_k ) + (R_1 + R_2 + R_3 + R_4)\, .
\]
A point in $R_1 + R_2 + R_3 + R_4$ is an $(N(H), \Gamma )$-section, where 
\[
\Gamma = \mathfrak{S}_4 \ltimes (\mathbb{G}_a \times \mathbb{G}_a \times \mathbb{G}_a \times \mathbb{G}_a) \, .
\]
Thus it remains to prove rationality of $(\sum_{1\le i< j < k \le 4} R_i \otimes R_j \otimes R_k ) /\Gamma$. We embed $\Gamma$
in a larger group acting on $(\sum_{1\le i< j < k \le 4} R_i \otimes R_j \otimes R_k )$, namely
\[
\Gamma' = \mathfrak{S}_4 \ltimes \prod_{1\le i< j < k \le 4} \left( (\mathbb{G}_a )_i \times  (\mathbb{G}_a )_j  \times
(\mathbb{G}_a )_k \right)
\]
where we added subscripts to $\mathbb{G}_a$ to keep track of the copies. Now  $(\sum_{1\le i< j < k \le 4} R_i \otimes R_j
\otimes R_k ) /\Gamma$ is birational to a vector bundle over the base $(\sum_{1\le i< j < k \le 4} R_i \otimes R_j
\otimes R_k )/\Gamma'$. The latter is clearly rational.
\end{proof}

\begin{proposition}\xlabel{pLambda4}
The quotient $(\Lambda^4 \CC^9)/P'$ is rational.
\end{proposition}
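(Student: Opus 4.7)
The plan parallels the strategy of Proposition \ref{pLambda3}: restrict $V = \Lambda^4\CC^9$ to $P' \cong \mathrm{SL}_8(\CC)\ltimes \CC^8$, take a suitable section in the top quotient of the associated filtration, and reduce the resulting stack of extensions to a rationality question for a smaller group. As a $P'$-module, $V$ sits in
\begin{equation*}
0 \longrightarrow \Lambda^4\CC^8 \longrightarrow V \longrightarrow \Lambda^3\CC^8 \longrightarrow 0,
\end{equation*}
where the unipotent radical $\CC^8$ acts trivially on the quotient and takes $\alpha\in \Lambda^3\CC^8$ to $\alpha\wedge u \in \Lambda^4\CC^8$. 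The crucial input is the classical theorem (cf.~\cite{Po-Vi}) that $\mathrm{GL}_8(\CC)$ has a dense orbit on $\Lambda^3\CC^8$, represented by the Cartan $3$-form $\omega_0(X,Y,Z)=\mathrm{tr}(X[Y,Z])$ on $\mathfrak{sl}_3(\CC)\cong \CC^8$, with connected stabilizer $G'=\mathrm{PGL}_3(\CC)$ acting on $\mathfrak{sl}_3$ via the adjoint representation. Since scalars act on $\Lambda^3\CC^8$ by cubes, the $\mathrm{SL}_8(\CC)$-orbit of $\omega_0$ has codimension one.

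The line $\CC\cdot\omega_0$ then serves as an $(\mathrm{SL}_8(\CC),N)$-section, with $N=N_{\mathrm{SL}_8(\CC)}(G')$ extending $G'$ by a $\ZZ/2$ generated by the Chevalley involution $X\mapsto -X^T$ (adjusted by a scalar so as to land in $\mathrm{SL}_8$). Combining this section with the no-name lemma applied to the extension above, $V/P'$ becomes birational to the quotient of $\Lambda^4\mathfrak{sl}_3 \oplus \CC\cdot\omega_0$ by $N\ltimes \mathfrak{sl}_3$. On the fibre over a nonzero $t\omega_0$, translation by $u\in\mathfrak{sl}_3$ is $\beta\mapsto \beta + t\omega_0\wedge u$; dividing by this action peels off the image of the $G'$-equivariant injection $u\mapsto \omega_0\wedge u$, which is a copy of the adjoint representation of $G'$ (the injectivity holding because $\omega_0$ lies in the dense $\mathrm{GL}_8(\CC)$-orbit and is therefore non-degenerate). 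The problem is thereby reduced to showing that $\bigl(\Lambda^4\mathfrak{sl}_3/(\omega_0\wedge \mathfrak{sl}_3)\bigr)/N$ is rational.

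To finish, I would decompose $\Lambda^4\mathfrak{sl}_3$ into $\mathrm{PGL}_3$-isotypic components via standard weight-space calculations on $\mathfrak{sl}_3$, locate the adjoint copy $\omega_0\wedge \mathfrak{sl}_3$, and pick out a large generically $N$-free subrepresentation of the $62$-dimensional quotient. One more application of the no-name lemma reduces rationality to that of a quotient of a low-dimensional $N$-representation; restricting further to a maximal torus of $G'$ (a $(\CC^*)^2$) together with its Weyl group inside $N/G'$ leads to a classical rationality question for a quotient by a normalizer of a torus, which is handled by the standard tools already used in the earlier propositions.

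The main obstacle is the final combinatorial step: after decomposing $\Lambda^4\mathfrak{sl}_3$ as a $\mathrm{PGL}_3$-module, one must track the Chevalley-involution action on each isotypic component and verify that after removing $\omega_0\wedge\mathfrak{sl}_3$ one can still isolate a generically $N$-free summand of dimension at least $\dim N$. The bookkeeping is substantial but finite, entirely analogous to the $\mathfrak{S}_4\ltimes\mathrm{SL}_2^4$-analysis which completed the proof of Proposition \ref{pLambda3}.
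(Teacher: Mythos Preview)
Your reduction matches the paper's: both pass to the section in $\Lambda^3\CC^8$, identify the generic stabilizer as $\mathrm{PGL}_3(\CC)$ acting on $\CC^8\cong V(1,1)$ by the adjoint representation, compute $\Lambda^4 V(1,1) = 2\,V(1,1) \oplus 2\,V(2,2)$, use the unipotent radical to peel off one copy of $V(1,1)$, and arrive at the rationality of $(V(1,1)\oplus 2\,V(2,2))/N(H)$. Two points, however, deserve attention.

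First, a correction on the normalizer: $N(H)\subset\mathrm{SL}_8(\CC)$ is not $\ZZ/2\ltimes\mathrm{PGL}_3$ but $\langle\epsilon\rangle\ltimes\mathrm{PGL}_3$, where $\epsilon$ acts on $V(1,1)$ by $M\mapsto \zeta M^t$ with $\zeta$ a primitive $16$th root of unity, so that $\epsilon^2$ is a generator of the center $\ZZ/8\ZZ$ of $\mathrm{SL}_8$. The center acts nontrivially on $\Lambda^4\CC^8$ and cannot be ignored; your ``adjusted by a scalar'' hides a cyclic group of order $16$, not $2$.

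Second, and more substantively, your final paragraph is where the actual difficulty lies, and your plan there does not match what the paper does. The paper does not pass to a torus normalizer. Instead, since $N(H)$ already acts generically freely on $V(2,2)\oplus V(1,1)$, it suffices (by the no-name lemma) to exhibit an auxiliary generically free $N(H)$-representation $R$ of dimension at most $\dim V(2,2)=27$ with rational quotient. The paper takes $R=(\CC^3\otimes(\CC^3)^{\vee})^{\oplus 2}$, pairs of $3\times 3$ matrices, and proves $R/N(H)$ rational by a genuinely geometric argument: $R/(\ZZ/2\ltimes\mathrm{PGL}_3)$ is identified with the relative $\mathrm{Pic}^0$ over the moduli of plane cubics, modulo the involution $\mathcal{L}\mapsto\mathcal{L}^{\vee}$, which is a $\PP^1$-bundle with the obvious section given by the trivial bundle, hence rational; the residual $\CC^{\ast}$ (absorbing the center of $\mathrm{SL}_8$) is then handled because torus actions admit rational sections. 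Your proposed torus-normalizer endgame is not obviously wrong, but it is not worked out, and this is exactly the step where the argument has real content; the Jacobian trick is the one concrete idea you are missing.
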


\begin{proof}
Here the $P'$-representation is an extension
\[
0 \to \Lambda^4 \CC^8 \to V \to \Lambda^3 \CC^8 \to 0 \, .
\]
The stabilizer in general position inside $\mathrm{SL}_8 (\CC )$ of a generic point in $\Lambda^3 \CC^8$ is $H=\mathrm{PSL}_3
(\CC )$. It is embedded in $\mathrm{SL}_8$ via the representation of $\mathrm{SL}_3 (\CC )$ on $V(1,1) \subset \CC^3 \otimes
(\CC^3)^{\vee }$ (adjoint representation) which has dimension $8$. Its normalizer inside $\mathrm{SL}_8 (\CC )$ is 
$N(H) = \langle \epsilon \rangle \ltimes \mathrm{PSL}_3 (\CC ))$, where $\epsilon^2$ is a generator of the center $C = \ZZ /8\ZZ$ of $\mathrm{SL}_8 (\CC )$ and $\epsilon$ acts on $\mathrm{PSL}_3 (\CC )$ via an
outer automorphism $g\mapsto (g^t)^{-1}$. The action of $\epsilon$ on $V(1,1)$ is given by exchanging $\CC^3$ and $(\CC^3)^{\vee
}$ and multiplication by $\zeta = \mathrm{exp}(2\pi i /16 )$ (this is inserted to make the determinant of the total transformation equal to $1$). Thus, in terms of traceless $3\times 3$ matrices $M$, the  generator $\epsilon$ acts via
\[
\epsilon \cdot M : = \zeta M^t \, .
\]
The $H$-invariants $S =(\Lambda^3 \CC^8 )^H$ are one-dimensional. We have to compute how $\Lambda^4 \CC^8$ decomposes as $N(H)$-representation. Note that we have to compute
$\Lambda^4 V(1,1)$ as $\mathrm{PSL}_3 (\CC )$-representation. Noting that generally as $\mathrm{GL}(E) \times
\mathrm{GL}(F)$-module 
\[
\Lambda^c (E\otimes F ) = \bigoplus_{\lambda } \Sigma^{\lambda } E \otimes \Sigma^{\bar{\lambda }} F
\]
($\bar{\lambda }$ denoting the conjugate partition to $\lambda$, and the sum running over Young diagrams $\lambda$ with $c$
boxes), we find
\begin{gather*}
\Lambda^2 (\CC^3 \otimes (\CC^3)^{\vee }) = (\CC^3)^{\vee }\otimes \mathrm{Sym}^2 (\CC^3)^{\vee } + \CC^3 \otimes
\mathrm{Sym}^2 (\CC^3), \\
\Lambda^3 (\CC^3 \otimes (\CC^3)^{\vee }) = \mathrm{Sym}^3 \CC^3 + \mathrm{Sym}^3 (\CC^3)^{\vee } + \Sigma^{2,1} \CC^3 \otimes
\Sigma^{2,1} (\CC^3 )^{\vee }, \\
\Lambda^4 (\CC^3 \otimes (\CC^3)^{\vee }) \\= \mathrm{Sym}^2 \CC^3 \otimes \mathrm{Sym}^2 (\CC^3)^{\vee } + \CC^3 \otimes
\Sigma^{3,1} (\CC^3)^{\vee } + \Sigma^{3, 1} \CC^3 \otimes (\CC^3)^{\vee }\, .
\end{gather*} 
Hence, in view of $\Lambda^i (V(1,1) + \CC ) = \Lambda^i V(1,1) + \Lambda^{i-1} V(1,1)$ and the correspondence $V(a,b) =
\Sigma^{a+b, b} \CC^3$, we find inductively
\begin{gather*}
\Lambda^2 V(1,1) = V(3,0) + V(1,1) + V(0, 3) , \\
\Lambda^3 V(1,1) = V(3,0) + V(0, 3) + V(2,2) + V(1,1) + V(0, 0), \\
\Lambda^4 V(1,1) = 2 (V(1,1) + V(2,2)) \, ,
\end{gather*}
where the action of $\epsilon$ is still induced by the process of exchanging $\CC^3$ with $(\CC^3)^{\vee }$ and multiplication by
$\zeta$. Thus we have to prove rationality of the extension
\[
0 \to 2(V(1,1) + V(2,2)) \to W \to S =  (\Lambda^3 \CC^8 )^H \to 0
\] 
modulo the group $N(H) \ltimes \CC^8 = N(H) \ltimes V(1,1)$, hence we have to prove rationality of 
\[
\left( V(2,2) + V(2,2) + V(1,1) \right) / N(H)\, .
\]
The action of $N(H)$ on $V(2,2) + V(1,1)$ is already generically free, so it suffices to prove stable rationality of this of
level at most $\dim V(2,2) = 27$, and for this it suffices to exhibit a generically free $N(H)$-representation of dimension
$\le 27$ with rational quotient. We take
\[
R=\CC^3 \otimes (\CC^3)^{\vee } + \CC^3 \otimes (\CC^3)^{\vee }
\]
(pairs of $3\times 3$-matrices). Again we let $\epsilon$ act via exchanging $\CC^3$ and $(\CC^3 )^{\vee }$ and multiplication by $\zeta$. It will be convenient to prove that $R/N(H)$ is rational by showing that $R/(\CC^{\ast } \times (\ZZ/2\ZZ \ltimes \mathrm{PGL}_3 (\CC )))$ is rational over which it is generically a torus bundle. Here $\CC^{\ast } \times (\ZZ/2\ZZ \ltimes \mathrm{PGL}_3 (\CC ))$ is the normalizer of $\mathrm{PGL}_3 (\CC )$ in $\mathrm{GL}_8 (\CC )$. 

\

Now $R/(\ZZ/2\ZZ \ltimes \mathrm{PGL}_3 (\CC ))$ is birational to the relative Jacobian of degree $0$ line bundles over the
parameter space $\mathcal{P}$ of plane cubics (rational, birational to $(\CC^3 \otimes (\CC^3)^{\vee } + \CC^3 \otimes (\CC^3)^{\vee })/H$),
modulo the involution identifying $\mathcal{L}$ and $\mathcal{L}^{\vee }$ in a fibre. The quotient is a $\PP^1$-bundle $\mathcal{J} \to \mathcal{P}$ over a
rational base $\mathcal{P}$, which has a section corresponding to the trivial line bundle. Thus $R/(\ZZ/2\ZZ \ltimes \mathrm{PGL}_3 (\CC ))$ is rational, and if one in addition divides by $\CC^{\ast }$, one still gets something rational, because $\CC^{\ast}$ acts on $\mathcal{P}$ linearly (induced by the scaling $(x,y,z)\mapsto (\lambda x, \lambda y, z)$), so $\mathcal{P}/\CC^{\astÊ}$ is rational, and $\mathcal{J}/\CC^{\ast }$ is a Zariski locally trivial $\PP^1$-bundle over $\mathcal{P}/\CC^{\astÊ}$ as torus actions have rational sections.
\end{proof}

\section{The case of the symplectic group}\xlabel{sInductionProcessSp}

We proceed to discuss the analogues of the results of section \ref{sInductionProcessSL} for the symplectic group. 

\

We put $N:= 2n$ and consider irreducible $\mathrm{Sp}_N (\CC )$-representations 
\[
V = \Sigma^{(\lambda_1, \dots , \lambda_n )}_0 \CC^N \, .
\]
They are indexed by nonincreasing $n$-tuples of nonnegative integers $\lambda = (\lambda_1, \dots , \lambda_n )$, and
$\Sigma^{\lambda }_0 \CC^N$ and $\Sigma^{\mu }_0\CC^N$ are isomorphic if and only if $\lambda =\mu$ (in contrast to the
$\mathrm{SL}_n (\CC )$-case). Concretely (see \cite{G-W} section 10.2.3), if $A$ is some tableau of shape $\lambda$, $k
=|\lambda | $, (thus $A$ is a numbering of the boxes of $\lambda$ with the ciphers from $1$ to $k$), then
\[
\Sigma^{\lambda }_0 \CC^N = s(A) \left(  \mathcal{H} ( \otimes^k \CC^N , \omega ) \right)
\]
where $\mathcal{H} ( \otimes^k \CC^N , \omega )$ is the space of $\omega$-\emph{harmonic tensors} or \emph{traceless tensors},
i.e. those tensors
$u\inÊ\otimes^k \CC^N$ which are annihilated by all contractions $C_{ij}\, : \otimes^k \CC^N \to \otimes^{k-2}\CC^{N}$ with the
symplectic form $\omega$:
\[
C_{ij} (v_1 \otimes \dots \otimes v_k ) = \omega (v_i , v_j ) v_1 \otimes \dots \otimes \hat{v_i} \otimes \dots \otimes
\hat{v_j} \otimes \dots \otimes v_k \, ,
\] 
and $s(A)$ is the usual \emph{Young symmetrizer} associated to the tableau $A$. Recall that $s(A)$ is an element in the group
algebra $\CC [\mathfrak{S}_k ]$, product of the column skew symmetrizer and row symmetrizer of $A$. This justifies the notation
$\Sigma_0$ in analogy to the notation $\Sigma$ for $\mathrm{GL}_N (\CC )$ or $\mathrm{SL}_N (\CC )$.

\

Now $\CC^N$ is a $\mathrm{Sp}_N (\CC )$-representation, $\mathrm{Sp}_N (\CC )$ acts generically
transitively in it. We choose the standard symplectic basis \[\mathcal{B}Ê= (e_1, \dots , e_n, \; f_n, \dots , f_1 )\] (in this order) such that $\omega$
is given by 
\[
\omega (e_i , f_j ) = \delta_{ij}\, ,
\]
so that $\mathrm{Sp}_{2n}(\CC )$ gets identified with the group of invertible $2n\times 2n$-matrices $M$ such that
\[
M^t J_{2n} M = J_{2n}
\]
where $J_{2n}$ satisfies
\[
J_{2n} = \left( \begin{array}{ccc} 0 & 0 & 1 \\
                                                  0 & J_{2n-2} & 0\\
                                                   -1 & 0 & 0 \end{array}\right)
\]
and 
\[
J_2 = \left(\begin{array}{cc} 0 & 1\\ -1 & 0 \end{array}\right)\, .
\]
We denote the stabilizer of $e_1$ in $\CC^N$ again by $P$. 

\

We want to describe $P$ also in concrete matrix terms. The ordered basis $\mathcal{B}$ is convenient for seeing the symmetry of the situation. 
Namely, the subgroup $P \subset \mathrm{Sp}_N (\CC )$ is then given by the matrices
\[
\left( \begin{array}{ccc}
1     &   v^t J_{2n-2} g     &   c\\
        0_{2n-2}   &     g          &                      v  \\    
        0   &     0_{2n-2}^t          &                    1
\end{array}\right)
\]
where $0_{2n-2}$ is a column vector with $(2n-2)$ zero entries, $v$ is a column vector in $\CC^{2n-2}$, and
$g\in\mathrm{Sp}_{2n-2} (\CC )$ with the same matrix realization as before. Thus
\[
P = \mathrm{Sp}_{2n-2} \ltimes R_u (P)
\]
where $R_u (P)$ is the unipotent radical of $P$, a solvable group consisting of the matrices
\[
\left( \begin{array}{ccc}
1     &   v^t J_{2n-2}      &   c\\
        0_{2n-2}   &     \mathrm{id}_{2n-2}          &                      v  \\    
        0   &     0_{2n-2}^t          &                    1
\end{array}\right)
\]
which is an extension
\[
0 \to \mathbb{G}_a \to R_u (P) \to \mathbb{G}_a^{2n-2} \to 0 \, .
\]
In fact the group $\mathbb{G}_a$ given by matrices
\[
\left( \begin{array}{ccc}
1     &   0_{2n-2}^t    &   c\\
        0_{2n-2}   &     \mathrm{id}_{2n-2}         &                      0_{2n-2}  \\    
        0   &     0_{2n-2}^t          &                    1
\end{array}\right)
\]
is central in $P$. 

\begin{remark}\xlabel{rSectionSp}
As $\mathrm{Sp}_{2n} (\CC )$ acts generically transitively in $\CC^{2n}$ we have for a generically free representation $W$ of
$\mathrm{Sp}_{2n}(\CC )$ that
\[
(W \oplus \CC^{2n})/\mathrm{Sp}_{2n } (\CC ) = W / P \, .
\]
In view of this we have to understand how irreducible $\mathrm{Sp}_{2n } (\CC )$-representations $V$ behave when restricted to
$P$.
\end{remark}

According to \cite{G-W}, Chapter 8, the representation $V$ decomposes as a $\mathrm{Sp}_{N-2}( \CC )$-representation as
\[
V = \bigoplus_{\mu } m (\lambda , \mu ) \Sigma_0^{\mu } \CC^{N-2}
\]  
where the multiplicity $m(\lambda, \mu )$ is nonzero if and only if $\mu$ satisfies the \emph{double interlacing condition}
\[
\lambda_j \ge \mu_j \ge \lambda_{j+2}, \: j =1, \dots , n-1
\]
where we put $\lambda_{n+1} = 0$. If under these conditions
\[
x_1 \ge y_1 \ge x_2 \ge y_2 \ge \dots \ge x_n \ge y_n
\]
is a nonincreasing rearrangement of $\{ \lambda_1, \dots , \lambda_n, \mu_1, \dots, \mu_{n-1}, 0 \}$, then the multiplicity is
given by a product formula
\[
m(\lambda , \mu ) = \prod_{j=1}^n (x_j - y_j +1)\, .
\]

The structure of $P$-module here is a little more difficult to describe than in the $\mathrm{SL}_n (\CC )$-case. As in the
$\mathrm{SL}_n (\CC )$ case, the $P$-module $V$ has a filtration
\[
0 \subset V_1 \subset \dots \subset V_l = V
\] 
such that $Q_i = V_i /V_{i-1} \subset V/V_{i-1}$, $i=1, \dots , l$, $V_{0}:= (0)$, is completely reducible and maximal with that
property. So $R_u (P)$ acts trivially in $Q_i$. 

\begin{remark}\xlabel{rPmodules}
If $V$ is irreducible, $V= \Sigma^{(\lambda_1, \dots , \lambda_n )}_0 \CC^{2n}$, then the maximal completely reducible
subrepresentation $Q_1$ of $V$ as $P$-module is the irreducible $\mathrm{Sp}_{2n-2} (\CC )$-module 
\[
\Sigma^{(\lambda_2, \dots , \lambda_{n-1} , \lambda_n ) } \CC^{2n -2}\, .
\]
This follows since there is a unique line of Borel eigenvectors in $V$ (highest weight vectors), hence there is a unique such line
in $Q_1$. The highest weight of $Q_1$ is the restriction of the highest weight of $V$ to $\mathrm{Sp}_{2n-2} (\CC )$.
\end{remark}

From the matrix form of $P$ given above we deduce that $\CC^N$ itself has a filtration as a $P$-module with
\[
Q_1 = \CC_{(1)} , \; Q_2 = \CC^{N-2} , \; Q_3 = \CC_{(2)} \, ,
\]
where we put subscripts to distinguish the copies of $\CC$.

In general the filtration in the symplectic case is symmetric which is not surprising in view of the fact that symplectic
representations are self-dual.

\

To completely understand the $P$-module filtrations, we have to use a result of \cite{W-Y}: note that the above embedding $\mathrm{Sp}_{2n-2} (\CC )
\subset \mathrm{Sp}_{2n } (\CC )$ factors over an embedding $\mathrm{SL}_2 (\CC ) \times \mathrm{Sp}_{2n-2} (\CC )$ in
$\mathrm{Sp}_{2n } (\CC )$; then in the decomposition
of
$V$ as $\mathrm{SL}_2 (\CC ) \times \mathrm{Sp}_{2n-2} (\CC )$-module, the multiplicity space of $\Sigma^{\mu }_0 \CC^N$ as above
can be interpreted as the $\mathrm{SL}_2 (\CC )$-module
\[
\bigotimes_{i=1}^n V_{x_i - y_i} \, ,
\]
where $V_{x_i -y_i}$ are binary forms of degree $x_i -y_i$. Note that the central subgroup $\mathbb{G}_a$ above is contained
in $\mathrm{SL}_2 (\CC )$ as the unipotent radical in this picture.

\begin{proposition}\xlabel{pStructurePmodules}
If $V$ is a $P$-module coming from an $\mathrm{Sp}_{2n} (\CC )$-module by restriction with filtration
\[
0 \subset V_1 \subset \dots \subset V_l = V
\] 
and as $\mathrm{Sp}_{2n-2} (\CC )$-modules
\[
V = Q_1 \oplus \dots \oplus Q_l\, ,
\]
where $Q_i = V_i /V_{i-1}$, 
then the $Q$'s are determined from the branching rule for $\mathrm{Sp}_{2n} (\CC ) \to \mathrm{SL}_2 (\CC )
\times \mathrm{Sp}_{2n-2} (\CC )$ given above (which implies in particular how the copies of one irreducible representation $R$
of $\mathrm{Sp}_{2n-2}(\CC )$ are distributed in the filtration) together with the following facts:
\begin{itemize}
\item[(1)]
For every irreducible $Q'$ in $Q_i$ there is an irreducible $S'$ in $Q_{i-1}$ with $S' \subset Q'\otimes \CC^{2n-2}$ as
$\mathrm{Sp}_{2n-2} (\CC )$-representation.
\item[(2)]
The central subgroup $\mathbb{G}_a$ shifts the filtration degree by $2$:
\[
\mathbb{G}_a \cdot (V_j) \subset V_{j-2}
\]
and for every irreducible $Q'$ in $Q_i$ there is an irreducible summand isomorphic to $Q'$ inside $Q_{i-2}$ or $Q'$ is mapped to
zero under the subgroup $\mathbb{G}_a$. 
\item[(3)]
The symmetry $Q_i = Q_{l+1 -i}$ holds.
\end{itemize}
\end{proposition}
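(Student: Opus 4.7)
The plan is to prove (2) first by expressing the central generator of $\mathfrak{r}_u(P)$ as a Lie bracket of non-central root vectors, to deduce (1) from (2) via the universal property of the socle filtration, and to establish (3) from self-duality of $V$ combined with a Loewy rigidity coming from the $\mathrm{SL}_2$-weight grading. Determination of the individual $Q_i$'s then follows by tracking how the central $\mathbb{G}_a$ acts on each multiplicity space $M_\mu = \bigotimes_i V_{x_i - y_i}$.

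For (2), I would use the standard fact that $R_u(P)\cdot V_j\subseteq V_{j-1}$ for any socle filtration. The central generator is the root vector $e_{2\epsilon_1}$, which is a non-zero scalar multiple of the bracket $[e_{\epsilon_1-\epsilon_i},\,e_{\epsilon_1+\epsilon_i}]$ of two non-central elements of $\mathfrak{r}_u(P)$ (for any $i\ge 2$). Each factor shifts the filtration by one, hence the commutator by two: $e_{2\epsilon_1}\cdot V_j\subseteq V_{j-2}$. Since the central $\mathbb{G}_a$ commutes with the Levi $\mathrm{Sp}_{2n-2}(\CC)$, the induced map $Q_i\to Q_{i-2}$ is $\mathrm{Sp}_{2n-2}$-equivariant, so Schur's lemma applied to each irreducible summand $Q'\subseteq Q_i$ yields the stated dichotomy. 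For (1), the non-central quotient $\mathfrak{r}_u(P)/\mathrm{center}\cong \CC^{2n-2}$ (the standard $\mathrm{Sp}_{2n-2}$-module) gives an equivariant map $\CC^{2n-2}\otimes Q_i\to Q_{i-1}$; if this vanished on some irreducible summand $Q'\subseteq Q_i$ with lift $\widetilde{Q'}\subseteq V_i$, then combining with (2) would force $R_u(P)\cdot\widetilde{Q'}\subseteq V_{i-2}$, so the image of $\widetilde{Q'}$ in $V/V_{i-2}$ would lie in $(V/V_{i-2})^{R_u(P)}=V_{i-1}/V_{i-2}$, contradicting that $\widetilde{Q'}$ projects isomorphically onto $Q'\subseteq V_i/V_{i-1}$. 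Hence the map is non-zero on $Q'$ and any irreducible in its image furnishes the $S'$ of (1).

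Property (3) follows from self-duality once Loewy rigidity (socle filtration $=$ radical filtration as filtrations) is in hand. I would establish rigidity by identifying $V_\bullet$ with the $\mathrm{SL}_2$-weight filtration $F_i=\bigoplus_{d\ge w_{\max}-(i-1)} V(d)$ attached to the Levi torus $\mathrm{GL}_1\subseteq \bar P$ of the line-stabilising parabolic $\bar P\supseteq P$: the cocharacter grades $V$ by $\mathrm{SL}_2$-weights, with $\mathfrak{r}_u(P)/\mathrm{center}$ raising weights by $+1$ and the centre by $+2$, so $F_\bullet$ is a $P$-stable filtration with $R_u(P)$-fixed quotients, forcing $F_i\subseteq V_i$; Kostant's theorem identifies $V^{R_u(\bar P)}$ with $V(w_{\max})$ for an irreducible $V$, giving equality at the bottom layer, and an induction on successive $P$-quotients pins down the rest. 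The same argument applied to the radical filtration identifies it with $F_\bullet$, so socle and radical filtrations coincide, and duality then yields $Q_i\simeq Q_{l+1-i}$. With (1), (2), (3) and the weight grading in place, each irreducible $\Sigma^\mu_0\CC^{2n-2}$-copy in $V$ occupies a fixed layer determined by the $\mathrm{SL}_2$-weight of the corresponding line in $M_\mu$, so the filtration is completely determined by the branching data.

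The main obstacle will be Loewy rigidity in (3); without it one only obtains a filtration with the same composition factors but possibly permuted, too weak for the palindromic claim. The identification with the $\mathrm{SL}_2$-weight filtration supplies it, but it requires Kostant's theorem for $\bar P$ together with a careful induction on successive $P$-quotients of $V$, using the explicit $\mathrm{SL}_2\times\mathrm{Sp}_{2n-2}$-decomposition.
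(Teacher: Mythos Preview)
Your proof is correct and runs closely parallel to the paper's, but with one genuine methodological difference worth noting. For part (2), you derive the degree-$2$ shift of the central $\mathbb{G}_a$ from the commutator identity $e_{2\epsilon_1}\propto [e_{\epsilon_1-\epsilon_i},\,e_{\epsilon_1+\epsilon_i}]$ together with the automatic degree-$1$ shift of $\mathfrak{r}_u(P)$ on the socle filtration. The paper instead passes to the larger parabolic $\tilde P=\CC^\ast\cdot P$ and observes that $\CC^\ast$ acts with weight $1$ on the $\CC^{2n-2}$-part of $\mathfrak{r}$ and weight $2$ on the central $\CC$; once one knows that the socle layers $Q_i$ are single $\CC^\ast$-weight spaces, the shifts follow by equivariance. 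Your commutator argument is slightly more self-contained in that it does not presuppose the identification of the socle filtration with the $\CC^\ast$-weight filtration; the paper's route, on the other hand, yields that identification (and hence (3) and the explicit layer description) as a direct byproduct.

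For (1), your argument by contradiction with the maximality of the socle is essentially the same as the paper's (and arguably spelled out more carefully). For (3), both you and the paper ultimately rely on the $\CC^\ast$-weight grading coming from the Levi torus of $\bar P=\tilde P$; you package this as Loewy rigidity via Kostant plus induction, while the paper simply asserts that $\CC^\ast$ acts by the character $\lambda^{k-i+1}$ on $Q_i$. The paper is in fact rather terse at this point---it does not justify why successive socles of $V/V_{i-1}$ remain concentrated in a single $\CC^\ast$-weight---so your explicit flagging of this as ``the main obstacle'' and your outline via $V^{R_u(\bar P)}$ and induction adds value. Both arguments are complete once one checks (as you indicate) that no nonzero vector of $\CC^\ast$-weight strictly below the top of $V/F_{i-1}$ can be annihilated by the $\CC^{2n-2}$-part of $\mathfrak{r}_u(P)$.
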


\begin{proof}
We have to consider the slightly bigger parabolic subgroup $\tilde{P} \supset P$ given by matrices
\[
\left( \begin{array}{ccc}
\lambda     &   v^t J_{2n-2} g     &   \lambda^{-1}c\\
        0_{2n-2}   &     g          &                     \lambda^{-1} v  \\    
        0   &     0_{2n-2}^t          &                    \lambda^{-1}
\end{array}\right)
\]
so that $\tilde{P} = \CC^{\ast } \cdot P$. The filtration of $V$ as $\tilde{P}$-module is the same as the one as $P$-module, and
the quotients $Q_i$ are completely reducible modules for the reductive part $L= \CC^{\ast } \times \mathrm{Sp}_{2n-2} (\CC )$, but
the 
$\CC^{\ast }$-action will encode valuable information about the grading of the filtration.

\

The Lie algebra $\mathfrak{r}$ of $R_u (P)$ is 
\[
\CC^{2n-1} = \left\{  (v , c) \, | \, v\in \CC^{2n-2}, \; c\in \CC \right\} , \quad \left[ (v, c) , \: (v', c' ) \right] =
(0, \omega (v , v' ) ) \, .
\]
As an $L = \CC^{\ast }\times \mathrm{Sp}_{2n-2} (\CC )$-module, $\mathfrak{r}$ decomposes into a standard representation
$\CC^{2n-2}$ and
$\CC$ where $\CC^{\ast }$ acts via scalings by $\lambda$ in $\CC^{2n-2}$. Any
$\tilde{P}$-module $V$ (not necessarily coming from an $\mathrm{Sp}_{2n } (\CC )$-module) may be viewed as an
$\mathfrak{r}$-module where the structure map
\[
\mathfrak{r} \to \mathrm{End} (V, \: V)
\]
is $L$-\emph{equivariant}. If we have a two-step extension $\tilde{P}$-module
\[
0 \to S \to V \to Q \to 0
\]
with $S$ the maximal completely reducible submodule, we get an $L$-homomorphism
\[
\CC \oplus \CC^{2n-2} \to \mathrm{Hom} (Q, \: S)\, .
\]
Then for every irreducible $Q'$ in $Q$ there is an irreducible $S'$ in $S$ with a nonzero homomorphism
\[
\CC \oplus \CC^{2n-2} \to \mathrm{Hom} (Q', \: S')\, ,
\]
since $S$ is maximal completely reducible inside $V$, hence
\[
S' \subset Q' \otimes (\CC \oplus \CC^{2n-2})\, .
\]
We must have $S' \subset Q'\otimes \CC^{2n-2}$ for otherwise, by the structure of the Lie algebra $\mathfrak{r}$, $Q'$ is
annihilated by all of $\mathfrak{r}$.

\

Now suppose our $P$-module is the restriction of some irreducible $\mathrm{Sp}_{2n} (\CC )$-module $V$, so that the $P$-span of
the last (irreducible) $Q_l$ is the whole of $V$. Then the action of $\CC^{\ast }$ will be via some $\lambda^{-k}$ in $Q_l$, via
$\lambda^{-k+1}$ in $Q_{l-1}$ up to $\lambda^k$ in $Q_1$. This shows that the action of
$\mathfrak{r}$ preserves the grading in the sense that
\[
\CC^{2n-2}\cdot Q_i \subset Q_{i-1}\; \mathrm{and} \; \CC \cdot Q_i \subset Q_{i-2}\, ,
\]
which would not have been visible without the $\CC^{\ast }$-action. 
\end{proof}

As an example of how $R_u (P)$ acts let us
compute the filtrations for 
\[
\Sigma^{2, 0, \dots , 0}_0 \CC^N =\mathrm{Sym}^2 \CC^N \; \mathrm{and} \; \Sigma^{1,1,0, \dots , 0}_0 \CC^N = \Lambda^2_0 \CC^N\,
.
\]
Now looking at the expansion of 
\begin{gather*}
\Lambda^2 ( \CC_{(1)} + \CC^{N-2}  + \CC_{(2)})
\end{gather*}
we find for $\Lambda^2_0 \CC^N$ 
\[
Q_1 = \CC^{N-2}, \; Q_2 = \Lambda^2_0\CC^{N-2} + \CC , \; Q_3 = \CC^{N-2}\, .
\]
Comparing this with the expansion of $(\CC^{N-2} + \CC_{(1)} + \CC_{(2)})^{\otimes 2}$ we get for $\mathrm{Sym}^2 \CC^N$:
\[
Q_1 = \CC , \; Q_2 = \CC^{N-2}, \; Q_3 = \mathrm{Sym}^2 \CC^{N-2} + \CC , \; Q_4 = \CC^{N-2}, \; Q_5 =\CC \, .
\]

\

We need some criterion when a $P$-representation is generically free. One should compare the following with the related Lemma 5.6 in \cite{BBB10}.

\begin{lemma}\xlabel{lSpFree}
Let $V$ be a $P$-representation with a filtration as above with three steps:
\[
V_1 \subset V_2 \subset V_3 =V\, .
\]
So $Q_1 =V_1$ is the maximal completely reducible subrepresentation of $V$, $Q_2 = V_2 /V_1$ the maximal completely reducible
subrepresentation of $V/V_1$, and $Q_3 = V/V_2$ is also completely reducible. We assume for simplicity that $Q_3$ is irreducible
and $V = \langle P\cdot Q_3 \rangle $.\\
If $Q_3$ is an R-representation of
$\mathrm{Sp}_{2n-2} (\CC )$, and the action of $P$ on $V$ does not factor through $P/\mathbb{G}_a$, then $V$ is generically free
for
$P$.
\end{lemma}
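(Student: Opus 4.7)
The plan is to show the stabilizer $P_v$ is trivial for generic $v\in V$, by peeling off the three filtration pieces $Q_3, Q_2, Q_1$ in turn. Using the $\CC^{\ast}$-grading from Proposition \ref{pStructurePmodules}, write $v = v_1 + v_2 + v_3$ with each $v_i$ a generic representative of the $i$-th graded piece; note $Q_1\cong Q_3$ as $\mathrm{Sp}_{2n-2}(\CC)$-modules by Proposition \ref{pStructurePmodules}(3).

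First I would project $g\in P_v$ to $\bar g\in P/R_u(P) = \mathrm{Sp}_{2n-2}(\CC)$: since $g$ preserves the filtration, $\bar g$ stabilizes $v_3\in Q_3$, and the R-representation hypothesis forces $\bar g$ into the finite central ineffectivity kernel $Z$ of the action on $Q_3$. The inhomogeneous equations induced by $gv = v$ on $Q_1$ and $Q_2$, together with genericity of $v_1, v_2$ and the fact that $Z$ acts nontrivially on at least one of the lower quotients (else $V$ would not be $L$-faithful), rule out any nontrivial $\bar g\in Z$, reducing us to $g\in R_u(P)$.

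Next I would argue on the Lie algebra of $R_u(P)$. By Proposition \ref{pStructurePmodules}, the action of $\mathfrak{r} = \CC^{2n-2}\oplus\CC$ on $V$ is encoded in $L$-equivariant maps
\[
\phi\colon \CC^{2n-2}\otimes Q_3 \to Q_2,\qquad \psi\colon \CC\otimes Q_3 \to Q_1,\qquad \phi'\colon \CC^{2n-2}\otimes Q_2 \to Q_1.
\]
The hypothesis that the central $\mathbb{G}_a$ acts nontrivially gives $\psi\neq 0$, and Proposition \ref{pStructurePmodules}(1) combined with $V = \langle P\cdot Q_3\rangle$ gives $\phi\neq 0$. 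Schur's lemma identifies $\psi(c)$ with $c\cdot\iota$ for a fixed embedding $\iota\colon Q_3\hookrightarrow Q_1$; similarly, since $\CC^{2n-2}$ is an irreducible $\mathrm{Sp}_{2n-2}$-module and $\phi\neq 0$, $\phi(u,\cdot)\colon Q_3\to Q_2$ is nonzero for every $u\neq 0$. For $(u,c)\in\mathfrak{r}$ with $\exp(u,c)v = v$, the $Q_2$-equation reads $\phi(u)(v_3)=0$, so $u\in K(v_3) := \ker\phi(\cdot)(v_3)$, and the $Q_1$-equation reads $c\,\iota(v_3) = -\phi'(u)(v_2)$. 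The Heisenberg commutation $[X,X']v_3 = \omega(u,u')\,\iota(v_3)$ applied to $u,u'\in K(v_3)$ forces $\omega(u,u')=0$, so $K(v_3)$ is $\omega$-isotropic and $\dim K(v_3)\le n-1$. A dimension count on the linear map $\phi'(\cdot)(v_2)\colon K(v_3)\to Q_1$, using that $\dim Q_1 = \dim Q_3$ is large compared to $n-1$ for R-representations and that $v_2$ is generic, then rules out $\phi'(u)(v_2)\in\CC\cdot\iota(v_3)$ for any $u\in K(v_3)\setminus\{0\}$; hence $u=0$, and then $c\,\iota(v_3)=0$ gives $c=0$. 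Since $R_u(P)$ is connected unipotent, triviality of the Lie algebra stabilizer gives $(R_u(P))_v = 1$.

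The main obstacle is the genericity argument in the last step: establishing that $K(v_3)$ is generically either trivial or at least that its image under $\phi'(\cdot)(v_2)$ misses the line $\CC\cdot\iota(v_3)$. The Heisenberg isotropy bound $\dim K(v_3)\le n-1$ is what makes this tractable, since the constraint $\phi'(u)(v_2)\in\CC\,\iota(v_3)$ has codimension $\dim Q_1-1$, which greatly exceeds $n-1$ when $Q_3$ is an R-representation. The corresponding first-step ruling out of nontrivial $\bar g\in Z$ is a simpler instance of the same kind of genericity check on $v_1, v_2$, and should be verified alongside.
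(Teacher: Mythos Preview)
Your overall architecture is reasonable and your Heisenberg observation---that $K(v_3)=\{u\in\CC^{2n-2}:\phi(u)(v_3)=0\}$ is $\omega$-isotropic because $[u,u']\cdot v_3=\omega(u,u')\iota(v_3)$ while $u\cdot(u'\cdot v_3)=0$---is correct and elegant. It is, however, \emph{strictly weaker} than what is needed, and your final dimension count does not close the gap.

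The issue is step 4. You assert that ``the constraint $\phi'(u)(v_2)\in\CC\iota(v_3)$ has codimension $\dim Q_1-1$'', but this codimension is only $\operatorname{rank}\phi'(u)-1$ (or $\operatorname{rank}\phi'(u)$, depending on whether $\iota(v_3)$ lies in the image). You only know $\phi'(u)\neq 0$ for $u\neq 0$; there is no reason the rank of $\phi'(u)$ should be comparable to $\dim Q_1$, and in particular no reason it should exceed $\dim K(v_3)$. So the incidence variety
\[
\{([u],v_2,v_3):\phi(u)(v_3)=0,\ \phi'(u)(v_2)\in\CC\iota(v_3)\}
\]
could a priori dominate $Q_2\times Q_3$: your isotropy bound controls the $u$-direction, but you have no compensating control in the $v_2$-direction.

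The paper avoids this entirely by proving the stronger fact that $K(v_3)=0$ for generic $v_3$ (equivalently, that $U=\CC^{2n-2}$ already acts generically freely on $V/V_1$). The argument is geometric: assuming $K(v_3)\neq 0$ generically, the incidence $\mathcal K=\{([t],q):t\cdot q=0\}\subset\PP(\mathfrak u)\times Q_3$ dominates $Q_3$, forcing the image bundle $\mathcal Q$ of $\tau\colon\mathcal O\otimes Q_3\to\mathcal O(1)\otimes Q_2$ to be a homogeneous bundle on $\PP^{2n-3}$ of rank $\le 2n-3$. Such bundles are classified ($\mathcal T(-1)$, $\Omega^1(2)$, or sums of line bundles, up to twist), and matching $H^0$ shows that $Q_3$ would then contain an E-representation, contradicting the hypothesis. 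Once $K(v_3)=0$, the rest is immediate. Your isotropy bound is a pleasant shortcut that unfortunately stops one step short; to salvage your route you would need an independent lower bound on $\operatorname{rank}\phi'(u)$ for $u\in K(v_3)$, and none is available from the hypotheses.
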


\begin{proof}
By assumption $\mathbb{G}_a$ acts via nontrivially translating $Q_3$ into $Q_1$ whence $V/\mathbb{G}_a$ is an affine (or vector)
bundle over the two-step extension $V/V_1$ in which $P/\mathbb{G}_a$ acts. Hence we just have to show that $G:= P/\mathbb{G}_a$
acts generically freely in $\tilde{V}:= V/V_1$. We will first investigate when the unipotent radical $U:=\CC^{2n-2}$ of $G$ might
not act generically freely in $\tilde{V}$: in that case we get for a generic $q \in Q_3$ a nonzero element $t$ in the Lie algebra
$\mathfrak{u}$ of $U$ with $t\cdot q= 0$. The variety
\[
\mathcal{K}:=\left\{ (t, \: q)\in \PP (\mathfrak{u} ) \times Q_3 \, |\, t\cdot q =0 \right\}
\]
fibres over $Q_3$ and $\PP (\mathfrak{u}) \simeq \PP^{2n-3}$ and is a homogeneous vector bundle over $\PP^{2n-3}$ by
$\mathrm{Sp}_{2n-2} (\CC )$-equivariance. Its fibre dimension is at least $\dim Q_3 - (2n-3)$ since $\mathcal{K}$ dominates
$Q_3$. Hence the cokernel $\mathcal{Q}$ of the map $\mathcal{K} \to \mathcal{O} \otimes Q_3$, i.e. the image of the map
\[
\tau \, :\, \mathcal{O} \otimes Q_3 \to \mathcal{O} (1) \otimes Q_2\, ,
\]
is a
homogeneous vector bundle over
$\PP^{2n-3}$ of rank at most $2n-3$. These are easy to classify as they arise via representations of the stabilizer parabolic
subgroup inside $\mathrm{Sp}_{2n-2} (\CC )$ on $\PP^{2n-3}$: they are
\[
\mathcal{T} , \; \Omega^1 , \; \mathcal{S}:= \mathcal{O} (-1)^{\perp } / \mathcal{O} (-1), 
\]
or twists of these by $\mathcal{O} (k)$, or direct sums of bundles $\mathcal{O}(j)$; here $\mathcal{O}(-1)^{\perp}$ is the
perpendicular with respect to the symplectic form to $\mathcal{O} (-1) \subset V$. Since $\tau$ is nonzero on $H^0$-level, both
$\mathcal{Q}$ and
$\mathcal{Q}^{\vee }(1)$ have sections. So we can just have that $\mathcal{Q}$ is one of
\[
\mathcal{T} (-1), \: \Omega^1 (2), \: X\otimes \mathcal{O} \oplus Y \otimes \mathcal{O}(1)
\]
where $X$ and $Y$ are $\mathrm{Sp}_{2n-2}(\CC )$-representations the sum of whose dimensions does not exceed $2n -3$, so sums of
trivial representations. Here $H^0 (\mathcal{T} (-1) ) = \CC^{2n-2}$, $H^0 (\Omega^1 (2) ) = \Lambda^2 \CC^{2n-2}$. Note also that
\[
\mathcal{T}(-1) \simeq V/\mathcal{O}(-1), \: \mathcal{O}(1) = V/\mathcal{O}(-1)^{\perp }\, ,
\]
one has the exact sequence
\[
0 \to \mathcal{S} \to \mathcal{T}(-1) \to \mathcal{O} (1) \to 0
\]
and by the Borel-Bott-Weil Theorem $\mathcal{S}$ has no sections (and it is self-dual; however, $H^0 (\mathcal{S} (1)) \simeq
\Lambda^2_0 \CC^{2n-2}$, in agreement with the previous exact sequence).\\
Hence, by equivariance, if $U$ acts not generically freely on $V/V_1$, we obtain that $Q_3$ contains an E-representation for
$\mathrm{Sp}_{2n-2} (\CC )$, contradiction.\\
Hence it remains to see that $\mathrm{Sp}_{2n-2} (\CC )$ acts generically freely on the quotient $(V/V_1)/U$ which is
birationally a vector bundle over $Q_3$. Since $Q_3$ is an R-representation of $\mathrm{Sp}_{2n-2}(\CC )$, it suffices to note
that the center $\{ \pm 1 \}$ of $\mathrm{Sp}_{2n-2}(\CC )$ acts nontrivially on $Q_2 + Q_3$, since $Q_2 \subset
\CC^{2n-2}\otimes Q_3$, and this will continue to hold for the quotient $(V/V_1)/U$, provided $\dim Q_2 > 2n -2$; the latter is
satisfied because otherwise $Q_3$ would be an E-representation.
\end{proof}

We are now in a position to prove

\begin{theorem}\xlabel{tSp}
Suppose that $n\ge 4$. Let $V=\Sigma_0^{\lambda_1, \dots , \lambda_n }\CC^{2n}$ be an irreducible R-representation of
$\mathrm{Sp}_{2n} (\CC )$. Then
$V/P$ is rational. Hence, if
$V$ is already generically free for $\mathrm{Sp}_{2n} (\CC )$, then $V/\mathrm{Sp}_{2n}(\CC )$ is stably rational of level $2n$,
and if
$(-1)$ acts trivially in $V$, then a fortiori $(V\oplus \CC^{2n})/\mathrm{Sp}_{2n} (\CC )$ is stably rational of level $2n$.
\end{theorem}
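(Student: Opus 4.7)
By Remark~\ref{rSectionSp}, $(V\oplus\CC^{2n})/\mathrm{Sp}_{2n}(\CC) = V/P$, so granting generic freeness (or using the $\CC^{2n}$ summand when $-1$ acts trivially) it is enough to prove that $V/P$ is rational. I proceed by analyzing the $P$-filtration
$0\subset V_1 \subset \cdots \subset V_l = V$
of Proposition~\ref{pStructurePmodules}. By Remark~\ref{rPmodules} together with the symmetry $Q_i = Q_{l+1-i}$ of part~(3), the bottom and top graded pieces $Q_1\cong Q_l$ are both isomorphic, as $\mathrm{Sp}_{2n-2}(\CC)$-modules, to the irreducible representation $W := \Sigma_0^{(\lambda_2,\dots,\lambda_n)}\CC^{2n-2}$. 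The argument then splits according to whether $W$ is an R- or E-representation for $\mathrm{Sp}_{2n-2}(\CC)$.

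The main case is $W$ an R-representation. I truncate to the three-step $P$-quotient $T := V/V_{l-3}$, with graded pieces $Q_{l-2}, Q_{l-1}, Q_l$. By Proposition~\ref{pStructurePmodules}(2), the central $\mathbb{G}_a\subset R_u(P)$ maps the irreducible $Q_l = W$ isomorphically into an isotypic copy inside $Q_{l-2}$, so the $P$-action on $T$ does not factor through $P/\mathbb{G}_a$; by part~(1) of the same proposition, the $P$-span of $Q_l$ fills $T$. Hence Lemma~\ref{lSpFree} applies and shows that $T$ is $P$-generically free. The no-name lemma, applied to the $P$-equivariant vector bundle $V\to T$ of fibre $V_{l-3}$, then yields
\[
V/P \;\sim_{\mathrm{bir}}\; V_{l-3}\times (T/P),
\]
and it remains to show $T/P$ is rational. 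For this I take a general $q\in Q_l$: since $W$ is R, its $\mathrm{Sp}_{2n-2}(\CC)$-stabilizer reduces to the (finite central) ineffectivity kernel $Z$, and the affine translations by $R_u(P)$ can be used to annihilate both the $Q_{l-1}$- and $Q_{l-2}$-components of a generic element of $T$, producing a $(P,Z)$-section inside $T$. Rationality of $T/P$ follows.

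The E-case has $W\in\{\CC, \CC^{2n-2}, \Lambda^2_0\CC^{2n-2}\}$, which (for $n-1\ge 3$) is the complete list from the table of \cite{Po-Vi}; this pins $\lambda$ to a short family of explicit highest weights. For each such $V$ I replace the top three-step window by a lower three-step window $V_{k}/V_{k-3}$ whose top quotient is instead an R-representation (which is possible because $\lambda_1$ is large enough to make the filtration long) and repeat the R-argument, or else produce a direct \emph{ad hoc} section. The main obstacle is point~(a): verifying, in the R-case, that translation by $R_u(P)$ genuinely kills the lower components of a generic point of $T$. This amounts to surjectivity at the generic $q$ of the evaluation maps $\CC^{2n-2}\otimes Q_l\to Q_{l-1}$ and $\mathrm{Lie}(\mathbb{G}_a)\otimes Q_l\to Q_{l-2}$, which must be extracted from the branching description for $\mathrm{Sp}_{2n}(\CC)\downarrow \mathrm{SL}_2(\CC)\times\mathrm{Sp}_{2n-2}(\CC)$ underlying Proposition~\ref{pStructurePmodules}; the secondary obstacle is the finite but nontrivial bookkeeping of the E-cases, each of which must be handled individually.
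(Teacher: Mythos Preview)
Your argument for the rationality of $T/P$ via a $(P,Z)$-section is where the proof breaks. You want translations by $R_u(P)$ to annihilate the $Q_{l-1}$- and $Q_{l-2}$-components of a generic point of $T$, but this is impossible for dimension reasons: $\dim R_u(P)=2n-1$, whereas $\dim Q_{l-1}$ alone already exceeds $2n-2$. Indeed, if every irreducible summand of $Q_{l-1}$ had dimension $\le 2n-2$ it would be a sum of copies of $\CC$ and $\CC^{2n-2}$, and then $Q_l\subset Q_{l-1}\otimes\CC^{2n-2}$ would force $Q_l$ to be an E-representation. So the evaluation map $\CC^{2n-2}\to Q_{l-1}$, $v\mapsto v\cdot q$, is never onto for generic $q$, and no section over $Q_l$ exists. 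Your ``obstacle~(a)'' misidentifies what is needed: surjectivity of the representation-level map $\CC^{2n-2}\otimes Q_l\to Q_{l-1}$ is irrelevant; it is the pointwise evaluation at a single $q$ that would have to be onto, and it is not. A direct dimension count confirms this: a $(P,Z)$-section must have dimension $\dim T-\dim P$, but $\dim Q_{l-1}+\dim Q_{l-2}\ge\dim P$ (since $Q_{l-2}$ already contains a copy of the R-representation $Q_l$ via the $\mathbb{G}_a$-shift), so the section cannot sit inside $Q_l$.

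The paper never tries to prove $T/P$ rational directly. The point it uses is that $P$, being an extension of $\mathrm{Sp}_{2n-2}(\CC)$ by a unipotent group, is \emph{special}: for any $P$-generically free representation $U$ the torsor $U\to U/P$ is Zariski locally trivial, so $U\sim (U/P)\times P$ birationally and $U/P$ is automatically stably rational of level $\dim P$. Combined with the no-name identification $V/P\sim W\times (V/W)/P$, rationality of $V/P$ follows once one checks $\dim W\ge\dim P$. The paper secures this by taking $W$ to be the negative half of the symmetric filtration (Proposition~\ref{pSpR}); your fibre $V_{l-3}$ would work too when $l\ge 5$, but for exterior powers $l=3$ and $V_{l-3}=0$, so those, together with the E-cases, need the substantial case-by-case work of Propositions~\ref{pSpE}, \ref{pExteriorPowerSp}, \ref{pExceptionSp}. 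Your E-list also omits $\mathrm{Sym}^2\CC^{2n-2}$ and the sporadic $\Lambda^3_0\CC^6$, $\Lambda^4_0\CC^8$.
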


The main strategy will be to find -if possible- a $P$-generically free quotient $V/V_0$ of the $P$-module $V$ such that $\dim V_0
\ge
\dim 
\mathrm{Sp}_{2n-2} (\CC ) + \dim R_u (P)$. Note that
\[
\dim \mathrm{Sp}_{2m} (\CC ) = 2m^2 +m\, .
\]
This will imply the assertion since the symplectic group and with it $P$ is special. To prove generic
freeness of
$V/V_0$ we have to distiguish two cases: if $Q_l = \Sigma^{\lambda_2, \dots , \lambda_{n}}_0 \CC^{2n-2}$ is an R-representation,
we try to use Lemma \ref{lSpFree} using the structure of $P$-modules summarized in Proposition \ref{pStructurePmodules}; if $Q_l$ is
an E-representation, we try to find a generically free quotient by an ad hoc method in each case. 

\begin{remark}\xlabel{rERepresentationsSp}
For $n\ge 3$, the E-representations for $\mathrm{Sp}_{2n} (\CC )$ are 
\[
\CC, \; \CC^{2n}, \; \mathrm{Sym}^2 \CC^{2n} = \Sigma^{2, 0, \dots , 0}_0 \CC^{2n}, \; \Lambda^2_0 \CC^{2n} =
\Sigma^{1,1,0,\dots, 0}_0 \CC^{2n}
\]
and in addition
\[
\Lambda^3_0 \CC^6 = \Sigma^{1,1,1}_0 \CC^6 , \; \Lambda^4_0 \CC^8 = \Sigma^{1,1,1,1}_0 \CC^8 \, .
\]
An irreducible representation $V$ of dimension $< \dim\mathrm{Sp}_{2n} (\CC )$ must be an E-representation. Hence for $n\ge 3$
these $V$'s are
\[
\CC, \; \CC^{2n}, \; \Lambda^2_0 \CC^{2n} \; (\dim \Lambda^2_0 \CC^{2n} = (n-1)(2n +1) )
\] 
and
\[
\Lambda^3_0 \CC^6 \; (\dim = 14 ) \, .
\]
\end{remark}

It is now easy to obtain

\begin{proposition}\xlabel{pSpR}
Theorem \ref{tSp} is true if $V=\Sigma^{\lambda_1, \dots , \lambda_n }_0 \CC^{2n}$ is such that $Q_l= \Sigma^{\lambda_2, \dots
,\lambda_{n}}_0 \CC^{2n-2}$ is an R-representation for $\mathrm{Sp}_{2n-2} (\CC )$, and $V$ is not an exterior power.
\end{proposition}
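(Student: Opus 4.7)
The plan follows the strategy articulated just before the statement: I aim to exhibit a $P$-submodule $V_0 \subset V$ with $\dim V_0 \ge \dim P = 2n^2 - n$ such that $V/V_0$ is $P$-generically free; rationality of $V/P$ then follows because $P$ is special. The natural choice is $V_0 := V_{l-3}$ in the filtration of Proposition \ref{pStructurePmodules}, so that $V/V_0$ becomes a three-step $P$-extension with successive quotients $Q_{l-2}, Q_{l-1}, Q_l$. The non-exterior-power hypothesis is what guarantees that $l \ge 3$ and that the filtration is rich enough for the dimension count described below.

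I would then apply Lemma \ref{lSpFree} to $V/V_0$. The top piece $Q_l = \Sigma^{(\lambda_2, \dots , \lambda_n)}_0 \CC^{2n-2}$ is irreducible, by Remark \ref{rPmodules} combined with the symmetry $Q_i = Q_{l+1-i}$ of Proposition \ref{pStructurePmodules}(3), and is an R-representation by assumption. Since $V$ is irreducible as an $\mathrm{Sp}_{2n}(\CC)$-module, one has $V = \langle P \cdot Q_l \rangle$ and hence also $V/V_0 = \langle P \cdot Q_l \rangle$. To check that the $P$-action on $V/V_0$ does not factor through $P/\mathbb{G}_a$, I would invoke Proposition \ref{pStructurePmodules}(2): the central $\mathbb{G}_a \subset R_u(P)$ shifts the filtration index by two, and because $V$ is not an exterior power, there is an irreducible summand of $Q_l$ whose $\mathbb{G}_a$-shift into $Q_{l-2}$ is nonzero. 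Granting this, Lemma \ref{lSpFree} yields $P$-generic freeness of $V/V_0$.

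The remaining, and main, obstacle is the dimension estimate $\dim V_{l-3} \ge 2n^2 - n$. By the symmetry $Q_i = Q_{l+1-i}$, this is equivalent to bounding $\dim Q_{l-2} + \dim Q_{l-1} + \dim Q_l$ from above by $\dim V - (2n^2 - n)$. The natural tool is the explicit $\mathrm{Sp}_{2n}(\CC) \downarrow \mathrm{Sp}_{2n-2}(\CC)$ branching formula recalled before Proposition \ref{pStructurePmodules}, together with the multiplicity product $m(\lambda, \mu) = \prod_{j=1}^n (x_j - y_j + 1)$. For R-representations that are not exterior powers the set of interlacing partitions $\mu$ is rich enough, and the multiplicities are large enough, that $\dim V$ dominates the contribution of the three outermost pieces; the generic case is very loose, but partitions $\lambda$ with only one or two nonzero parts at the boundary value $n = 4$ will likely need explicit case checks. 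Once the inequality is verified, $V/V_0$ being $P$-generically free combined with the speciality of $P$ produces the required rational vector bundle structure on $V/P$, and the assertions of Theorem \ref{tSp} follow.
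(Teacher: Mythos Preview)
Your overall architecture is exactly the paper's: take a $P$-subrepresentation of dimension at least $\dim P$, verify the quotient is $P$-generically free via Lemma~\ref{lSpFree}, and conclude by speciality of $P$. Your choice $V_0 = V_{l-3}$ is even slightly cleaner than the paper's (the paper takes the ``negative half'' $W$ with semisimplification $Q_{-k},\dots ,Q_{-1}$, which for $k\ge 3$ has more than three cokernel steps, so Lemma~\ref{lSpFree} is applied to the further quotient $V/V_{l-3}$ anyway). Your verification of the hypotheses of Lemma~\ref{lSpFree} is fine; note only that ``$l\ge 3$'' is too weak --- what the non-exterior-power assumption actually gives is $l=2k+1\ge 5$ (equivalently $k=\lambda_1\ge 2$), and you need this so that $V_{l-3}\supseteq V_2\supsetneq 0$.

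Where you go astray is the dimension estimate, which you call the ``main obstacle'' and propose to attack by the full branching formula with case checks at $n=4$. This is unnecessary. By the symmetry $Q_i\simeq Q_{l+1-i}$ your $V_{l-3}$ contains $Q_1\oplus Q_2\simeq Q_l\oplus Q_{l-1}$. Now $Q_l$ is an R-representation of $\mathrm{Sp}_{2n-2}(\CC)$ by hypothesis, so $\dim Q_l\ge \dim\mathrm{Sp}_{2n-2}(\CC)$ by Remark~\ref{rERepresentationsSp}. For $Q_{l-1}$, apply Proposition~\ref{pStructurePmodules}(1) at the bottom: since $Q_1$ is irreducible, for any irreducible summand $Q'\subset Q_2\simeq Q_{l-1}$ one has $Q_l\simeq Q_1\subset Q'\otimes\CC^{2n-2}$. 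If $\dim Q_{l-1}<2n-1$ then every such $Q'$ has dimension $<2n-1$, hence (for $n\ge 4$) is $\CC$ or $\CC^{2n-2}$; but then $Q'\otimes\CC^{2n-2}$ decomposes into E-representations, forcing $Q_l$ to be an E-representation, contradiction. Thus $\dim Q_{l-1}\ge 2n-1=\dim R_u(P)$, and
\[
\dim V_{l-3}\ \ge\ \dim Q_l+\dim Q_{l-1}\ \ge\ \dim\mathrm{Sp}_{2n-2}(\CC)+\dim R_u(P)\ =\ \dim P.
\]
No branching computation or case distinction is needed. This is precisely the argument the paper gives.
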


\begin{proof}
If $V$ is not an exterior power, then $V$ has a filtration which we will index more conveniently
\[
V_{-k} \subset \dots \subset V_0 \subset \dots \subset V_{k} = V, \;  Q_j = V_{j} /V_{j-1}
\]
whence $Q_j \simeq Q_{-j}$, \emph{and} $k$ \emph{is an integer} $\ge 2$. This follows because if $V$ is not an exterior power,
there will be a $\mu$ satisfying the double interlacing condition with respect to $\lambda$ such that if as above
\[
x_1 \ge y_1 \ge x_2 \ge \dots \ge x_n \ge y_n
\]
is a nonincreasing rearrangement of $\{ \lambda , \mu , 0\}$, then $x_i-y_i \ge 1$ for at least two values of $i$ or $x_i-y_i\ge
2$ for at least one value of $i$. The decomposition of the multiplicity space corresponding to $\mu$ as $\mathrm{SL}_2 (\CC
)$-module will then contain a summand $V(d)$ with $d\ge 2$, whence $k\ge 2$. Then the hypotheses of Lemma \ref{lSpFree} are
satisfied for the $P$-quotient representation $V/W$ of $V$ whose semisimplification consists of $Q_0, \dots , Q_k$ (the
nonnegative portion). Moreover $Q_{-k} = Q_k$ is an R-representation, hence, by Remark \ref{rERepresentationsSp}, has dimension
bigger or equal to $\dim \mathrm{Sp}_{2n-2} (\CC )$; $Q_{-k}$ is contained in $Q_{-k+1}\otimes \CC^{2n-2}$, hence the dimension
of $Q_{-k+1}$ cannot be smaller than $\dim R_u (P) = 2n-2 +1$: otherwise it would be a sum of $\CC$'s and $\CC^{2n-2}$, and
$Q_{-k}$ would be an E-representation. Hence $\dim W \ge \dim P$.
\end{proof}

\begin{proposition}\xlabel{pSpE}
Theorem \ref{tSp} is true if $V=\Sigma^{\lambda }_0 \CC^{2n}$ where one of the following holds
\begin{gather*}
(1)\quad\lambda = (a, 0, \dots , 0), \; a\ge 3 \, ,\\
(2)\quad\lambda = (b, 1, 0,  \dots , 0), \; b\ge 2 \, , \\
(3)\quad\lambda = (c, 2, 0, \dots , 0), \; c\ge 2\, , \\
(4)\quad\lambda = (d, 1,1,0, \dots , 0),  \; d\ge 2\, .
\end{gather*}
It is also true for 
\begin{gather*}
\lambda = (e, 1,1,1), \; e\ge 2, \quad \lambda = (f, 1,1,1,1 ), \; f\ge 2\, .
\end{gather*}
\end{proposition}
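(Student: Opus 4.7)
The plan is to execute the strategy described just before the statement: for each of the six listed highest weights, I will construct a $P$-subrepresentation $V_0 \subset V$ with
\[
\dim V_0 \ge \dim\mathrm{Sp}_{2n-2}(\CC) + \dim R_u(P) = \dim P
\]
such that the quotient $V/V_0$ is generically free as a $P$-module. Since $P$ is a special group, the principal bundle $V/V_0 \to (V/V_0)/P$ is Zariski locally trivial and admits a section, while $V \to V/V_0$ is a $P$-equivariant vector bundle with fibre $V_0$; by the no-name lemma, $V/P$ is birational to a vector bundle with fibre $V_0$ over the rational base $(V/V_0)/P$, hence rational.

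The reason Lemma \ref{lSpFree} cannot be applied directly is exactly that $Q_l = \Sigma^{(\lambda_2,\dots,\lambda_n)}_0\CC^{2n-2}$ is an E-representation of $\mathrm{Sp}_{2n-2}(\CC)$ in each case (by Remark \ref{rERepresentationsSp}): namely $\CC$, $\CC^{2n-2}$, $\mathrm{Sym}^2\CC^{2n-2}$, $\Lambda^2_0\CC^{2n-2}$ in (1)--(4), and $\Lambda^3_0\CC^6$, $\Lambda^4_0\CC^8$ in the two small-rank exceptions. Moreover, by the symplectic analogue of Remark \ref{rSpan}, the $P$-span of $Q_l$ is all of $V$, so any nontrivial $V/V_0$ inherits an E-rep top, and generic $P$-freeness must be proved in the presence of a nontrivial stabilizer in general position on the top quotient.

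For each case I would proceed as follows. First, use the double-interlacing branching rule to list all $\mathrm{Sp}_{2n-2}(\CC)$-isotypic summands of $V$ and, via Proposition \ref{pStructurePmodules}, distribute them among the filtration quotients $Q_{-k},\dots,Q_k$, keeping track of the $\mathrm{SL}_2$-multiplicity spaces so as to know which copies sit in which degree with respect to the central $\mathbb{G}_a \subset R_u(P)$. Second, identify the R-rep summands; by Remark \ref{rERepresentationsSp} only a bounded (case-dependent) list of $\mathrm{Sp}_{2n-2}(\CC)$-types yields E-rep summands, and the remaining summands are R-reps whose combined dimension grows with the linear parameter. Third, choose $V_0$ to be the $P$-subrepresentation of $V$ generated by a suitable collection of R-rep summands lying in a lower band of the filtration; the inequality $\dim V_0 \ge \dim P$ reduces to a direct arithmetic check, straightforward except for the smallest values of $a,b,c,d,e,f$. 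Fourth, verify generic $P$-freeness of $V/V_0$: the stabilizer $H \subset \mathrm{Sp}_{2n-2}(\CC)$ of a generic $q\in Q_l$ is explicitly known from \cite{Po-Vi}, and one checks, using the $\mathbb{G}_a$-shift of Proposition \ref{pStructurePmodules}(2) together with the abundance of R-rep summands at lower filtration steps, that $H \ltimes R_u(P)$ acts generically freely on the fibre of $V/V_0 \to Q_l$ over $q$.

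The main obstacle I expect is the fourth step, particularly in the exotic cases $\lambda = (e,1,1,1)$ with $n=4$ and $\lambda = (f,1,1,1,1)$ with $n=5$: there $H$ is a positive-dimensional reductive subgroup and the R-rep inventory in $V$ is smaller, so generic freeness of $V/V_0$ must be verified through an explicit weight-space calculation in the spirit of the $n=4$ analysis carried out in the proof of Proposition \ref{pMainReduction}. In the ``generic'' cases (1)--(4) with parameters large, by contrast, the R-rep summands overwhelm the E-rep ones and only the parity-of-shift bookkeeping of Proposition \ref{pStructurePmodules} needs to be made precise.
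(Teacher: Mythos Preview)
Your overall plan---find a $P$-submodule $V_0$ of dimension $\ge \dim P$ with $V/V_0$ generically $P$-free, then conclude by specialness of $P$---is exactly the paper's strategy, and your anticipation that the two small-rank exceptions require an explicit weight/stabilizer computation is borne out. Where your proposal diverges is in the mechanism for proving generic freeness in cases (2)--(4).

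You propose to compute the generic stabilizer $H\subset\mathrm{Sp}_{2n-2}(\CC)$ of a point of $Q_l$ and then verify directly that $H\ltimes R_u(P)$ acts freely on the fibre. In cases (2)--(4) this $H$ is large (a parabolic of $\mathrm{Sp}_{2n-2}$, respectively something of torus type, respectively $\prod\mathrm{SL}_2$), and carrying this out is not just bookkeeping. The paper avoids this entirely: after dividing by the central $\mathbb{G}_a$, it recognizes the relevant quotient $P/\mathbb{G}_a$-module as the \emph{restriction} to $\mathrm{Sp}_{2n-2}(\CC)\ltimes\CC^{2n-2}\subset\mathrm{ASL}_{2n-2}(\CC)$ of an $\mathrm{ASL}_{2n-2}(\CC)$-module, which in turn is a section of an explicit $\mathrm{SL}_{2n-1}(\CC)$-module (cases (2), (3)) or $\mathrm{SL}_{2n}(\CC)$-module (case (4)) already known to be generically free. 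So no stabilizer in $Q_l$ is ever analysed; the freeness is inherited from the ambient $\mathrm{SL}$-picture. Case (1) is handled by the observation that $Q_l=\CC$ forces an $\mathrm{Sp}_{2n-2}(\CC)$-section in $V/W$, which is close to what you suggest. For the two exceptional cases the paper does precisely what you predict: it identifies the stabilizer in general position ($\mathrm{SL}_3(\CC)$ for $\Lambda^3_0\CC^6$, and $(\ZZ/2\ZZ)^7$ for $\Lambda^4_0\CC^8$, the latter via Vinberg's $\theta$-group description of $E_6\subset E_7$) and checks freeness by an explicit weight-space argument.
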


\begin{proof}
We will filter $V$ as a $P$-module and use the notation from the proof of Proposition \ref{pSpR}:
\[
V_{-k} \subset \dots \subset V_0 \subset \dots \subset V_{k} = V, \;  Q_j = V_{j} /V_{j-1}, \; Q_j \simeq Q_{-j}\, , \; k\ge 1\, .
\]
Then we can compute the filtration, using Proposition \ref{pStructurePmodules}, in each of the cases (1)-(4) above.\\
In case (1) the integer $k$ is $\ge 3$ and we will have
\[
Q_{k-3} = (3)+(1) , \; Q_{k-2} = (2)+(0) , \; Q_{k-1} = (1) , \; Q_k = (0) \, .
\]
(here we write just the $\mu$'s and omit zeroes at the end to simplify the notation).\\
In case (2) we also have $k\ge 2$ and 
\[
Q_{k-2} = (3) + (2,1) + 2\times (1,0)  , \; Q_{k-1} = (2) + (1,1) + (0)  , \; Q_k = (1)
\]
(here the $(3)$ in $Q_{k-2}$ does not show up if $\lambda = (2,1,0, \dots , 0)$).\\
In case (3) we also get $k \ge 2$ and
\begin{eqnarray*}
Q_{k-2} =& (3,1) + (2,2) + 2\times (2) + (1,1) + (0)   ,\\
Q_{k-1} =& (3) + (2,1) + (1)  ,\\ 
Q_k =& (2)
\end{eqnarray*}
(where in the case $\lambda = (2,2,0,\dots , 0)$ all entries containing a $3$ must be omitted, and one copy of $(2)$ in $Q_{k-2}$).\\
In case (4) we get $k \ge 2$ as well and
\begin{eqnarray*}
Q_{k-2}=& (3,1) + (2,1,1) + (2) + 2\times (1,1),\\ Q_{k-1}=& (2,1) + (1,1,1) + (1),\\ Q_k =& (1,1)\, 
\end{eqnarray*}
(where the case $\lambda =(2,1,1,0 \dots , 0)$ again presents a deviation from the general pattern inasmuch as the entry $(3,1)$
is not present).

\begin{quote}
\textbf{Claim}: The $P$-quotient modules $V/W$ whose semisimplifications are the ones corresponding to the $Q$'s above are
$P$-generically free in all cases (1)-(4).
\end{quote}
Assuming the Claim for the moment, the assertion of the Proposition follows since the fibre $W$ always has dimension
$\ge \mathrm{Sp}_{2n-2} (\CC ) + \dim R_u (P)$: it contains $\mathrm{Sym}^2 \CC^{2n-2}$, $\CC^{2n-2}$ and $\CC$ in cases (1),
(2), and $(3)+(2)+(1)$ in case (3), and $(2,1)+(1,1)+(1)$ in case (4).\\
To prove the Claim in case (1) note that here the $P$-module $V/W$ has an $\mathrm{Sp}_{2n-2}(\CC )$-section which is
$(3) + (2) + (1) + (0)$, hence is generically free.\\
By \cite{Fu-Ha}, formula (25.39), we have for the restrictions of representations to $\mathrm{Sp}_m (\CC )$ from $\mathrm{SL}_m
(\CC )$:
\begin{gather*}
\Lambda^2 \CC^m = \Lambda^2_0 \CC^m + \CC, \; \Sigma^{2,1}\CC^m = \Sigma^{2,1}_0 \CC^m + \CC^m, \; \mathrm{Sym}^2 \CC^m =
\Sigma^{2,0,\dots , 0}_0 \CC^m ,\\
\Sigma^{2,2} \CC^m = \Sigma^{2,2}_0 \CC^m + \Lambda^2_0 \CC^m + \CC \, .
\end{gather*}
In cases (2) and (3) we thus see that, if we first divide by $\mathbb{G}_a$, the corresponding $P/\mathbb{G}_a$-space has a
quotient which is the restriction to $P/\mathbb{G}_a$, embedded naturally into $\mathrm{ASL}_{2n-2}(\CC ) = \mathrm{SL}_{2n-2}
(\CC )\ltimes \CC^{2n-2}$, of the following $\mathrm{ASL}_{2n-2}(\CC )$-representations: in case (2) a three-step extension with
quotients
\[
\Sigma^{2,1} \CC^{2n-2}, \quad \Sigma^{2,0}\CC^{2n-2} + \Lambda^2 \CC^{2n-2} , \quad \CC^{2n-2}
\]
in case (3) a three-step extension with quotients 
\[
\Sigma^{2,2} \CC^{2n-2}, \quad \Sigma^{2,1}\CC^{2n-2} , \quad \Sigma^2 \CC^{2n-2} \, .
\]
The first of these is a section for $\mathrm{ASL}_{2n-2}(\CC )$ in the $\mathrm{SL}_{2n-1}(\CC )$-representation
\[
\Sigma^{2,1}\CC^{2n-1} \times \CC^{2n-1}\, .
\]
For the second the same holds with $\Sigma^{2,2}\CC^{2n-1} \times \CC^{2n-1}$ instead. The latter two are generically free for
$\mathrm{SL}_{2n-1} (\CC )$.\\
In case (4), after dividing by the $\mathbb{G}_a$-action, we get a $P/\mathbb{G}_a$-space which has a quotient which is the
representation $R$ for $\mathrm{Sp}_{2n-2} (\CC )\ltimes \CC^{2n-2}$ 
\[
0\to \Lambda^3_0 \CC^{2n-2} \to R \to \Lambda^2_0 \CC^{2n-2} \to 0 \, .
\]
As in the proof of Proposition \ref{pLambda3} we see that $R$ is an $\mathrm{Sp}_{2n-2} (\CC )\ltimes \CC^{2n-2}$-section in the
restriction of the $\mathrm{SL}_{2n} (\CC )$-representation
\[
\Lambda^3 \CC^{2n} 
\]
to $\mathrm{ASL}_{2n-2}(\CC )$. This restriction is generically free, completing the proof for cases (1)-(4).

\

In the two exceptional cases we get $P$-module filtrations with
\begin{eqnarray*}
Q_{k-2} =& (2,1,0) + 2\times (1,1,1) + \dots ,\\ 
Q_{k-1}=& (2,1,1) + (1,1,0) ,\\
Q_{k} =& (1,1,1)\, ,
\end{eqnarray*}
in the case $\lambda = (e,1,1,1)$ and
\begin{eqnarray*}
Q_{k-2} =& (2,1,1,0) + 2\times (1,1,1,1) + \dots ,\\ 
Q_{k-1}=& (2,1,1,1) + (1,1,1,0) ,\\
Q_{k} =& (1,1,1,1)\, ,
\end{eqnarray*}
in the case $\lambda = (f, 1,1,1,1)$. We will establish the Claim above also in these cases. It is immediate to check that the
dimension of the corresponding $W$ is big enough in these cases. Hence it suffices to establish the generic freeness for
$\mathrm{Sp}_6 (\CC )\ltimes \CC^6$ of the extension
\[
0 \to \Lambda^2_0 \CC^6 + \Sigma^{2,1,1}_0 \CC^6  \to E_1 \to \Lambda^3_0 \CC^6 \to 0
\] 
and the generic freeness for $\mathrm{Sp}_8 (\CC )\ltimes \CC^8$ of the extension
\[
0 \to \Lambda^3_0 \CC^8 + \Sigma^{2,1,1,1}_0 \CC^8 \to E_2 \to \Lambda^4_0 \CC^8 \to 0 \, .
\]
The stabilizer in general position in the first case is $\mathrm{SL}_3 (\CC )$, in the second case it is $( \ZZ/2\ZZ )^7$. In the first case (cf. also the table in theorem 1, p.197ff. in \cite{Po80}), a general element in $\Lambda^3_0 \CC^6$ is \[ a e_1\wedge e_2 \wedge e_3 +  b f_1 \wedge f_2\wedge f_3\] 
(remember that $e_1, e_2, e_3$ resp. $f_1, f_2, f_3$ span maximal isotropic subspaces) with stabilizer 
\[
\left( \begin{array}{cc}  g & 0 \\ 0 & (g^t)^{-1} \end{array}\right)
\]
$g\in\mathrm{SL}_3 (\CC )$ (where we take the ordered basis $(e_1, e_2, e_3, f_1, f_2, f_3)$ here), so that with respect to $\mathrm{SL}_3 (\CC )$ the standard representation $\CC^6$ decomposes as $\CC^6 = \CC^3 + (\CC^3)^{\vee }$. As
\[
\Lambda^2_0 \CC^6 + \Sigma^{2,1,1}_0 \CC^6 = \Lambda^3_0 \CC^6 \otimes \CC^6
\]
(the left-hand side is contained in the right hand side, so it suffices to check dimensions), we get first
\[
\Lambda^3_0 \CC^6 = \CC + \CC + V(0, 2)  + V(2, 0)
\]
and hence we see that the decomposition with respect to $\mathrm{SL}_3 (\CC )$ of $\Lambda^2_0 \CC^6 + \Sigma^{2,1,1}_0 \CC^6 $ contains, complementary to $\CC^3 + (\CC^3)^{\vee }$, also 
\[
\mathrm{Sym}^3 \CC^3  + \CC^3\, .
\]
This is generically free for $\mathrm{SL}_3 (\CC )$ whence we conclude by remarking that the translations $\CC^6$ also act generically freely on the extension $E_1$ above (this follows also from the proof of Lemma \ref{lSpFree}).

\

It remains to consider the extension $E_2$ above. The basic reference is \cite{An}, but see also \cite{Katan}, \cite{Vin76}. The situation here can be described as follows. By the general theory of \cite{Vin76}, the Lie algebras $\mathfrak{g}$ of $E_7$ and $E_6$ are $\ZZ /2\ZZ$-graded, $\mathfrak{g} = \mathfrak{g}_0 \oplus \mathfrak{g}_1$, with
\begin{gather*}
E_7 = \mathfrak{sl}_8 \oplus \Lambda^4 \CC^8 , \\
E_6 = \mathfrak{sp}_8 \oplus \Lambda^4_0 \CC^8
\end{gather*}
and the action of $\mathfrak{g}_0$ on $\mathfrak{g}_1$ corresponds to two representations with a nontrivial stabilizer in general position which we have to consider together here: $\mathrm{SL}_8 (\CC )$ acting on $\Lambda^4 \CC^8$ and $\mathrm{Sp}_8 (\CC )$ acting on $\Lambda^4_0 \CC^8$. Above, under the natural embedding of the single summands $\mathfrak{sp}_8 \subset  \mathfrak{sl}_8$ and $\Lambda^4_0 \CC^8 \subset \Lambda^4 \CC^8$, $E_6$ is a subalgebra of $E_7$. By \cite{Vin76}, a Cartan algebra $\mathfrak{h} \subset \Lambda^4 \CC^8$ resp. $\mathfrak{h}' \subset \Lambda^4_0 \CC^8$ gives a section for the action of $\mathrm{SL}_8 (\CC )$ resp. $\mathrm{Sp}_8 (\CC )$ in these spaces. Note $\dim \mathfrak{h} =7$, $\dim \mathfrak{h}' =6$. We have to understand the stabilizers in general position explicitly, and in \cite{An} an explicit form of $\mathfrak{h}$ is given (\cite{An}, p. 149): \\
To achieve notational consistency with \cite{An}, let us take a basis $e_1, \dots , e_8$ of $\CC^8$ here such that for the symplectic structure given by $\omega$ we have: $\omega (e_i, e_{4+j})= \delta_{ij}$, $i, \: j \in \{ 1, \dots , 4\}$. For a permutation $(i_1\:  j_1\:  k_1\:  l_1\:  i_2\:  j_2\:  k_2\:  l_2)$ of $1, \dots , 8$ we use the abbreviation
\begin{gather*}
(i_1\:  j_1\:  k_1\:  l_1\:\mid \:  i_2\:  j_2\:  k_2\:  l_2): = e_{i_1}\wedge e_{j_1} \wedge e_{k_1} \wedge e_{l_1} + e_{i_2}\wedge e_{j_2} \wedge e_{k_2} \wedge e_{l_2}\, .
\end{gather*}
In \cite{An} it is noted that if for $(i_1\:  j_1\:  k_1\:  l_1\:\mid \:  i_2\:  j_2\:  k_2\:  l_2)$ and $(i_1' \:  j_1' \:  k_1' \:  l_1' \:\mid \:  i_2' \:  j_2' \:  k_2' \:  l_2' )$ the sets $i_1\:  j_1\:  k_1\:  l_1$ and $i_1' \:  j_1' \:  k_1' \:  l_1'$ have precisely two elements in common, then the corresponding elements in $\Lambda^4\CC^8 \subset E_7$ commute, and that hence the following seven pairwise commuting elements form a basis of a Cartan subalgebra $\mathfrak{h}$:
\begin{align*}
s_1 &= (1234\:\mid \:  5678), \quad 
s_2 &= (1357\:\mid \:  6824), \quad 
s_3 &= (1562\:\mid \:  8437), \\
s_4 &= (1683\:\mid \:  4752), \quad
s_5 &= (1845\:\mid \:  7263), \quad
s_6 &= (1476\:\mid \:  2385), \\
s_7 &= (1728\:\mid \:  3546) .
\end{align*}
According to the table given in \cite{Po-Vi}, the stabilizer in general position of $\mathrm{SL}_8 (\CC )$ in $\Lambda^4 \CC^8$ is -modulo the ineffectivity kernel of the action- given by $(\ZZ/2 \ZZ )^6$. We can now explicitly locate this $( \ZZ /2\ZZ )^6$ inside $\mathrm{SL}_8 (\CC )$: 
\begin{itemize}
\item
For each two element subset $i_0, \: j_0 \in \{ 1, \dots , 4 \}$ the substitution mapping $e_{i_0} \mapsto - e_{i_0}$, $e_{j_0} \mapsto -e_{j_0}$ and $e_{4+ i_0} \mapsto - e_{4+i_0}$, $e_{4+j_0 } \mapsto - e_{4+j_0}$, and keeping all other $e_j$ fixed, is in the stabilizer in general position. These substitutions generate a $(\ZZ /2\ZZ )^2$ (remark again: counted modulo the ineffectivity kernel). Moreover, we may also take
\[
e_i \mapsto - e_i, \: i=1, \dots , 4, \quad e_j \mapsto e_j, \: j = 5, \dots , 8
\]
which is another $\ZZ /2\ZZ$. 
\item
The permutations in $\mathfrak{S}_8$ of the form
\[
(12)(34)(56)(78), \; (13)(24)(57)(68)
\]
generate a Klein group $(\ZZ /2\ZZ )^2$ and induce maps which are in the stabilizer in general position. Likewise, the permutation
\[
(15)(26)(37)(48)
\]
generates another stabilizing $\ZZ /2 \ZZ$.
\end{itemize}
Thus we have found the appropriate $( \ZZ /2\ZZ )^6$ in this case. We will now use this knowledge to describe the stabilizer in general position for $\mathrm{Sp}_8 (\CC )$ on $\Lambda^4_0 \CC^8$ explicitly. First, we need to find an appropriate Cartan subalgebra $\mathfrak{h}' \subset \mathfrak{h}$. Note that a four-vector $\xi\in \Lambda^4 \CC^8$ will be in the space of traceless four-vectors $\Lambda^4_0 \CC^8$ if $\Lambda (\xi ) =0$ where $\Lambda$ is defined quite generally on pure $N$-vectors  $v_1 \wedge \dots \wedge v_N$ by
\begin{gather*}
\Lambda (v_1 \wedge\dots \wedge v_N ) = \sum_{i< j} (-1)^{i+j -1} \omega (v_i,\: v_j)v_1\wedge \dots \wedge \hat{v}_i \wedge \dots \wedge \hat{v}_j \wedge \dots \wedge v_N \, .
\end{gather*}
Thus we see that the following elements are in the kernel of $\Lambda$:
\[
s_5 - s_2,\:  s_3 -s_2,  \: s_1, \: s_4, \: s_6, \: s_7 \, ,
\]
hence they generate a Cartan subalgebra $\mathfrak{h}'$ (here we use that under the natural embeddings of the summands in $E_6$ into those of $E_7$ the bracket is preserved). By the table of  \cite{Po-Vi}, the stabilizer in general position for the action of $\mathrm{Sp}_8 (\CC )$ on $\Lambda^4_0 \CC^8$ is -modulo the ineffectivity kernel of the action- equal to $( \ZZ /2\ZZ )^6$. It is now easy to write down what it is explicitly, and also to see what we have to adjust in comparison to $\mathrm{SL}_8 (\CC )$. We have:
\begin{itemize}
\item
For each two element subset $i_0, \: j_0 \in \{ 1, \dots , 4 \}$ the substitution mapping $e_{i_0} \mapsto - e_{i_0}$, $e_{j_0} \mapsto -e_{j_0}$ and $e_{4+ i_0} \mapsto - e_{4+i_0}$, $e_{j_0 +4} \mapsto - e_{4+j_0}$, and keeping all other $e_j$ fixed, giving a $(\ZZ /2\ZZ )^2$, as above. Moreover, we may take in the symplectic case 
\[
e_i \mapsto - \sqrt{-1} e_i, \: i=1, \dots , 4, \quad e_j \mapsto \sqrt{-1} e_j, \: j = 5, \dots , 8
\]
which is another $\ZZ /2\ZZ$. 

\item
The permutations in $\mathfrak{S}_8$ of the form
\[
(12)(34)(56)(78), \; (13)(24)(57)(68)
\]
yield again a $( \ZZ /2 \ZZ )^2$ in the stabilizer. However, to get a symplectic transformation, we must now combine the two maps $(15)(26)(37)(48)$ and $e_i \mapsto - e_i, \: i=1, \dots , 4, \quad e_j \mapsto e_j, \: j = 5, \dots , 8$ into one: we take their composite which gives the remaining $\ZZ /2\ZZ$.
\end{itemize}

\

After these somewhat lengthy, but necessary preliminary considerations we can now address our question whether the extension
 \[
0 \to \Lambda^3_0 \CC^8 + \Sigma^{2,1,1,1}_0 \CC^8 \to E_2 \to \Lambda^4_0 \CC^8 \to 0 \, .
\]
is generically free for $\mathrm{Sp}_8 (\CC )\ltimes \CC^8$. In fact, we will show that this is already true for
\[
0 \to \Lambda^3_0 \CC^8  \to E'_2 \to \Lambda^4_0 \CC^8 \to 0 \, .
\]
The translations $\CC^8$ act generically freely again on $E_2'$ (e.g. use the proof of Lemma \ref{lSpFree}), so we proceed as before, find the decomposition of $\Lambda^8_0 \CC^8$ as an $H$ module where $H$ is the stabilizer in general position, so the above $( \ZZ /2\ZZ )^6$ extended by the center $\ZZ /2\ZZ $. Then $\Lambda^8_0 \CC^8$  splits as $\CC^8 +$(remainder), and we show that $H$ acts generically freely in the remainder. It will be sufficient to find the weight space decomposition of $\Lambda^3_0 \CC^8$ and to understand how the weights are grouped into Weyl group orbits as $H$ can be seen as a subgroup of the Weyl group of $\mathrm{Sp}_8 (\CC )$ extended by a maximal torus $T$. We have
\[
\Lambda^3_0 \CC^8 = \bigoplus_{\chi } V_{\chi } \oplus \bigoplus_{\chi'} V_{\chi'}
\]
where $\chi$ runs over all weights $(a,b,c, d)$ which have precisely one zero entry, and the other three entries are $+1$ or $-1$. They have multiplicity $1$, so $\dim \bigoplus_{\chi } V_{\chi }=32$. On the other hand, $\chi'$ runs over weights with precisely one nonzero entry $+1$ or $-1$, which have multiplicity $2$, so $\dim \bigoplus_{\chi'} V_{\chi'} = 16$. As a check note that $\dim \Lambda^3_0 \CC^8 = {8 \choose 3} - {8 \choose 1}= 48$. It will be convenient now to revert to our notational convention made at the beginning of this section, and use the basis $e_1, f_1, \dots , e_n, f_n$, $\omega (e_i, f_j ) = \delta_{ij}$, in $\CC^8$. It is obvious that the vectors
\begin{gather*}
x_i \wedge x_j \wedge x_k , \quad x \in \{ e, \: f \} , \;  1 \le i < j< k \le 4
\end{gather*}
are annihilated by $\Lambda$, so are in $\Lambda^3_0 \CC^8$, and form a basis of $\bigoplus_{\chi } V_{\chi }$. Likewise, the vectors
\begin{gather*}
x_i \wedge e_j \wedge f_j - x_i \wedge e_k \wedge f_k, \quad x \in \{ e, \: f \} , \; \\  1 \le i  \le 4, \: j, k\in \{ 1, \dots , 4\} -\{ i\}, \; j\neq k 
\end{gather*}
are annihilated by $\Lambda$ and generate $\bigoplus_{\chi'} V_{\chi'}$ (and are a basis if e.g. we require that $j$ is the smallest element in the set $\{ 1, \dots , 4\} -\{ i\}$ and $k$ any one of the remaining two elements; this also explains multiplicity $2$). The space $\CC^8$ has also weights $\chi'$ as above and a decomposition for $N(T)$ (the normalizer of a maximal torus in $\mathrm{Sp}_8 (\CC )$)
\[
\CC^8 = \bigoplus_{\chi' } \tilde{V}_{\chi'}\, ,
\]
but the $\tilde{V}_{\chi'Ê}$ are one-dimensional. In fact, $\bigoplus V_{\chi'}$ is not just two copies of $\CC^8$ as $N(T)$-module, but this is true as $H$-module: the submodule spanned by
\begin{gather*}
x_1 \wedge (e_2 \wedge f_2) - x_1 \wedge (e_3\wedge f_3), \quad x_2 \wedge (e_1 \wedge f_1) -  x_2 \wedge (e_4\wedge f_4), \\
x_3 \wedge (e_4 \wedge f_4) - x_3 \wedge (e_1\wedge f_1), \quad x_4 \wedge (e_3 \wedge f_3) -  x_4 \wedge (e_2\wedge f_2)
\end{gather*}
is $H$-invariant where $x\in \{ e, \: f\}$, but not $N(T)$-invariant. Thus we see that it will be sufficient to show that the $32$-dimensional $N(T)$-summand $\bigoplus V_{\chi }$ of $\Lambda^3_0 \CC^8$ consisting of the multiplicity $1$ weight spaces is generically free, i.e. faithful, for $H$. We can see this in two steps: the part of  $H$ generated by the elements in $T$  (corresponding to the maps given in the first item above) acts generically freely. Together with the center this is a $( \ZZ / 2\ZZ )^4$. A way to see this immediately is to note that: (1) The action of the $(\ZZ / 2\ZZ )^3$ generated by all even sign changes among the $e_i$ and corresponding sign changes in the $f_i$ is generically free: it is generated by
\begin{gather*}
\iota_1\, : \, e_1\mapsto -e_1, \; e_2 \mapsto -e_2 \, , \;  f_1\mapsto -f_1, \; f_2 \mapsto -f_2\\
\iota_2 \, :\, e_1\mapsto -e_1, \; e_3 \mapsto -e_3 \, , \; f_1\mapsto -f_1, \; f_3 \mapsto -f_3 \\
\iota_3\, :\, e_1\mapsto -e_1, \; e_4 \mapsto -e_4 , \;  f_1\mapsto -f_1, \; f_4 \mapsto -f_4\, ,
\end{gather*}
where the $e_i$ and $f_j$ which do not show in the line of one $\iota_k$ above are fixed under the respective $\iota_k$. But now there is always a $e_{i_1}\wedge e_{i_2} \wedge e_{i_3}$ which is fixed under $\iota_j$ and $\iota_k$, but mapped to its negative under $\iota_l$, $\{ j, k, l\} = \{ 1,2,3\}$; hence the assertion. And (2): the element $\mathrm{diag} (\sqrt{-1}, \dots , \sqrt{-1}  , -\sqrt{-1}, \dots , -\sqrt{-1})$ acts generically freely on these $( \ZZ /2 \ZZ )^3$ orbits. 
\\
 Secondly, the group $(\ZZ /2\ZZ )^3$ generated by the permutations in $\mathfrak{S}_8$ given above, acts on the weights $(a, b, c, d)$ as the Klein four-group permuting $a,b,c,d$ and an overall sign change $(a,b,c,d) \mapsto (-a, -b, -c, -d)$. This has no ineffectivity kernel for the weights in the Weyl orbit corresponding to the $\chi$. Thus the action of $(\ZZ /2\ZZ)^3$ on the orbits of $(\ZZ / 2\ZZ )^4$ in $\bigoplus V_{\chi }$ is also faithful, which was to be shown.
\end{proof}

To complete the proof of Theorem \ref{tSp} it suffices to show the next two propositions.

\begin{proposition}\xlabel{pExteriorPowerSp}
Theorem \ref{tSp} is true if $V=\Sigma^{1, \dots , 1, 0, \dots , 0}_0 \CC^{2n}$ is an exterior power other than 
\[
\Lambda^5_0 \CC^{10} \, .
\]
\end{proposition}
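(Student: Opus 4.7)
The plan is to follow the strategy outlined before Theorem \ref{tSp} and used in Propositions \ref{pSpR} and \ref{pSpE}: compute the $P$-filtration of $V = \Lambda^k_0 \CC^{2n}$, exhibit a sub-$P$-module $V_0 \subset V$ of dimension $\ge \dim P = 2n^2 - n$ such that $V/V_0$ is $P$-generically free, and conclude rationality from specialness of $P$. The filtration is straightforward: for $\lambda = (1^k, 0^{n-k})$ the double interlacing condition forces $\mu$ to have $k-2$, $k-1$, or $k$ ones, with multiplicities $1, 2, 1$ respectively (from $\prod(x_j - y_j + 1)$). The $\mathrm{SL}_2$-factor in the branching yields $V_1 = \CC^2$ on the middle summand and $V_0 = \CC$ on the outer ones, so by Proposition \ref{pStructurePmodules} we get a symmetric three-step $P$-filtration
\[
0 \subset V_{-1} \subset V_0 \subset V_1 = V, \quad Q_{\pm 1} = \Lambda^{k-1}_0 \CC^{2n-2}, \quad Q_0 = \Lambda^{k-2}_0 \CC^{2n-2} \oplus \Lambda^k_0 \CC^{2n-2}.
\]

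For $k \geq 4$ with $(k,n) \neq (5,5)$, the top quotient $Q_1 = \Lambda^{k-1}_0 \CC^{2n-2}$ is an R-representation of $\mathrm{Sp}_{2n-2}(\CC)$ by Remark \ref{rERepresentationsSp}; the other putative exception $(k,n) = (4,4)$ corresponds to $V = \Lambda^4_0 \CC^8$, which is itself an E-representation and hence outside the scope of Theorem \ref{tSp}. The central $\mathbb{G}_a$ maps $Q_1 \to Q_{-1}$ by a non-zero Schur scalar, the two being isomorphic irreducibles and the map being non-zero because $V$ is indecomposable as a $P$-module. Thus Lemma \ref{lSpFree} applies to the three-step filtration above and shows $V$ is $P$-generically free. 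Taking $V_0$ to be the middle filtration step, a routine binomial estimate gives $\dim V_0 = \dim V - \dim \Lambda^{k-1}_0 \CC^{2n-2} \geq 2n^2 - n$ in all the relevant cases, so the strategy outlined before Theorem \ref{tSp} yields rationality of $V/P$.

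The case $k = 3$ is the main obstacle because $Q_{\pm 1} = \Lambda^2_0 \CC^{2n-2}$ is always an E-representation, so Lemma \ref{lSpFree} cannot be invoked as a black box. Generic freeness of $V$ for $P$ is, however, automatic: since $V = \Lambda^3_0 \CC^{2n}$ is an R-representation of $\mathrm{Sp}_{2n}(\CC)$, the generic $\mathrm{Sp}_{2n}$-stabilizer lies in the centre $\{\pm 1\}$, which does not meet $P = \mathrm{Stab}(e_1)$. I would then run the proof of Lemma \ref{lSpFree} by hand: after dividing out the central $\mathbb{G}_a$, the residual $\mathbb{G}_a^{2n-2} \subset R_u(P)$ acts on $V/V_{-1}$ via the wedge--contraction map $\CC^{2n-2} \otimes \Lambda^2_0 \CC^{2n-2} \to \CC^{2n-2} \oplus \Lambda^3_0 \CC^{2n-2}$, which is injective on a generic fibre, yielding unipotent generic freeness. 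For $n \geq 5$, $\Lambda^3_0 \CC^{2n-2} \subset Q_0$ is an R-representation of $\mathrm{Sp}_{2n-2}(\CC)$, ensuring semisimple generic freeness. For the boundary case $n = 4$, i.e.\ $V = \Lambda^3_0 \CC^8$, the generic stabilizer of $\mathrm{Sp}_6(\CC)$ on $\Lambda^3_0 \CC^6$ is $\mathrm{SL}_3(\CC)$ embedded via $\CC^6 = \CC^3 \oplus (\CC^3)^{\vee}$ (as recalled in Proposition \ref{pSpE}), and one checks that this $\mathrm{SL}_3(\CC)$ acts generically freely on the $\CC^{2n-2} = \CC^6$ summand $\Lambda^{k-2}_0 \CC^{2n-2}$ of $Q_0$, killing the residual stabilizer. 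A binomial check confirms $\dim V_0 \geq \dim P$ here as well, and the rationality argument is closed.
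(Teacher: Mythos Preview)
Your argument conflates two different things: showing that $V$ itself is $P$-generically free, and exhibiting a $P$-generically free \emph{quotient} $V/V_0$ with $\dim V_0 \ge \dim P$. The strategy before Theorem~\ref{tSp} requires the latter. When you take $V_0$ to be the middle step of the three-term filtration, the quotient $V/V_0 = Q_1 = \Lambda^{k-1}_0\CC^{2n-2}$ is a single completely reducible piece on which the unipotent radical $R_u(P)$ acts trivially, so it is \emph{not} $P$-generically free and the no-name argument does not apply. For $k\ge 4$ this is repairable: what the proof of Lemma~\ref{lSpFree} actually gives is that the two-step quotient $V/V_{-1}$ is generically free for $P/\mathbb{G}_a$, and one then needs $\dim V_{-1}=\dim\Lambda^{k-1}_0\CC^{2n-2}\ge\dim P$. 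This is the check the paper makes (and is exactly where the exclusion $\Lambda^5_0\CC^{10}$ enters, since $\dim\Lambda^4_0\CC^8=42<45=\dim P$); your quantity $\dim V - \dim\Lambda^{k-1}_0\CC^{2n-2}$ is not the relevant one. The paper in fact passes to a further quotient $R$ with steps $\Lambda^k_0$ and $\Lambda^{k-1}_0$, gaining $\Lambda^{k-2}_0$ in the fibre, which makes the dimension estimate cleaner.

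For $k=3$ the gap is not repairable along your lines. Here $Q_{\pm 1}=\Lambda^2_0\CC^{2n-2}$, so $\dim V_{-1}=2n^2-5n+2 < 2n^2-n=\dim P$ for every $n$, and no choice of $V_0$ inside the three-step filtration gives a $P$-generically free quotient with large enough fibre. The paper therefore abandons the ``generically free quotient plus dimension count'' strategy entirely for $k=3$ and instead proves \emph{directly} that the quotient of the extension $0\to\Lambda^3_0\CC^{2n-2}\to R\to\Lambda^2_0\CC^{2n-2}\to 0$ by $\mathrm{Sp}_{2n-2}(\CC)\ltimes\CC^{2n-2}$ is rational, by the same normalizer-of-torus argument as in Proposition~\ref{pLambda3}. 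Your ad hoc treatment of $n=4$ also fails on its own terms: the generic stabilizer $\mathrm{SL}_3(\CC)$ of $\Lambda^3_0\CC^6$ does not act generically freely on $\CC^6=\CC^3\oplus(\CC^3)^\vee$, since $\dim\mathrm{SL}_3=8>6$.
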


\begin{proof}
Suppose first that $V =\Lambda^k_0 \CC^{2n}$, $k\ge 3$, is an exterior power with $k\neq n$. Then it has a filtration as
$P$-module which has quotients
\begin{gather*}
Q_{-1} = \Lambda^{k-1}_0 \CC^{2n-2}, \; Q_0 = \Lambda^{k}_0 \CC^{2n-2} + \Lambda^{k-2}_0 \CC^{2n-2} , \; Q_1 = \Lambda^{k-1}_0
\CC^{2n-2} \, .
\end{gather*}
We distinguish two cases: if $k=3$, then $V/P$ is generically a vector bundle over the quotient of 
\[
0\to \Lambda^3_0 \CC^{2n-2} \to R \to \Lambda^2_0 \CC^{2n-2} \to 0
\]
by $\mathrm{Sp}_{2n-2}(\CC ) \ltimes \CC^{2n-2}$. The proof of the rationality of this quotient is completely analogous to the
one given in Proposition \ref{pLambda3} for $2n-2 =8$ (just change $4$ to $n-1$ in the argument and it goes through).\\
If $k> 3$, then it suffices to prove two facts:
\begin{itemize}
\item[(1)]
$\dim (\Lambda^{k-1}_0 \CC^{2n-2} + \Lambda^{k-2}_0 \CC^{2n-2}) \ge \dim P\, .$ 
\item[(2)]
The extension
\[
0\to \Lambda^{k}_0 \CC^{2n-2} \to R \to \Lambda^{k-1}_0 \CC^{2n-2} \to 0
\]
is generically free for $P/\mathbb{G}_a  = \mathrm{Sp}_{2n-2}(\CC )\ltimes \CC^{2n-2}$.
\end{itemize}
The proof of (1) is immediate: for $k\ge 5$ both summands are R-representations whence the assertion. For 
$k=4$,
$\Lambda^{k-1}_0 \CC^{2n-2}$ is an R-representation, hence has dimension $\ge \mathrm{Sp}_{2n-2}(\CC )$, and $\Lambda^{2}_0
\CC^{2n-2}$ has dimension $\ge 2n-2+1 =\dim R_u (P)$.\\ 
Note that (2) is an immediate consequence of (the
proof of) Lemma
\ref{lSpFree}.\\
Now if $V = \Lambda^n_0 \CC^{2n}$, $n\ge 5$, then as $P$-module $V$ has quotients
\begin{gather*}
Q_{-1} = \Lambda^{n-1}_0 \CC^{2n-2}, \; Q_0 = \Lambda^{n-2}_0 \CC^{2n-2} , \; Q_1 = \Lambda^{n-1}_0 \CC^{2n-2} \, .
\end{gather*}
In this case 
\[
\dim \Lambda^{m}_0 \CC^{2m} = {2m \choose m} - {2m \choose m-2 } \ge \dim P = 2m^2 + m +2m +1 
\]
unless $m=4$ when $\dim \Lambda^4_0 \CC^8 = 42$, $\dim P = 45$. Also in the other cases $\Lambda^{n-1}_0 \CC^{2n-2}$ is an
R-representation.
\end{proof}

\begin{proposition}\xlabel{pExceptionSp}
The quotient $\Lambda^5_0 \CC^{10} /P$ is rational.
\end{proposition}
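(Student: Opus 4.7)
The plan is to analyze $V = \Lambda^5_0 \CC^{10}$ as a $P$-module using Proposition \ref{pStructurePmodules}. The three-step filtration $V_{-1} \subset V_0 \subset V_1 = V$ has graded pieces $Q_{\pm 1} = \Lambda^4_0 \CC^8$ (each of dimension $42$) and $Q_0 = \Lambda^3_0 \CC^8$ (of dimension $48$), summing to $\dim V = 132$. The direct dimension-count strategy used in Proposition \ref{pExteriorPowerSp} fails here because $\dim V_{-1} = 42 < 45 = \dim P$, and Lemma \ref{lSpFree} does not apply because $Q_{\pm 1} = \Lambda^4_0 \CC^8$ is an E-representation.

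The first step is to exploit the central subgroup $\mathbb{G}_a \subset P$. By Proposition \ref{pStructurePmodules}(2), $\mathbb{G}_a$ acts on $V$ via the unique (up to scalar) $\mathrm{Sp}_8(\CC)$-equivariant isomorphism $Q_1 \xrightarrow{\sim} Q_{-1}$, so $V/\mathbb{G}_a$ is birationally an affine bundle of rank $41 = \dim V_{-1} - 1$ over the two-step extension
\[
0 \to \Lambda^3_0 \CC^8 \to E \to \Lambda^4_0 \CC^8 \to 0,
\]
and this fibration is $P/\mathbb{G}_a = \mathrm{Sp}_8(\CC)\ltimes \CC^8$-equivariant. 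Hence $V/P$ is birational to an affine bundle over the quotient $E/(\mathrm{Sp}_8(\CC) \ltimes \CC^8)$, and it suffices to prove rationality of the latter. Crucially, $E$ is precisely the $\mathrm{Sp}_8(\CC)\ltimes\CC^8$-extension $E'_2$ appearing in the proof of Proposition \ref{pSpE} (case $\lambda=(f,1,1,1,1)$), where its generic freeness was established, so we need only upgrade generic freeness to rationality of the quotient.

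The second step is to invoke Vinberg's theorem (as already used in the proof of Proposition \ref{pSpE}): the Cartan subspace $\mathfrak{h}' \subset \Lambda^4_0 \CC^8$ of dimension $6$ is a section for the $\mathrm{Sp}_8(\CC)$-action. Pulling $E$ back over $\mathfrak{h}'$ produces a two-step extension
\[
0 \to \Lambda^3_0 \CC^8 \to E_{|\mathfrak{h}'} \to \mathfrak{h}' \to 0,
\]
acted on by $N(H)\ltimes \CC^8$, where $H$ is the explicit $(\ZZ/2\ZZ)^6$-stabilizer (together with the centre $\ZZ/2\ZZ$) described at the end of the proof of Proposition \ref{pSpE}. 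Rationality of $E/(\mathrm{Sp}_8(\CC)\ltimes \CC^8)$ is then equivalent to rationality of $E_{|\mathfrak{h}'}/(N(H)\ltimes \CC^8)$.

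The third step applies the weight-space decomposition of $\Lambda^3_0 \CC^8$ already worked out at the end of the proof of Proposition \ref{pSpE}: a multiplicity-one part $\bigoplus V_\chi$ of dimension $32$ (on which $H$ acts faithfully), together with a multiplicity-two part $\bigoplus V_{\chi'}$ of dimension $16$ that decomposes as two copies of $\CC^8$ under $N(H)$. The translations $\CC^8$ embed $\mathfrak{h}'$ nontrivially into one of these copies of $\CC^8$; using them to eliminate that copy reduces us to proving rationality of $R/N(H)$ for the explicit $46$-dimensional $N(H)$-representation
\[
R = \CC^8 \oplus \bigoplus_\chi V_\chi \oplus \mathfrak{h}'.
\]
Finally, $\mathfrak{h}'$ contains a generic $\CC^\ast$-line which gives an $(N(H),W')$-section (with $W'$ a finite subgroup of the little Weyl group), and the remaining quotient is rational by a torus-section/no-name argument analogous to the conclusion of the proof of Proposition \ref{pLambda4} (division by $\CC^\ast$-actions with rational sections, combined with the standard rationality of symmetric-group quotients).

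The main obstacle will be the last step: carefully tracking how $N(H)/H$ permutes the weight spaces of $\Lambda^3_0 \CC^8$ relative to the $\CC^\ast$-scaling on $\mathfrak{h}'$, and exhibiting a rational section on the nose rather than only up to a residual finite cocycle. The computation should however parallel closely the one executed in the last part of the proof of Proposition \ref{pSpE}, where the same $H$-weight data was used to prove generic freeness; here the same data has to be massaged just a little further to produce rationality.
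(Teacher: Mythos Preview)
Your first step (dividing by $\mathbb{G}_a$ and identifying the two-step extension $E$, together with its generic freeness via the $E_2'$ computation in Proposition~\ref{pSpE}) matches the paper exactly. The divergence begins immediately after, and it stems from an overstatement: you assert that ``it suffices to prove rationality of $E/(\mathrm{Sp}_8(\CC)\ltimes\CC^8)$'', but in fact you only need \emph{stable rationality of level at most $41$}, since that is the rank of the affine bundle $V/\mathbb{G}_a \to E$.

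The paper exploits precisely this slack. Instead of attacking $E/(\mathrm{Sp}_8\ltimes\CC^8)$ head-on, it adjoins the $9$-dimensional extension $E'\colon 0\to\CC^8\to E'\to\CC\to 0$ (the affine representation of $\mathrm{Sp}_8\ltimes\CC^8$). A generic point of $E'$ gives a $(\mathrm{Sp}_8\ltimes\CC^8,\,\mathrm{Sp}_8)$-section, so $(E\oplus E')/(\mathrm{Sp}_8\ltimes\CC^8)\simeq(\Lambda^3_0\CC^8\oplus\Lambda^4_0\CC^8\oplus\CC)/\mathrm{Sp}_8$. Now $\Lambda^3_0\CC^8$ is a generically free R-representation of $\mathrm{Sp}_8$ whose quotient is stably rational of level $8$ by Proposition~\ref{pExteriorPowerSp} (the case $n=4$, $k=3$), and $\dim(\Lambda^4_0\CC^8\oplus\CC)=43\ge 8$, so this last quotient is rational. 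This yields stable rationality of level $8+9=17<41$ for $E/(\mathrm{Sp}_8\ltimes\CC^8)$, and the proof is complete. No Cartan subspace, no $N(H)$-combinatorics.

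Your route through $\mathfrak{h}'$ and the weight decomposition of $\Lambda^3_0\CC^8$ might be salvageable, but as written it has a genuine gap: your third step asserts that the $\CC^8$-translations land in one $N(H)$-invariant copy of $\CC^8$ inside $\bigoplus V_{\chi'}$, yet the paper explicitly notes that $\bigoplus V_{\chi'}$ splits as $2\,\CC^8$ only as an $H$-module, \emph{not} as an $N(T)$-module, so the $N(H)$-equivariant splitting you need is not available for free. And your fourth step is, by your own admission, a sketch rather than an argument. The paper's trick of trading direct rationality for stable rationality of small level makes all of this unnecessary.
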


\begin{proof}
In this case we get a filtration
\begin{gather*}
Q_{-1} = \Lambda^{4}_0 \CC^{8}, \; Q_0 = \Lambda^{3}_0 \CC^{8} , \; Q_1 = \Lambda^{4}_0 \CC^{8} \, .
\end{gather*}
Dividing by $\mathbb{G}_a$ first, we get a vector bundle of rank $41$ over the $P/\mathbb{G}_a= \mathrm{Sp}_8 (\CC
) \ltimes \CC^8$-representation $E$ with quotients
\[
\Lambda^{3}_0 \CC^{8} , \; \Lambda^{4}_0 \CC^{8}
\]
which is generically free. We claim that the quotient of the latter by $\mathrm{Sp}_8 (\CC
) \ltimes \CC^8$ is stably rational of level $8+9=17$: consider $E+ E'$
where
$E'$ is the
$9$-dimensional extension
\[
0 \to \CC^8 \to E' \to \CC \to 0 \, .
\]
Taking a section we reduce to a rationality question for the quotient of 
\[
\Lambda^{3}_0 \CC^{8} + \Lambda^{4}_0 \CC^{8} + \CC
\]
by the group $\mathrm{Sp}_8 (\CC )$. But we know already that $\Lambda^{3}_0 \CC^{8}$ is generically free for that group and that 
$\Lambda^{3}_0 \CC^{8}/\mathrm{Sp}_8 (\CC )$ is stably rational of level $8$.
\end{proof}

\section{The case of orthogonal groups}\xlabel{sOrthogonalGroups}

Let $\mathrm{SO}_N (\CC )$ resp. $\mathrm{O}_N (\CC )$ be the special resp. full orthogonal group associated to a nondegenerate
symmetric bilinear form $B$. If $e_1, \dots , e_N$ is the standard basis in $\CC^N$, and $x= (x_1, \dots , x_N )$, $y=(y_1,
\dots , y_N) \in \CC^N$, it will be convenient to take
\[
B(x, \: y) = \sum_{i=1}^{N}  x_i  y_{N+1 -i}
\]
for some computations. Then in the case $N=2n$, we put $e_{-i}:= e_{2n+1 -i}$, $i= 1,\dots , n$, so that $e_{\pm i}$, $i=1,
\dots , n$ is a $B$-isotropic basis, $B(e_i, e_j) = \delta_{i, -j}$. In the case $N = 2n+1$ we put 
\[
e_{-i}:= e_{2n+2 -i}, \; i= 1,\dots , n, \quad e_0 := e_{n +1} \, ,
\]
so that $B(e_i, e_j ) =\delta_{i, -j}$ for the basis $e_0, \: e_{\pm i}$.\\
In the case $N = 2n +1$, the element $-I$ (the negative of the identity), is in $\mathrm{O}_N (\CC )$, but not in
$\mathrm{SO}_{N} (\CC )$, and
\[
\mathrm{O}_{2n+1} (\CC ) = \mathrm{SO}_{2n+1} (\CC ) \cup (-I) (\mathrm{SO}_{2n+1} (\CC )) \, .
\]
For $N = 2n$, one defines an element $g_0 \in \mathrm{O}_{2n} (\CC )$ by
\[
g_0 (e_n ) = e_{-n}, \: g_0 (e_{-n}) = e_n, \quad g_0 (e_{\pm i}) = e_{\pm i}, \; i \neq n \, .
\]
The $g_0$ is not in $\mathrm{SO}_{2n} (\CC )$ and
\[
\mathrm{O}_{2n} (\CC ) = \mathrm{SO}_{2n} (\CC ) \cup g_0 (\mathrm{SO}_{2n} (\CC ))\, .
\]

\

We begin by recalling the aspects of the representation theory for these groups that we will need. The irreducible
representations of $\mathrm{SO}_N (\CC )$ can be labelled by sequences $\lambda = ( \lambda_1, \dots , \lambda_n )$ of integers
satisfying $\lambda_1 \ge \dots \ge \lambda_n\ge 0$ if $N = 2n+1$ is odd, and satisfying $\lambda_1 \ge \dots \ge
\lambda_{n-1} \ge | \lambda_n |$ if $N = 2n$ is even. These are the dominant integral weights. We denote them by
\[
\Sigma^{\lambdaÊ}_0 \CC^N
\]  
in both cases. If $N=2n$ is even, the representations associated to $(\lambda_1,
\dots ,
\lambda_{n-1}, -\lambda_n )$ is obtained as follows: one precomposes the
representation associated to
$(\lambda_1,
\dots , \lambda_n )$ with the outer automorphism
\[
g_0 () g_0^{-1} \, :\, \mathrm{SO}_{2n} (\CC ) \to \mathrm{SO}_{2n} (\CC )
\]
obtained by conjugating by $g_0$. 

\

We will also need the picture of the irreducible representations of the full orthogonal groups $\mathrm{O}_N (\CC )$. For
$N=2n+1$, the situation is easy since $\mathrm{O}_N (\CC ) = \mathrm{SO}_N (\CC ) \times \ZZ / 2\ZZ = \mathrm{SO}_N (\CC )
\times \{ \pm I
\}$ (direct product). We can extend each irreducible representation $\Sigma^{\lambda }_0 (\CC^N )$ in two ways to $\mathrm{O}_N
(\CC )$:
\[
\Sigma^{\lambda }_0 (\CC^N )^+ , \quad \Sigma^{\lambda }_0 (\CC^N )^-
\]
where $-I$ acts either as multiplication by $+1$ or $-1$. We can thus label them as $(\lambda , \epsilon )$, $\epsilon = \pm
1$.\\
The case of $\mathrm{O}_N (\CC )$ with $N = 2n$ is more complicated due to the fact $\mathrm{O}_{2n} (\CC ) = \mathrm{SO}_{2n }
(\CC ) \rtimes (\ZZ /2\ZZ ) = \mathrm{SO}_{2n } (\CC ) \rtimes (g_0 )$ (semidirect product). For an irreducible representation
$(\varrho_{\lambda } , \: V=\Sigma^{\lambda }_0 \CC^N)$ of $\mathrm{SO}_{2n} (\CC )$ we can first form the induced
representation
\[
\mathrm{Ind} (V) = \mathrm{Ind}^{\mathrm{O}_N}_{\mathrm{SO}_N} (V)
\]
consisting of those regular $V$-valued functions $f$ on $G$ satisfying $f(hg) = \varrho_{\lambda } (h) f(g)$ for all $h \in
\mathrm{SO}_N (\CC )$ and all $g\in\mathrm{O}_N (\CC )$. This $\mathrm{Ind} (V)$ becomes a $\mathrm{O}_N (\CC )$-module via the
right translation action and decomposes as $\mathrm{SO}_N (\CC )$-representation as 
\[\Sigma^{\lambda_1, \dots , \lambda_n } \CC^N \oplus \Sigma^{\lambda_1, \dots , -\lambda_n } \CC^N\] 
(the one summand are functions
supported on $\mathrm{SO}_n (\CC )$, the other functions supported on $g_0 \mathrm{SO}_N (\CC )$). Now there are two cases:
\begin{itemize}
\item[(1)]
$\lambda_n \neq 0$ (this amounts to $g_0 \cdot \lambda \neq \lambda$ for the action of $g_0$ on weights): then $\mathrm{Ind}
(V)$ is irreducible as a $\mathrm{O}_N (\CC )$-representation. We will say that this is the representation of $\mathrm{O}_N
(\CC )$ associated to $(\lambda , \emptyset )$ and write
\[
\Sigma^{\lambda }_0 (\CC^N )^{\emptyset }
\]
in this case for the irreducible $\mathrm{O}_N (\CC )$-representation.
\item[(2)]
$\lambda_n =0$ (or equivalently $g_0 \cdot \lambda = \lambda$): in this case, the $\mathrm{O}_N (\CC )$-representation
$\mathrm{Ind} (V)$ decomposes into two irreducible summands for $\mathrm{O}_N (\CC )$:
\[
\mathrm{Ind}^{\mathrm{O}_N}_{\mathrm{SO}_N} (V) = \Sigma^{\lambda }_0 (\CC^N )^{+ } \oplus \Sigma^{\lambda }_0 (\CC^N
)^{- }\, .
\]
In fact, as $\mathrm{SO}_N (\CC )$-representations
\begin{gather*}
\Sigma^{\lambda }_0 (\CC^N )^{+ } = \Sigma^{\lambda_1, \dots , 0 } \CC^N \, ,\\
\Sigma^{\lambda }_0 (\CC^N )^{- } = \Sigma^{\lambda_1, \dots , 0 } \CC^N
\end{gather*}
and $g_0$ acts on a line of highest weight vectors in these spaces as multiplication by $+1$ resp. $-1$. We will say that these are the representations associated
to $(\lambda , \pm )$ of $\mathrm{O}_{2n} (\CC )$.
\end{itemize}
The cases considered in (1) and (2) give \emph{all the irreducible representations of} $\mathrm{O}_{2n} (\CC )$. We can
accordingly index them by $(\lambda_1, \dots , \lambda_{n}, \emptyset )$, $\lambda_1 \ge \dots \ge \lambda_n > 0$, resp.
$(\lambda_1, \dots , \lambda_{n}, \pm )$, $\lambda_1 \ge \dots \ge \lambda_n = 0$.

\

An explicit tensor algebra construction of these representations goes as follows: as in the case of $\mathrm{Sp}_{2n} (\CC )$
treated in section \ref{sInductionProcessSp} we define \emph{harmonic tensors} $\mathcal{H} (\otimes^k \CC^N, \: B)\subset
\otimes^k
\CC^N$ (just replace the symplectic form 
$\omega$ by $B$ everywhere). Then (\cite{G-W}, Chapter 10) one has for $N=2n +1$, $\lambda$ as above, $k=|\lambda |$
\[
\Sigma^{\lambda }_0 \CC^{2n +1} = s(A) \left( \mathcal{H} (\otimes^k \CC^N, \: B) \right) 
\]
where $s(A)$ is the Young symmetrizer associated to a tableau of shape $\lambda$ as above. If $N =2n$ is even, then if
$\lambda_n =0$ 
\[
\Sigma^{\lambda }_0 \CC^{2n} = s(A) \left( \mathcal{H} (\otimes^k \CC^N, \: B) \right)\, .
\]
If $\lambda_n > 0$, then
\[
s(A) \left( \mathcal{H} (\otimes^k \CC^N, \: B) \right) = \mathrm{Ind}^{\mathrm{O}_N}_{\mathrm{SO}_N} (\Sigma^{\lambda }_0
\CC^N)
\]
as irreducible $\mathrm{O}_N (\CC )$-modules.

\

Below we have to know the irreducible $\mathrm{O}_{2n} (\CC )$-representations with a nontrivial stabilizer in general position, i.e. those for which this stabilizer differs from the 
ineffectivity kernel of the action. The irreducible $E$-representations for the group $\mathrm{SO}_{2n}(\CC )$ can be read off from the table in \cite{Po-Vi}, so we need to know that we do not pick up a nontrivial stabilizer in general position when passing to the $\ZZ/2\ZZ$-extension $\ZZ/2\ZZ \ltimes \mathrm{SO}_{2n} (\CC )$; more precisely, we need to know that a representation $\Sigma^{\lambda, \emptyset }_0 \CC^{2n}$ or $\Sigma^{\lambda, \pm}_0 \CC^{2n}$ of $\mathrm{O}_{2n} (\CC )$ which splits into R-representations for $\mathrm{SO}_{2n} (\CC )$, has the property that the stabilizer in general position in the larger group $\mathrm{O}_{2n} (\CC )$ also coincides with the ineffectivity kernel of the action. This is easy to see for the representations of type $(\lambda_1, \dots , \lambda_n, \emptyset  )$ above (e.g. look at vectors of different lengths in $\Sigma^{\lambda_1, \dots , \lambda_n} \CC^N$ and $\Sigma^{\lambda_1, \dots , -\lambda_n }\CC^N$, extending the quadratic form $q$ to these tensor spaces). For representations of type $(\lambda_1, \dots , \lambda_{n-1}, 0, \pm )$ it is surprisingly difficult to see this directly, but we will show it is true in Appendix \ref{AppendixOrthogonalGroup}; we felt that including the proof in the main text would have interrupted the main line of the argument of this section too much and would have been inconvenient for the reader.

\begin{remark}\xlabel{rSO}
The stabilizer of a generic line in the standard representation $\CC^N$ inside $\mathrm{SO}_{N} (\CC )$ or $\mathrm{O}_N (\CC
)$ is $\mathrm{O}_{N-1} (\CC )$ or $\ZZ /2 \times \mathrm{O}_{N-1} (\CC )$. Thus if $W$ is a generically free $\mathrm{SO}_N
(\CC )$-representation
\[
(W + \CC^N ) /\mathrm{SO}_{N} (\CC ) \simeq (W + \CC )/\mathrm{O}_{N-1} (\CC ) \, .
\]
Similarly, if $W$ is a generically free $\mathrm{O}_N (\CC )$-representation
\begin{gather*}
(W + \CC^N ) /\mathrm{O}_{N} (\CC ) \simeq (W + \CC )/\mathrm{O}_{N-1} (\CC )\times (\ZZ/2 )  \, \\
\simeq W/\mathrm{O}_{N-1} (\CC ) \times (\CC /\ZZ/2 ) \simeq W/\mathrm{O}_{N-1} (\CC ) \times \CC \, .
\end{gather*}
Thus we have to understand the corresponding branching laws.
\end{remark}

\

The branching $\mathrm{SO}_{2n +1} (\CC ) \to \mathrm{SO}_{2n } (\CC )$ is multiplicity-free and described by (\cite{G-W},
Chapter 8) 
\begin{gather}\label{fSODec1}
\Sigma^{\lambda }_0 \CC^{2n+1} \mid_{\mathrm{SO}_{2n } (\CC )} = \bigoplus_{\lambda_1 \ge \mu_1 \ge \dots \ge \lambda_{n-1} \ge
\mu_{n-1} \ge \lambda_n \ge |\mu_{n}|}
\Sigma^{\mu}_0
\CC^{2n}
\, .
\end{gather}
The branching $\mathrm{SO}_{2n} (\CC ) \to \mathrm{SO}_{2n -1} (\CC )$ is also multiplicity-free and described by
\begin{gather}\label{fSODec2}
\Sigma^{\lambda }_0 \CC^{2n} \mid_{\mathrm{SO}_{2n-1 } (\CC )} = \bigoplus_{\lambda_1 \ge \mu_1 \ge \dots \ge \lambda_{n-1} \ge
\mu_{n-1} \ge |\lambda_n |} \Sigma^{\mu}_0 \CC^{2n-1} \, .
\end{gather}

Because of Remark \ref{rSO} we should also understand some facts about these decompositions as $\mathrm{O}_{N-1} ( \CC )$-representations. The
following ones follow directly from what was explained above about the structure of the representations of the full orthogonal
group:
\begin{itemize}
\item
In formula \ref{fSODec1} the decomposition into irreducibles for the full orthogonal group $\mathrm{O}_{2n}(\CC )$ is
\[
 \bigoplus_{\mu , \mu_n > 0}
\left( \Sigma^{\mu_1, \dots , \mu_{n-1} , \mu_n }_0
\CC^{2n} + \Sigma^{\mu_1, \dots , \mu_{n-1}, -\mu_n}_0
\CC^{2n} \right)  + \bigoplus_{\mu , \mu_n = 0}
\Sigma^{\mu}_0
\, .
\]
The element $g_0$ in $\mathrm{O}_{2n}(\CC )$ acts on the highest weight vectors in the summands $\Sigma^{\mu}$ with $\mu_n =0$ via certain signs which we will have
no need of knowing precisely.
\item
Formula \ref{fSODec2} gives the decomposition into $\mathrm{O}_{2n-1} (\CC )$-irreducibles, and the element $(-I)$ in
$\mathrm{O}_{2n-1} (\CC )$ acts via a sign in the summands which depends on the parity of the number of boxes in $\mu$. \end{itemize}

\

Note that formulas \ref{fSODec1} and \ref{fSODec2} also give us how irreducible $\mathrm{O}_N (\CC )$-representations
decompose when restricted to $\mathrm{O}_{N-1} (\CC )$ (the embedding $\mathrm{O}_{N-1} (\CC ) \subset \mathrm{O}_N (\CC )$
coming from Remark \ref{rSO} as always here). We just have to use the classification of irreducible $\mathrm{O}_N (\CC
)$-representations in terms of irreducible $\mathrm{SO}_N (\CC )$-representations recalled above.

\

The notions of R- and E-representations extend to the $\mathrm{SO}_N (\CC )$-case without change. 

\begin{definition}\xlabel{dRReprOrthogonal}
We call an irreducible representation $V$ of the group $\mathrm{O}_N (\CC )$ an R-representation if its restriction to
$\mathrm{SO}_N (\CC )$ decomposes as a sum of R-representations for $\mathrm{SO}_N (\CC )$. If $V$ is not an R-representation
we say it is an E-representation.
\end{definition}

Thus for $N=2n$, the R-representations of $\mathrm{O}_N (\CC )$ are 
\[
\Sigma^{\lambda }_0 (\CC^N )^{\pm }, \: (\lambda_n =0), \quad \Sigma^{\lambda }_0 (\CC^N )^{\emptyset }= \Sigma^{\lambda_1,
\dots ,
\lambda_n } \CC^N \oplus
\Sigma^{\lambda_1, \dots , -\lambda_n } \CC^N
\]
where $\lambda$ gives an R-representation for $\mathrm{SO}_N (\CC )$; for $N =2n +1$, they are similarly
\[
\Sigma^{\lambda }_0 (\CC^N )^{\pm }\, .
\]

\begin{remark}\xlabel{rSOCase34}
Note the accidental isomorphisms
\[
\mathrm{SO}_6 (\CC ) = \mathrm{SL}_4 (\CC )/(\ZZ/2\ZZ), \: \mathrm{SO}_5 (\CC ) = \mathrm{Sp}_4 (\CC ) /(\ZZ /2\ZZ )\, ,
\]
and 
\[
\mathrm{SO}_4 ( \CC ) = (\mathrm{SL}_2 (\CC ) \times \mathrm{SL}_2 (\CC )) /(\pm (\mathrm{id}, \mathrm{id})), \; \mathrm{SO}_3
(\CC ) = \mathrm{PSL}_2 (\CC ) \, .
\]
Comparing corresponding fundamental weights shows that for $\mathrm{SO}_6 (\CC )$
\[
\Sigma^{\lambda_1, \lambda_2, \lambda_3}_0 \CC^6 = \Sigma^{\lambda_1+\lambda_2, \: \lambda_1 - \lambda_3, \: \lambda_2 -\lambda_3, \: 0} \CC^4 \, ,
\]
while for $\mathrm{SO}_5 (\CC )$
\[
\Sigma^{\lambda_1, \lambda_2 }_0 \CC^5 = \Sigma^{\lambda_2 +\lambda_1, \: \lambda_1 - \lambda_2}_0 \CC^4\, ;
\]
for $\mathrm{SO}_4 (\CC )$
\[
\Sigma^{\lambda_1, \lambda_2 }_0 \CC^4 = \mathrm{Sym}^{\lambda_1 + \lambda_2 } \CC^2 \otimes \mathrm{Sym}^{\lambda_1 -\lambda_2
} \CC^2
\]
and for $\mathrm{SO}_3 (\CC )$
\[
\Sigma^{\lambda_1}_0 \CC^3 = \mathrm{Sym}^{2\lambda_1 } \CC^2 \, .
\]
\end{remark}

\begin{remark}\xlabel{rStabilizerSO}
We need the following information on the irreducible E-representations of $\mathrm{SO}_N (\CC )$: for $N\ge 7$ they are
\[
\CC , \: \CC^N, \: \Sigma^{2,0, \dots ,0}_0 \CC^N = \mathrm{Sym}^2_0 \CC^N, \: \Sigma^{1,1,0, \dots ,0}_0 \CC^N
=
\Lambda^2 \CC^N \, .
\]
For $N=5, \: 6$ we have the accidental isomorphisms from Remark \ref{rSOCase34}. Checking the list in \cite{Po-Vi} we find that
in addition to the standard E-representations for $N\ge 7$ we get only the following additions 
\[
\Sigma^{(1,1,1)}_0 \CC^6 = \mathrm{Sym}^2 \CC^4, \: \Sigma^{(1,1,-1)}_0 \CC^6= \mathrm{Sym}^2 (\CC^4 )^{\vee }
\]
For $N=4$, following \cite{APopov78} and using Remark \ref{rSOCase34}, they are 
\begin{gather*}
\Sigma^{1, 0 }_0 \CC^4 = \CC^2 \otimes \CC^2 \, , \\
\Sigma^{c, c }_0 \CC^4 = \mathrm{Sym}^{2c} \CC^2 \otimes \CC, \:  \Sigma^{c, -c }_0 \CC^4 = \CC \otimes
\mathrm{Sym}^{2c} \CC^2, \: c=0, 1, 2\, \\
\Sigma^{2, 0 }_0 \CC^4 = \mathrm{Sym}^2 \CC^2 \otimes \mathrm{Sym}^2\CC^2 \; \mathrm{and}\\
\Sigma^{2, 1}_0 \CC^4 = \mathrm{Sym}^3 \CC^2\otimes \CC^2 , \;  \Sigma^{2, -1}_0 \CC^4 = \CC^2 \otimes \mathrm{Sym}^3 \CC^2 . 
\end{gather*}
For $N=3$ they are clearly
\[
\CC, \: \Sigma^{1}_0 \CC^3, \: \Sigma^{2}_0 \CC^3\, .
\]
\end{remark}

We first prove

\begin{theorem}\xlabel{tO}
Let $N\ge 4$. Suppose that $V$ is an irreducible R-representation for $\mathrm{O}_N (\CC )$. If $V$ is already generically free for
$\mathrm{O}_N (\CC )$, then $V/\mathrm{O}_N (\CC )$ is stably rational of level $N$. Otherwise, $(V\oplus \CC^N)/\mathrm{O}_N
(\CC )$ is stably rational of level $N$.
\end{theorem}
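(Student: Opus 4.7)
The plan is to induct on $N$, using Remark \ref{rSO} as the inductive reduction. Since the stabilizer of a generic line in $\CC^N$ inside $\mathrm{O}_N(\CC)$ is $\ZZ/2\ZZ \times \mathrm{O}_{N-1}(\CC)$, one has the birational identification
\[
(V\oplus \CC^N)/\mathrm{O}_N(\CC) \simeq V/\mathrm{O}_{N-1}(\CC) \times \CC.
\]
When $V$ is already generically free, the no-name lemma lets me freely add the summand $\CC^N$, and when $V$ is not generically free the theorem is stated in terms of $V\oplus \CC^N$ in the first place. In either case it suffices to show that $V/\mathrm{O}_{N-1}(\CC)$ is stably rational of level at most $N-1$, since the extra factor $\CC$ from the $\ZZ/2\ZZ$-quotient together with the $\PP^{N-1}$ combines into $\PP^N$ birationally.

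I would then restrict $V=\Sigma^\lambda_0(\CC^N)^{\star}$ to $\mathrm{O}_{N-1}(\CC)$ using the multiplicity-free branching laws \eqref{fSODec1} and \eqref{fSODec2}, grouping the $\mathrm{SO}_{N-1}(\CC)$-summands $\Sigma^\mu_0\CC^{N-1}$ into irreducible $\mathrm{O}_{N-1}(\CC)$-summands according to the recipe recalled before Remark \ref{rSO}. The crucial point is that, by Remark \ref{rStabilizerSO}, the list of E-representations for $\mathrm{O}_{N-1}(\CC)$ is extremely short (essentially only $\CC$, $\CC^{N-1}$, $\mathrm{Sym}^2_0\CC^{N-1}$, $\Lambda^2\CC^{N-1}$ once $N-1\ge 7$). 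Hence for all but finitely many small $\lambda$, some interlacing $\mu$ yields an R-representation $W$ for $\mathrm{O}_{N-1}(\CC)$ as a summand of $V|_{\mathrm{O}_{N-1}(\CC)}$. Writing $V=W\oplus R$ as $\mathrm{O}_{N-1}(\CC)$-module and applying the no-name lemma to the equivariant trivial vector bundle $V\to W$, I obtain
\[
V/\mathrm{O}_{N-1}(\CC) \;\sim\; \bigl(W/\mathrm{O}_{N-1}(\CC)\bigr)\times \CC^{\dim R},
\]
and the inductive hypothesis supplies stable rationality of level $N-1$ for $W/\mathrm{O}_{N-1}(\CC)$, finishing the induction step.

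The main obstacle is twofold. First, one has to verify that an R-summand actually appears under branching except in an explicitly controllable short list of exceptional $\lambda$ (those whose interlacing partitions all give weights from the E-list: small symmetric powers, small exterior powers, and a few low rank exceptions from Remark \ref{rStabilizerSO}). Each of these exceptions has to be handled by an ad hoc argument, much in the spirit of Propositions \ref{pInductionBaseSL2}, \ref{pLambda3}, \ref{pLambda4}, by exhibiting a $(P,H)$-section for the appropriate stabilizer $H$ of a generic orbit and checking rationality of the resulting $H$-quotient. Second, the base of the induction at $N=4,5,6$ must be handled using the accidental isomorphisms of Remark \ref{rSOCase34}, reducing to the already proven Theorems \ref{tSLIrreducible} and \ref{tSp} for $\mathrm{SL}_4(\CC)$, $\mathrm{Sp}_4(\CC)$, and $\mathrm{SL}_2(\CC)\times\mathrm{SL}_2(\CC)$; here one must check that passing from the special orthogonal to the full orthogonal group does not enlarge the stabilizer in general position of an R-representation, which is precisely the content of Appendix \ref{AppendixOrthogonalGroup} and is needed to guarantee that the $\mathrm{O}_N$-R-representations really stay generically free.
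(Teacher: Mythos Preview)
Your overall inductive architecture matches the paper's: reduce $(V\oplus\CC^N)/\mathrm{O}_N(\CC)$ to $V/\mathrm{O}_{N-1}(\CC)\times\CC$ via Remark \ref{rSO}, then branch $V$ to $\mathrm{O}_{N-1}(\CC)$ and look for a good summand. But the central step contains a genuine gap.

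You write $V=W\oplus R$ with $W$ an irreducible R-representation for $\mathrm{O}_{N-1}(\CC)$ and then invoke the no-name lemma to get $V/\mathrm{O}_{N-1}(\CC)\sim (W/\mathrm{O}_{N-1}(\CC))\times\CC^{\dim R}$. This requires the $\mathrm{O}_{N-1}(\CC)$-action on $W$ to be \emph{generically free}, not merely that $W$ be an R-representation. An R-representation only has stabilizer in general position equal to the ineffectivity kernel, which for $\mathrm{O}_{N-1}(\CC)$ may well be the centre $\{\pm I\}$; when $-I$ acts trivially on $W$ but nontrivially somewhere in $R$, the no-name reduction fails as stated. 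For the same reason the inductive hypothesis does not directly give stable rationality of level $N-1$ for $W/\mathrm{O}_{N-1}(\CC)$: Theorem \ref{tO} in dimension $N-1$ only asserts this when $W$ is generically free, and otherwise speaks about $(W\oplus\CC^{N-1})/\mathrm{O}_{N-1}(\CC)$. The paper confronts exactly this obstruction and resolves it not by the no-name lemma but by the Severi--Brauer method, packaged as Lemma \ref{lO}: one needs, in addition to the R-summand $V'$, a second irreducible summand of dimension $\ge 2(N-1)$, and then Proposition \ref{pInductionStepO} verifies this extra hypothesis for all but the three exceptions $(\Sigma^{1,1,1}_0\CC^7)^{\pm}$, $(\Sigma^{2,2}_0\CC^5)^{\pm}$, $(\Sigma^{2,1}_0\CC^5)^{\pm}$, which are handled in Propositions \ref{pExteriorPowerO} and \ref{pFurtherExceptionsO}.

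Your proposed induction base is also off. The accidental isomorphisms of Remark \ref{rSOCase34} concern $\mathrm{SO}_N(\CC)$, not $\mathrm{O}_N(\CC)$; passing to the full orthogonal group adds an outer $\ZZ/2\ZZ$ which is not covered by Theorems \ref{tSLIrreducible} or \ref{tSp}, and in any case Theorem \ref{tSp} is only stated for $n\ge 4$, so it says nothing about $\mathrm{Sp}_4(\CC)$. The paper instead takes the base at $N=4$ (Proposition \ref{pInductionBaseO}), reducing one step further to $\mathrm{O}_3(\CC)=\ZZ/2\ZZ\times\mathrm{PSL}_2(\CC)$, where every generically free quotient is stably rational of level $1$.
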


\begin{remark}\xlabel{rProblemSO}
Note that one should not expect to be able to prove the complete analogue of Theorem \ref{tO} for the group $\mathrm{SO}_N (\CC
)$. This is due to the existence of irreducible R-representations
\[
\Sigma^{(c,c,\dots , c , \pm c)}_0 \CC^{2n}
\]
for $\mathrm{SO}_{2n} (\CC )$ which restrict to irreducible $\mathrm{O}_{2n-1}(\CC )$-representations. We will deal with the
$\mathrm{SO}_N (\CC )$-case later.
\end{remark}

As usual we will use induction to prove Theorem \ref{tO}. The induction base will be furnished by $\mathrm{O}_3 (\CC ) = \ZZ /2\ZZ \times \mathrm{PSL}_2 (\CC )$ (both for the special and full orthogonal groups). We will do this first. To begin with, we note the following Lemma.

\begin{lemma}\xlabel{lO}
Suppose that the irreducible R-representation $V$ for $\mathrm{O}_N (\CC )$ decomposes for $\mathrm{O}_{N-1} (\CC )$ in such a way that it contains two distinct summands
$V'$ and $W$ which are irreducible for $\mathrm{O}_{N-1} (\CC )$ and 
where $V'$ is an R-representation for $\mathrm{O}_{N-1} (\CC )$ and $\dim W \ge 2 (N-1)$.\\
Then if Theorem \ref{tO} holds for
$V'$, it holds for $V$.
\end{lemma}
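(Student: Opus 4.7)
The plan is to deduce Theorem \ref{tO} for $V$ from the corresponding assertion for $V'$ by restricting the $\mathrm{O}_N(\CC)$-action to the subgroup $\mathrm{O}_{N-1}(\CC)$ and exploiting the given decomposition. By Remark \ref{rSO} combined with the no-name lemma, showing Theorem \ref{tO} for $V$ amounts to showing that $V/\mathrm{O}_{N-1}(\CC)\times\CC$ is rational (or, in the non-generically-free case, that $(V\oplus\CC^N)/\mathrm{O}_{N-1}(\CC)\times\CC$ is rational), since $(V\oplus\CC^N)/\mathrm{O}_N(\CC)\simeq V/\mathrm{O}_{N-1}(\CC)\times\CC$ whenever the input is generically free. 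As an $\mathrm{O}_{N-1}(\CC)$-module one then has $V = V' \oplus W \oplus R$ with $R$ collecting the remaining irreducible summands and $\dim W \ge 2(N-1)$.

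The main step is to apply the no-name lemma to the trivial $\mathrm{O}_{N-1}(\CC)$-equivariant vector bundle $V\to V'$ of rank $\dim W + \dim R$. When $V'$ is itself $\mathrm{O}_{N-1}(\CC)$-generically free, this gives
\[
V/\mathrm{O}_{N-1}(\CC)\simeq V'/\mathrm{O}_{N-1}(\CC)\times\mathbb{A}^{\dim W+\dim R}\, .
\]
By hypothesis, Theorem \ref{tO} holds for $V'$, so the first factor is stably rational of level $N-1$, i.e.\ rational after multiplication by $\PP^{N-1}$; since $\dim W\ge 2(N-1)\ge N-1$, the affine factor already contains an $\mathbb{A}^{N-1}$ that absorbs this level, leaving a rational product, and Remark \ref{rSO} then yields Theorem \ref{tO} for $V$. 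In the non-generically-free subcase for $V$, the same argument is applied to the $\mathrm{O}_{N-1}(\CC)$-decomposition $(V\oplus\CC^N)\mid_{\mathrm{O}_{N-1}(\CC)} = V'\oplus W\oplus R\oplus\CC^{N-1}\oplus\CC$ by projecting to $V'\oplus\CC^{N-1}$: if $V'$ is R but not $\mathrm{O}_{N-1}(\CC)$-generically free its ineffectivity kernel is the central $\{\pm I\}$ acting trivially on $V'$, while $-I$ acts as $-1$ on the standard representation $\CC^{N-1}$, so $V'\oplus\CC^{N-1}$ is generically free, and Theorem \ref{tO} for $V'$ provides the stable rationality of $(V'\oplus\CC^{N-1})/\mathrm{O}_{N-1}(\CC)$ of level $N-1$.

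The principal obstacle, as I see it, is the bookkeeping in the remaining case where $V$ is itself $\mathrm{O}_N(\CC)$-generically free but $V'$ is R yet not $\mathrm{O}_{N-1}(\CC)$-generically free: here adjoining $\CC^{N-1}$ does not fit naturally into the statement one wants to prove for $V$, and the no-name step is obstructed by a potential Brauer-Severi twist arising from the central $\{\pm I\}$ acting projectively on $W$. One should then adapt the Severi-Brauer argument of Proposition \ref{pInductionStepSL}: form the Severi-Brauer scheme attached to $\PP(W)$ over $V'/\mathrm{O}_{N-1}(\CC)$, compare it with the Severi-Brauer scheme attached to $\PP(\CC^{N-1})$ (so both lie in the same subgroup of the Brauer group of the base), and exploit the bound $\dim W \ge 2(N-1)$ to secure a sufficient level of stable equivalence between them, from which the required rationality follows.
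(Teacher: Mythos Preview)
Your proposal is essentially correct and follows the same route as the paper: reduce via Remark \ref{rSO} to an $\mathrm{O}_{N-1}(\CC)$-problem, then use the no-name lemma over $V'$ when possible and the Severi-Brauer method of Proposition \ref{pInductionStepSL} when the center $\{\pm I\}$ obstructs. Your case organization (split first on whether $V'$ is generically free, then on whether $V$ is) differs cosmetically from the paper's (first replace $V$ by $V\oplus\CC^N$ to make it generically free, then split on whether $V'\oplus W$ is generically free), but the substance is the same.

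There is one sub-case your sketch in the last paragraph does not address: when the center $\{\pm I\}$ acts trivially on $W$ as well as on $V'$, so that $V'\oplus W$ is still not generically free and the Severi-Brauer twist you invoke is actually trivial. In that situation the paper observes that, since $V\oplus\CC$ is generically free for $\mathrm{O}_{N-1}(\CC)$, there must be some further irreducible summand $S$ (distinct from $W$) on which $-I$ acts as $-1$; then $V'\oplus S$ is generically free, Theorem \ref{tO} for $V'$ together with the no-name lemma gives stable rationality of level $2(N-1)$ for $(V'\oplus S)/\mathrm{O}_{N-1}(\CC)$, and the bound $\dim W\ge 2(N-1)$ finishes the argument via one more application of no-name. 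You should add this branch to complete the proof.
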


\begin{proof}
This follows directly from the Severi-Brauer method (cf. the proof of Proposition \ref{pInductionStepSL}), which we have
already used several times earlier; more precisely: by replacing $V$ by $V\oplus \CC^N$ if necessary we can suppose that $V$ is generically free.  By Remark \ref{rSO}, we know that then also $V \oplus \CC$ is generically free for $\mathrm{O}_{N-1} (\CC )$. In $V'$ only the center of $\mathrm{O}_{N-1} (\CC )$ may act trivially. If the action of $\mathrm{O}_{N-1}(\CC )$ on $V'\oplus W$ is then generically free, the Severi Brauer methods applies, as we know already stable rationality of level $N-1$ for $V'\oplus \CC^{N-1}$ by hypothesis (Theorem \ref{tO} holds for $V'$).\\
If the center of $\mathrm{O}_{N-1}(\CC )$ acts still trivially in $V' \oplus W$, we know that there will be an irreducible summand of $V\oplus \CC$, different from $W$, $S$ say, such that $V'\oplus S$ is generically free for $\mathrm{O}_{N-1} (\CC )$.  By the no-name lemma and the hypothesis on $V'$, we then know stable rationality of level $2(N-1)$ for $(V'\oplus S)/\mathrm{O}_{N-1}(\CC )$. So $(V'\oplus S \oplus W)/\mathrm{O}_{N-1}(\CC )$ is rational, hence also $( V\oplus \CC )/\mathrm{O}_{N-1}(\CC )$.
\end{proof}

\begin{proposition}\xlabel{pInductionBaseO}
Theorem \ref{tO} is true for $N=4$. 
\end{proposition}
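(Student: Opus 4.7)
By Remark \ref{rSO}, for any irreducible R-representation $V$ of $\mathrm{O}_4(\CC)$ one has the birational identification $(V\oplus \CC^4)/\mathrm{O}_4(\CC)\simeq V/\mathrm{O}_3(\CC)\times \CC$, so either formulation of Theorem \ref{tO} reduces to proving that $V/\mathrm{O}_3(\CC)$ is rational. Using the accidental isomorphism $\mathrm{O}_3(\CC)=\mathrm{PSL}_2(\CC)\times \{\pm I\}$ from Remark \ref{rSOCase34}, together with the branching formula (\ref{fSODec2}) and the identification $\Sigma^{\mu_1}_0\CC^3=\mathrm{Sym}^{2\mu_1}\CC^2$, I would decompose $V$ as an $\mathrm{SO}_3(\CC)$-module into a sum (possibly with multiplicities, when $V$ is induced from an $\mathrm{SO}_4$-representation with $\lambda_2\neq 0$) of binary forms $\mathrm{Sym}^{2\mu}\CC^2$ with $|\lambda_2|\le \mu \le \lambda_1$, where $-I\in \mathrm{O}_3(\CC)$ acts as $(-1)^\mu$ on each $\mathrm{Sym}^{2\mu}\CC^2$-piece.

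Inspection of the E-representations of $\mathrm{SO}_4(\CC)$ listed in Remark \ref{rStabilizerSO} shows that every R-representation of $\mathrm{O}_4(\CC)$ must have $\lambda_1\ge 3$, hence $V$ contains the summand $\mathrm{Sym}^{2\lambda_1}\CC^2$ of degree $\ge 6$. Since a generic binary form of degree $\ge 5$ has trivial stabilizer in $\mathrm{PGL}_2(\CC)=\mathrm{PSL}_2(\CC)$, the group $\mathrm{PSL}_2(\CC)$ acts generically freely on $V$. The no-name lemma then yields the birational equivalence
\[
V/\mathrm{PSL}_2(\CC)\simeq \bigl(\mathrm{Sym}^{2\lambda_1}\CC^2/\mathrm{PSL}_2(\CC)\bigr)\times V',
\]
where $V'$ denotes the complement of $\mathrm{Sym}^{2\lambda_1}\CC^2$ in $V$; the first factor is rational by the classical rationality of the moduli of binary forms of degree $\ge 6$ under $\mathrm{PGL}_2$.

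It then remains to quotient by the residual $\{\pm I\}$-action. Picking a $(\mathrm{PSL}_2(\CC),1)$-section $\Sigma_0\subset \mathrm{Sym}^{2\lambda_1}\CC^2$ gives an $(\mathrm{O}_3(\CC),\{\pm I\})$-section $\Sigma:=\Sigma_0\oplus V'\subset V$, and this section is $\{\pm I\}$-invariant since $\{\pm I\}$ acts by scalars on each $\mathrm{Sym}^{2\mu}\CC^2$-summand. Splitting $\Sigma=\Sigma^+\oplus \Sigma^-$ into eigenspaces for $\pm I$, one obtains $\Sigma/\{\pm I\}\simeq \Sigma^+\times (\Sigma^-/\{\pm 1\})$, and the quadratic quotient $\Sigma^-/\{\pm 1\}$ is rational (affine-Veronese), so the product is rational whenever both $\Sigma^{\pm}$ are nonzero. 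For $\lambda_1\ge 3$ this is automatic whenever the range $|\lambda_2|\le \mu\le \lambda_1$ contains indices of both parities, i.e.\ whenever $\lambda_1>|\lambda_2|$.

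The main obstacle is the degenerate case $\lambda_1=|\lambda_2|$, where only one $\mu$-value is present and the arguments above may give $\Sigma^+=0$ or $\Sigma^-=0$. In this case I would treat the two sub-cases by hand: if $\lambda_1$ is even then $-I$ acts trivially on $V|_{\mathrm{O}_3(\CC)}$ and $V/\mathrm{O}_3(\CC)=V/\mathrm{PSL}_2(\CC)$ is rational by the classical result quoted above; if $\lambda_1$ is odd then $-I$ acts as the scalar $-1$ and $\Sigma/\{\pm I\}=\Sigma_0/\{\pm 1\}\simeq \CC^{2\lambda_1-2}/\{\pm 1\}$ is again rational as a Veronese-type quotient of an affine space. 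This completes the proof for $N=4$.
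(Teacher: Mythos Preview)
Your reduction via Remark \ref{rSO} to a question about $V/\mathrm{O}_3(\CC)$ is correct, but you aim for more than is needed: the theorem only requires $(V/\mathrm{O}_3(\CC))\times\CC$ to be rational (stable rationality of level~$1$), not $V/\mathrm{O}_3(\CC)$ itself. Your attempt to prove the stronger statement runs into a genuine gap.

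The problematic step is the passage from the $(\mathrm{PSL}_2,1)$-section $\Sigma_0\subset\mathrm{Sym}^{2\lambda_1}\CC^2$ to a $\{\pm I\}$-stable section $\Sigma=\Sigma_0\oplus V'$ and the subsequent ``eigenspace splitting'' $\Sigma=\Sigma^+\oplus\Sigma^-$. The section $\Sigma_0$ is a \emph{nonlinear} subvariety, and when $-I$ acts as the scalar $-1$ on $\mathrm{Sym}^{2\lambda_1}\CC^2$ (i.e.\ $\lambda_1$ odd) there is no reason for $-\Sigma_0=\Sigma_0$; your justification ``$\{\pm I\}$ acts by scalars'' only shows that $-I$ sends $\Sigma_0$ to $-\Sigma_0$, not that these coincide. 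Even if one arranged a $\{\pm 1\}$-stable $\Sigma_0$, the expression $\Sigma^-/\{\pm1\}$ would be a $\{\pm 1\}$-quotient of a nonlinear variety, not an affine Veronese quotient, so rationality is not automatic. (There is a further issue in the induced case $\lambda_2\neq 0$: the element $-I_3\in\mathrm{O}_4\setminus\mathrm{SO}_4$ interchanges the two $\mathrm{SO}_4$-summands, so it does not act by the scalar $(-1)^\mu$ on the two copies of $\mathrm{Sym}^{2\mu}\CC^2$ separately.)

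The paper's argument sidesteps all of this: since $\mathrm{O}_3(\CC)=\mathrm{PSL}_2(\CC)\times\ZZ/2\ZZ$, one simply adds a one-dimensional summand $\CC$ on which the $\ZZ/2\ZZ$-factor acts nontrivially; a generic point of this $\CC$ then gives a $(\mathrm{O}_3,\mathrm{PSL}_2)$-section, so $(V\oplus\CC)/\mathrm{O}_3\simeq (V/\mathrm{PSL}_2)\times\CC$, which is rational by the classical rationality of $\mathrm{PSL}_2$-quotients. This yields exactly the required level-$1$ stable rationality without ever confronting the $\{\pm I\}$-quotient of a nonlinear section.
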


\begin{proof}
An irreducible representation $V$ of $\mathrm{O}_{4} (\CC )$ has as underlying $\mathrm{SO}_4 (\CC )$-representation 
\[
\Sigma^{\lambda_1, \lambda_2 }_0 \CC^4 + \Sigma^{\lambda_1, -\lambda_2 }_0 \CC^4 , \; \lambda_2 > 0
\]
or $\Sigma^{\lambda_1, 0}_0 \CC^4$. We know the stable rationality of level $1$ of generically free $\mathrm{O}_3 (\CC ) = \mathrm{PSL}_2 (\CC ) \times \ZZ /2\ZZ$-representations: if $\tilde{V}$ is one such, add $\CC$ where $\ZZ /2\ZZ$ acts via a nontrivial character. Then $(\tilde{V}\oplus \CC )/ (\mathrm{PSL}_2 (\CC ) \times \ZZ /2\ZZ ) \simeq \tilde{V}/\mathrm{PSL}_2 (\CC ) \times \CC$, and we know that $\tilde{V}/\mathrm{PSL}_2 (\CC )$ is always rational. Thus Theorem \ref{tO} is true for $N=4$ as we can reduce its statement to a stable rationality result of level $1$ for $\mathrm{O}_3 (\CC )$. 
\end{proof}

We now describe the induction step. We have to show that we can reduce to the case $N=4$. 

\begin{proposition}\xlabel{pInductionStepO}
Let $N\ge 5$. Then the hypotheses of Lemma \ref{lO} are satisfied for every R-representation of $\mathrm{O}_N (\CC )$ which is
not one of the following:
\[
(\Sigma^{1, 1, 1}_0 \CC^{7})^{\pm } \, , \; ( \Sigma^{2,2}_0 \CC^5)^{\pmÊ}, \; ( \Sigma^{2,1}_0 \CC^5)^{\pm } \, .
\]
\end{proposition}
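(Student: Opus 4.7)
The plan is to verify the hypotheses of Lemma \ref{lO} directly from the branching formulas \ref{fSODec1} and \ref{fSODec2}. Given an irreducible R-representation $V = \Sigma^{\lambda}_0 (\CC^N)^{\bullet}$ of $\mathrm{O}_N (\CC)$, these formulas enumerate (multiplicity-freely) all irreducible summands $\Sigma^{\mu}_0 (\CC^{N-1})^{\bullet}$ of the restriction to $\mathrm{O}_{N-1}(\CC)$. For the R-summand $V'$ I would take the candidate obtained by dropping the last coordinate of $\lambda$ (with the appropriate $\pm$ or $\emptyset$ label depending on whether the last coordinate is zero or not); this inherits R-ness from $V$ via the short list of E-representations in Remark \ref{rStabilizerSO}, except in a few degenerate small cases. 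For $W$ I would pick any other interlacing summand meeting the dimension bound $\dim W \ge 2(N-1)$, which by the Weyl dimension formula holds outside a finite list of small $\mu$.

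The execution proceeds in two stages. First, whenever $\lambda$ has sufficiently many large parts (in practice, either some $\lambda_i \ge 3$, or at least two nonzero coordinates beyond the first), the branching produces many distinct summands, most of them R-representations and of dimension at least $2(N-1)$ by elementary binomial estimates, so the claim is automatic. Second, for $\lambda$ with small coordinates --- which forces $N$ to be small as well --- one must enumerate all restricted summands by hand, consult Remarks \ref{rSOCase34} and \ref{rStabilizerSO} (noting the extra accidental E-representations for $\mathrm{SO}_N$ with $N \le 6$), and verify case-by-case that the required pair $(V', W)$ exists. In the $\mathrm{O}_{2n}(\CC) \to \mathrm{O}_{2n-1}(\CC)$ direction one should be careful that restrictions of $(\lambda, \pm)$-type representations can give irreducible (not further splitting) summands on the target group (cf.\ Remark \ref{rProblemSO}), which slightly reduces the count of distinct irreducible pieces.

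The main obstacle is this second, bookkeeping-heavy stage: for each $N \in \{5, 6, 7\}$ and each small partition $\lambda$ one must rule out that the branching fails to supply both an R-summand and a summand of dimension $\ge 2(N-1)$. The genuinely failing cases are expected to be exactly $(\Sigma^{1,1,1}_0 \CC^7)^{\pm}$ (whose restrictions to $\mathrm{O}_6(\CC)$ involve only the low-dimensional E-type summands $\CC$, $\CC^6$, $\Lambda^2\CC^6$, together with the accidental exceptions from Remark \ref{rSOCase34}), $(\Sigma^{2,2}_0 \CC^5)^{\pm}$ and $(\Sigma^{2,1}_0 \CC^5)^{\pm}$ (where Remark \ref{rSOCase34} combined with the rich list of E-representations for $\mathrm{O}_4(\CC)$ in Remark \ref{rStabilizerSO} leaves no valid pair). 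These three exceptions will subsequently need to be handled by a separate ad hoc argument outside the scope of the Severi--Brauer reduction embodied in Lemma \ref{lO}.
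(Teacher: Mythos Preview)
Your plan is essentially the paper's: pick the ``top'' branching summand for $V'$, a neighbouring one for $W$, and isolate the small exceptions. Two points in your description are off and would trip you up in execution.

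First, the recipe ``drop the last coordinate of $\lambda$'' for $V'$ only makes sense when $N=2n$ is even. When $N=2n+1$ is odd, both $\lambda$ and the interlacing $\mu$ have $n$ entries; the correct choice is $V'=(\Sigma^{\lambda_1,\dots,\lambda_n}_0\CC^{2n})^{\emptyset}$ if $\lambda_n\neq 0$ (respectively $(\Sigma^{\lambda}_0\CC^{2n})^{\pm}$ if $\lambda_n=0$), i.e.\ one keeps all coordinates. It is precisely in the odd case with $\lambda_n\neq 0$ that $V'$ can fail to be an R-representation of $\mathrm{O}_{N-1}$, and this is exactly what produces the three listed exceptions (via the accidental E-representations of $\mathrm{SO}_6$ and $\mathrm{SO}_4$ in Remark \ref{rStabilizerSO}).

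Second, the special case you flag via Remark \ref{rProblemSO} concerns the $\emptyset$-type representations $(\Sigma^{c,\dots,c}_0\CC^{2n})^{\emptyset}$ for $N=2n$ even, not the $\pm$-type. Each of its two $\mathrm{SO}_{2n}$-constituents restricts to a \emph{single} $\mathrm{O}_{2n-1}$-irreducible, so one takes those two copies as $V'$ and $W$; the paper separates this out as its Case 1.

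Organizationally the paper is tighter than your generic/small split: instead of the Weyl dimension formula plus enumeration, it proves once that for $m\ge 4$ the only $\mathrm{O}_m$-irreducibles of dimension $<2m$ are $\CC$ and $\CC^m$, then takes $W$ to be the summand whose Young diagram has exactly one fewer box than that of $V'$. Since $V'$ has $\ge 3$ boxes (else it would be an E-representation), $W$ has $\ge 2$ boxes and hence dimension $\ge 2(N-1)$, the sole exception being $W=(\Sigma^{1,1}_0\CC^4)^{\emptyset}$, which forces $V=(\Sigma^{2,1}_0\CC^5)^{\pm}$, already on the list.
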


\begin{proof}
We start with a preliminary remark: for $m \ge 4$ there are no irreducible representations of $\mathrm{O}_{m} (\CC )$ of dimension $< 2m$ except $\CC$ and $\CC^m$. In fact, for $m\ge 5$ we have
\[
\dim \mathrm{O}_m (\CC ) = \frac{m (m-1) }{2} \ge 2m \, .
\]
So for $m\ge 5$, every irreducible representation of $\mathrm{O}_{m} (\CC )$ of dimension $< 2m$ must be an $E$-representation, hence be composed of $E$-representations for $\mathrm{SO}_{m} (\CC )$. Going through the list in Remark \ref{rStabilizerSO} shows that this leaves only $\CC$ or $\CC^m$.\\
For $m=4$, we know that the irreducible $\mathrm{O}_4 (\CC )$-representations are of the form
\[
\mathrm{Sym}^a \CC^2 \otimes \mathrm{Sym}^a \CC^2,  \quad \mathrm{or} \quad \mathrm{Sym}^a \CC^2 \otimes \mathrm{Sym}^b \CC^2 + \mathrm{Sym}^b \CC^2 \otimes \mathrm{Sym}^a \CC^2, \; a\neq b
\]
of dimensions $(a+1)^2$ resp. $2(a+1)(b+1)$. This is strictly smaller than $8$ only if $a=0, 1$ in the first case,  and for the pairs $(0, 0)$, $(0,1)$, $(0, 2)$ or vice versa in the second case. These are all $E$-representations.  In particular note also that an irreducible representation of $\mathrm{O}_m (\CC )$ of the form
\[
(\Sigma^{\lambda_1, \dots , \lambda_m }_0 \CC^m )^{\emptyset } \; \mathrm{or } \; (\Sigma^{\lambda_1, \dots , \lambda_m }_0 \CC^m )^{\pm }
\]
will have dimension $\ge 2m$ for $m\ge 4$ if $ \lambda_1 + \dots + \lambda_{m-1} + |\lambda_m| \ge 2$ except if it is $( \Sigma^{1,1}_0 \CC^4 )^{\emptyset }$. 

\

Suppose now that $V$ is an $R$-representation for $\mathrm{O}_N (\CC )$, and $N\ge5$. We will distinguish two cases:  
\begin{enumerate}
\item[(1)]
 $V= (\Sigma^{c, \dots , c, c}_0 \CC^N)^{\emptyset }$, $N$ is even, 
\item[(2)] 
not the case (1). 
\end{enumerate}

We recall that the representation $V$ is of the form $(\Sigma_0^{\lambda_1, \dots , \lambda_n} \CC^{2n+1})^{\pm }$ for odd $N=2n+1$, or it is $(\Sigma_0^{\lambda_1, \dots , \lambda_{n-1}, 0} \CC^{2n})^{\pm }$ resp. $(\Sigma_0^{\lambda_1, \dots , \lambda_n} \CC^{2n})^{\emptyset}$ for even $N=2n$.

\

\textbf{Case 1}: Here
\[
V = \Sigma^{c,\dots , c, c}_0 \CC^{2n} + \Sigma^{c,\dots , c, -c}_0 \CC^{2n}, 
\]
and we can satisfy the hypotheses of Lemma \ref{lO} by putting $V' = ( \Sigma^{c, \dots , c}_0 \CC^{2n-1})^{\pm }$ and $W = ( \Sigma^{c, \dots , c}_0 \CC^{2n-1})^{\pm }$ ($V'$ coming from the one irreducible $\mathrm{SO}_{2n} (\CC )$-summand of $V$, $W$ coming from the other one). 

\

\textbf{Case 2}: For
$N=2n+1$, $\lambda_n\neq 0$, the representation
$V$ will contain the
$\mathrm{O}_{N-1} (\CC )$-subrepresentation 
\[
V' := (\Sigma^{(\lambda_1, \dots , \lambda_n )}_0 \CC^{2n} + \Sigma^{(\lambda_1, \dots , -\lambda_n )}_0 \CC^{2n} )
\]
which has the properties required except for $V =(\Lambda^{1,1,1}_0 \CC^7 )^{\pm }$, $(\Sigma^{2,2}_0 \CC^5)^{\pm}$ or $(\Sigma^{2,1}_0 \CC^5)^{\pm}$; these are precisely the exceptions listed in the statement of the proposition.\\
If $\lambda_n =0$, $V$ will contain the
$\mathrm{O}_{N-1} (\CC )$-subrepresentation 
\[
V' := (\Sigma^{(\lambda_1, \dots , \lambda_{n-1}, 0 )}_0 ( \CC^{2n}))^{\pm } \, .
\]
which is also an $R$-representation because $V$ is one.\\
If $N=2n$, then $V$ will contain the
$\mathrm{O}_{2n-1}(\CC )$ subrepresentation
\[
V' := ( \Sigma^{(\lambda_1, \dots , \lambda_{n-1} )}_0 \CC^{2n-1} )^{\pm }
\]  
which also is an R-representation because otherwise, $2n-1$ being odd and $\ge 5$, it would be one of $\CC$, $\CC^{2n-1}$, $\mathrm{Sym}^2_0 \CC^{2n-1}$ or $\Lambda^2 \CC^{2n-1}$ . But then also $V$ would be an $E$-representation of one of these types.\\
Thus we have found in each case an $R$-representation $V'$ as required for Lemma \ref{lO}. Thus it remains to find a representation $W$ as required for that Lemma as well. Note that the total number of boxes in the Young diagrams parametrizing the $\mathrm{SO}_{N-1} (\CC )$ irreducible summands of the representations $V'$ above will be $\ge 3$ in all cases. For if there were only $\le 2$ boxes we would have $E$-representations. Now because we are not in case 1, so $V\neq (\Sigma^{c, \dots , c, c}_0 \CC^N)^{\emptyset }$, $N$ is even, we get that the decomposition of $V$ into $\mathrm{O}_{N-1}(\CC )$-irreducibles will contain -besides $V'$- also an irreducible $\mathrm{O}_{N-1}( \CC )$-representation $W$ parametrized by a Young diagram with total number of boxes precisely one less than in the Young diagram of $V'$. Now by the considerations at the beginning of the proof, we will also have $\dim W \ge 2(N-1)$ unless if $(W = \Sigma^{1,1}_0 \CC^4)^{\emptyset}$. In that case, $V'$ must have one box more, and looking at the above construction, we see that we could only end up with this $W$ if $V = (\Sigma^{2,1}_0 (\CC^5))^{\pm}$. This is one of the exceptions listed in the statement of the Proposition.
\end{proof}

\begin{proposition}\xlabel{pFurtherExceptionsO}
Let $V$ be one of the $\mathrm{O}_5 (\CC )$-representations $(\Sigma^{2,1}_0 \CC^5)^{\pmÊ}$ or $( \Sigma^{2,2}_0 \CC^5)^{\pm}$. Then Theorem \ref{tO} holds for $V$.
\end{proposition}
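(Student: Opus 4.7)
The plan is to reduce both representations to questions about $\mathrm{Sp}_4(\CC)$ via the accidental isomorphism $\mathrm{SO}_5 \cong \mathrm{Sp}_4(\CC)/\mu_2$ from Remark \ref{rSOCase34}, and then deploy the $P$-section machinery developed in Section \ref{sInductionProcessSp}. Since $\mathrm{O}_5(\CC) = \mathrm{SO}_5(\CC) \times \ZZ/2\ZZ$ (direct product) and the residual $\ZZ/2\ZZ = \langle -I \rangle$ acts linearly with a rational quotient on the standard $\CC^5$ summand, it is enough to prove stable rationality of level $5$ for $V/\mathrm{SO}_5(\CC)$ in each case (the $\pm$ twist affecting only the $\ZZ/2\ZZ$-part). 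Via Remark \ref{rSOCase34}, $\Sigma^{2,2}_0 \CC^5$ becomes $\mathrm{Sym}^4 \CC^4 = \Sigma^{4,0}_0 \CC^4$ and $\Sigma^{2,1}_0 \CC^5$ becomes $\Sigma^{3,1}_0 \CC^4$ as $\mathrm{Sp}_4(\CC)$-representations; in both cases the center $\mu_2 \subset \mathrm{Sp}_4$ acts trivially (even coordinate sum of the highest weight).

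With $P \subset \mathrm{Sp}_4(\CC)$ the stabilizer of a vector in $\CC^4$, we have $P = \mathrm{Sp}_2(\CC) \ltimes R_u(P)$, $\dim P = 6$, and $P$ is special as an extension of $\mathrm{Sp}_2$ by a unipotent group. Since $-I$ does not lie in $P$, the $\mathrm{Sp}_4$-action on $V \oplus \CC^4$ is generically free, and the analogue of Remark \ref{rSectionSp} (valid for $n = 2$) gives $(V \oplus \CC^4)/\mathrm{Sp}_4(\CC) = V/P$, so it suffices to prove $V/P$ rational. Following the template of Section \ref{sInductionProcessSp}, the aim is to exhibit a $P$-subrepresentation $V_0 \subset V$ with $\dim V_0 \ge \dim P = 6$ such that $V/V_0$ is $P$-generically free; $P$-specialness then yields rationality of $V/P$.

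We compute the $P$-module filtrations via Proposition \ref{pStructurePmodules}. For $V = \mathrm{Sym}^4 \CC^4$ the filtration is symmetric with $k = 4$ and pieces
\[
Q_{\pm 4} = \CC, \quad Q_{\pm 3} = \CC^2, \quad Q_{\pm 2} = \CC + \mathrm{Sym}^2 \CC^2, \quad Q_{\pm 1} = \CC^2 + \mathrm{Sym}^3 \CC^2,
\]
\[
Q_0 = \CC + \mathrm{Sym}^2 \CC^2 + \mathrm{Sym}^4 \CC^2
\]
as $\mathrm{Sp}_2$-modules; we take $V_0 = V_{-2}$, of dimension $1+2+4 = 7 \ge 6$. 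For $V = \Sigma^{3,1}_0 \CC^4$ a parallel branching computation gives $k = 3$ and pieces
\[
Q_{\pm 3} = \CC^2, \quad Q_{\pm 2} = \mathrm{Sym}^2 \CC^2 + \CC, \quad Q_{\pm 1} = \mathrm{Sym}^3 \CC^2 + 2\CC^2, \quad Q_0 = 2\mathrm{Sym}^2 \CC^2 + \CC;
\]
again take $V_0 = V_{-2}$, of dimension exactly $6 = \dim P$.

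The main obstacle is verifying $P$-generic freeness of $V/V_0$: in each case the top $\mathrm{Sp}_2$-component $Q_l$ is an E-representation ($\CC$ for $\mathrm{Sym}^4 \CC^4$, $\CC^2$ for $\Sigma^{3,1}_0 \CC^4$), so Lemma \ref{lSpFree} does not apply directly. The verification must therefore be made by hand, exploiting the full structure of $P$: the reductive part $\mathrm{Sp}_2$ acts generically freely modulo its center on the R-summands appearing in $V/V_0$ (the $\mathrm{Sym}^4 \CC^2$-summand in $Q_0$ in the first case, $\mathrm{Sym}^3 \CC^2$ together with enough copies of $\CC^2$ in the second), the central $\mathbb{G}_a \subset R_u(P)$ shifts the filtration index by $2$ and acts nontrivially between matching composition factors two steps apart, and the translations $\mathbb{G}_a^2 \subset R_u(P)$ shift by $1$ and act nontrivially between adjacent factors. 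Assembling these actions and checking that no residual subgroup of $P$ fixes a generic element of $V/V_0$ completes the proof.
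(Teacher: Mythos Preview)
Your approach is genuinely different from the paper's, and it contains a real gap at the reduction step.

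The paper's argument stays on the orthogonal side: it restricts $V$ to $\mathrm{O}_4(\CC)$, observes that both representations contain the summand $\Sigma^{2,1}_0\CC^4 \oplus \Sigma^{2,-1}_0\CC^4$, checks this summand is already generically free for $\mathrm{O}_4(\CC)$, and then invokes the known level-$10$ stable rationality for generically free $\mathrm{O}_4(\CC)$-quotients together with the no-name lemma. The point is that $\mathrm{O}_4(\CC)$ is precisely the stabilizer of a generic (anisotropic) vector in $\CC^5$, so this restriction is exactly what Remark \ref{rSO} prescribes.

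Your route via $\mathrm{Sp}_4(\CC)$ breaks down at the sentence ``so it suffices to prove $V/P$ rational.'' The subgroup $P\subset\mathrm{Sp}_4(\CC)$ maps isomorphically to a subgroup $\bar P\subset\mathrm{SO}_5(\CC)$, but $\bar P$ is \emph{not} the stabilizer $\mathrm{O}_4(\CC)$ of an anisotropic vector in $\CC^5$; under the exceptional isomorphism it corresponds to the stabilizer of an isotropic object. Consequently, rationality of $V/P=(V\oplus\CC^4)/\mathrm{Sp}_4(\CC)$ does not translate into rationality of $(V\oplus\CC^5)/\mathrm{SO}_5(\CC)$. Nor can you invoke the no-name lemma to pass from $(V\oplus\CC^4)/\mathrm{Sp}_4$ to $V/\mathrm{SO}_5\times\CC^4$: the $\mathrm{Sp}_4$-action on $V$ is not generically free (the centre $\mu_2$ acts trivially), so the hypothesis of no-name fails. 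What you actually get is a fibration $V/P\to V/\mathrm{SO}_5$ with generic fibre $(\CC^4\smallsetminus\{0\})/\mu_2$, and since $\mathrm{SO}_5=\mathrm{PSp}_4$ is not special, there is no reason for this bundle to be Zariski locally trivial. Rationality of the total space therefore does not yield a specific level of stable rationality for the base.

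A secondary gap is that the crucial generic-freeness check for $V/V_0$ is only sketched: in both cases the top quotient $Q_l$ is an E-representation for $\mathrm{Sp}_2$, Lemma \ref{lSpFree} is inapplicable, and ``assembling these actions and checking'' is exactly the nontrivial part that would need to be written out.
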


\begin{proof}
Remark  that -if restricted to $\mathrm{O}_4 (\CC )$- both representations contain a summand 
\[
\Sigma^{2,1}_0 \CC^4 + \Sigma^{2, -1}_0\CC^4 \, .
\]
We claim that this is already generically free for $\mathrm{O}_4 (\CC )$. Indeed, according to \cite{APopov78}, one of the summands taken alone as $\mathrm{SO}_4 (\CC )$-representation has a nontrivial stabilizer in general position $( \ZZ /2\ZZ )^2$. This is easy to see as a pencil of binary cubics in $\Sigma^{2,1}_0 \CC^4 = \mathrm{Sym}^3 \CC^2 \otimes \CC^2$ can be seen as a covering map $\PP^1 \to \PP^1$ of degree $3$, ramified in four points by the Riemann-Hurwitz formula. These four points are stabilized inside $\mathrm{PGL}_2 (\CC )$ by a Klein four group. At the same time we see from this description that two pencils of binary cubics have a trivial stabilizer in general.\\
Now we know stable rationality of level $10$ for generically free quotients of the group $\mathrm{O}_4 (\CC )$ already by the argument in \cite{Bogo86}, see also the introduction to this paper where this argument is recalled. However, the codimension of $ \Sigma^{2,1}_0 \CC^4 + \Sigma^{2, -1}_0\CC^4$ in $\Sigma^{2,1}_0 \CC^5$ resp. $\dim \Sigma^{2,2}_0 \CC^5$ is bigger than $10$, 
so we conclude by the no-name lemma.
\end{proof}

\begin{proposition}\xlabel{pExteriorPowerO}
Let $V$ be the $\mathrm{O}_7 (\CC )$-representation
\[
V = (\Sigma^{1, 1, 1}_0 \CC^{7})^{\pm }\, .
\]
Then Theorem \ref{tO} holds for $V$.
\end{proposition}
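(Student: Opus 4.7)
By Remark \ref{rSO} (applied after noting that $V\oplus\CC^7$ is generically free for $\mathrm{O}_7$ in either sign case), the statement reduces to the rationality of $V/\mathrm{O}_6$. Applying the branching rule \ref{fSODec1} to $V=\Sigma^{(1,1,1)}_0\CC^7=\Lambda^3\CC^7$ yields
\[
V|_{\mathrm{O}_6} = W_1 \oplus W_2, \quad W_1 = (\Sigma^{(1,1,1)}_0\CC^6)^{\emptyset}, \quad W_2 = (\Lambda^2\CC^6)^{\epsilon},
\]
with $\dim W_1 = 20$, $\dim W_2 = 15$; here $W_2|_{\mathrm{SO}_6}$ is the adjoint representation $\mathfrak{so}_6$, and $\epsilon\in\{+,-\}$ records the $\mathrm{O}_6$-twist on the second summand.

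My plan is to exploit the Cartan subalgebra section for the adjoint action on $W_2$. Let $T\subset\mathrm{SO}_6$ be a maximal torus with Lie algebra $\mathfrak{t}\subset\mathfrak{so}_6$. Since generic elements of $\mathfrak{so}_6$ are regular semisimple, $\mathfrak{t}$ is an $(\mathrm{O}_6,N_{\mathrm{O}_6}(T))$-section of $W_2$ whose relative Weyl group $N_{\mathrm{O}_6}(T)/T$ is contained in $B_3 = \mathfrak{S}_3\ltimes(\ZZ/2)^3$. Consequently
\[
V/\mathrm{O}_6 \;\sim_{\mathrm{bir}}\; (W_1\oplus\mathfrak{t})/N, \qquad N:=N_{\mathrm{O}_6}(T).
\]
A direct weight computation (via the accidental isomorphism $\mathrm{SO}_6\cong\mathrm{SL}_4/\{\pm I\}$, under which $W_1\cong\mathrm{Sym}^2\CC^4\oplus\mathrm{Sym}^2(\CC^4)^{\vee}$) shows that the $T$-weights on $W_1$ are $\pm\epsilon_i$ with multiplicity $2$ and $\pm\epsilon_1\pm\epsilon_2\pm\epsilon_3$ with multiplicity $1$; these weights span the character lattice $X^*(T)\cong\ZZ^3$.

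I then take the quotient in two stages. Since $T$ is a connected solvable group acting linearly on $W_1\oplus\mathfrak{t}$ with character lattice spanned by the above weights, Miyata's theorem gives that $(W_1\oplus\mathfrak{t})/T$ is birationally $\CC^{20}$. On this $\CC^{20}$, the residual finite group $N/T$ acts monomially: it permutes and signs the chosen $T$-invariant generators (each a product of weight-vector coordinates with exponent vector in the kernel of the character map), and it acts on $\mathfrak{t}$ as the reflection representation of $B_3$ (or its subgroup $W(D_3)=\mathfrak{S}_4$, depending on $\epsilon$). Rationality of the resulting monomial quotient follows by standard arguments: the abelian sign-part $(\ZZ/2)^k\subset N/T$ yields a rational quotient by Fischer's theorem, and the complementary symmetric-group action by $\mathfrak{S}_3$ or $\mathfrak{S}_4$ has rational quotient by Noether's theorem, together giving the rationality of $(W_1\oplus\mathfrak{t})/N$ and hence of $V/\mathrm{O}_6$.

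The main obstacle is the final step: verifying that the $N/T$-action on a suitable set of $T$-invariant generators is genuinely monomial, and explicitly matching it with the standard signed-permutation action on the weight diagram, so that the classical Fischer/Noether rationality can be applied. Once this is done, all other steps are formal consequences of Remark \ref{rSO}, the branching rule, the adjoint section, and Miyata's theorem.
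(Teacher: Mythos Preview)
Your reduction to $\mathrm{O}_6$ and the use of the Cartan section in $W_2=\mathfrak{so}_6$ coincide with the paper's opening moves; the paper likewise arrives at the quotient of $\Lambda^3\CC^6\oplus\mathfrak{t}$ by $N=\ZZ/2\ZZ\ltimes N_{\mathrm{SO}_6}(T)$ (your $N_{\mathrm{O}_6}(T)$), and your weight description of $W_1$ matches its splitting $W_1=V_b\oplus V_s$. The divergence is in the endgame. The paper does \emph{not} try to analyze $(W_1\oplus\mathfrak{t})/N$ directly; instead it observes that $V_b$ is generically free for $N$, so by the no-name lemma the remaining piece $V_s\oplus\mathfrak{t}\oplus\CC$ (note the extra $\CC$: only stable rationality of level $1$ is needed, not outright rationality of $V/\mathrm{O}_6$) may be swapped for any $12$-dimensional generically free $N$-representation with rational quotient. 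It then takes $\CC^6\oplus\CC^6$ and computes that quotient by hand via an explicit $(\ZZ/2\ZZ)^3$-section and an $\mathfrak{S}_3$-linearization.

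Your endgame, by contrast, has a genuine gap. After the torus quotient, the residual $N/T$-action on $\CC(W_1)^T$ is a \emph{multiplicative} (lattice) action, not a linear one, and Fischer's theorem and Noether's theorem simply do not apply in that setting: rationality of multiplicative invariant fields is a delicate question in general (this is exactly the arena of Saltman's counterexamples to Noether's problem). Your phrase ``permutes and signs the chosen $T$-invariant generators'' elides the distinction between a signed permutation representation (linear, where Fischer would apply) and a monomial action that may invert generators (where it does not). Miyata's rationality of $(W_1\oplus\mathfrak{t})/T$ also comes without any $N/T$-equivariant identification with $\CC^{20}$, so you cannot transport the finite-group action to a linear one for free. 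To salvage your route you would need a specific argument that the relevant $B_3$-lattice is (stably) permutation, or an explicit transcendence basis on which $B_3$ acts linearly; this is real work, not a citation. The paper's no-name swap sidesteps the whole issue by moving to a representation ($\CC^6\oplus\CC^6$) where the quotient can be written down explicitly.
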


\begin{proof}
Let us consider $V= \Lambda^3 \CC^7 = (\Sigma^{1, 1, 1}_0 \CC^{7})^{-}$. We have to prove stable rationality of level $7$ for
$V/\mathrm{O}_7 (\CC )$. This reduces to a stable rationality result of level $1$ for the quotient of the representation
\[
\Lambda^3 \CC^6 + \mathfrak{so}_6 (\CC )
\]
for the natural action by the group $\mathrm{O}_6 (\CC ) $. The action of $\mathrm{O}_6 (\CC )$ on $\mathfrak{so}_6 (\CC )$ has a
linear $(\mathrm{O}_6 (\CC ), \; \ZZ/2\ZZ \ltimes N(T))$-section given by a Cartan subalgebra $\mathfrak{t}\simeq \CC^3 \subset
\mathfrak{so}_6 (\CC )$. Here $N(T)$ is of course the normalizer of a maximal torus in $\mathrm{SO}_6 (\CC )$.

\

The residual representation
\[
\Lambda^3 \CC^6 
\]
decomposes with respect to $\ZZ/2\ZZ \ltimes N(T)$ into two summands $V_s$ and $V_b$ (corresponding to two Weyl group orbits of weights). Here
\begin{gather*}
V_b = \bigoplus_{\chi= ( \chi_1, \chi_2, \chi_3 )} V_{\chi }
\end{gather*}
where the sum is over all six weights $\chi$ of $\Lambda^3 \CC^6 $ which have precisely one nonzero entry $1$ or $-1$. Thus a basis of weight vectors in $W$ is given by $e_i \wedge e_{-i} \wedge e_{\pm j }$, $i, j \in \{ 1,2,3\}$, $i \neq j$. The other summand is
\begin{gather*}
V_s = \bigoplus_{\chi'= ( \chi'_1, \chi'_2, \chi'_3 )} V_{\chi' }
\end{gather*}
where $\chi'$ denotes one of the eight weights $(\pm 1 , \pm 1, \pm 1)$ with weight vectors $e_{\pm 1}\wedge e_{\pm 2 } \wedge e_{\pm 3}$. It follows that the $V_{\chi }$ have dimension $2$ whereas $V_{\chi '}$ has dimension $1$. Thus $\dim V_b =12$, $\dim V_s = 8$ (subscripts indicating \emph{b}ig resp. \emph{s}mall). The action of $\ZZ /2\ZZ \ltimes N(T)$ on $V_b$ is generically free. To see this, remark that the torus $T$ acts clearly generically freely in $V_b$; and the Weyl group $\mathfrak{S}_3 \ltimes (\ZZ /2 \ZZ )^3$ of $\mathrm{O}_6 (\CC )$ acts on the $T$-orbits in $V_b$ generically freely. 

\

Thus it is sufficient to produce a generically
free representation $R$ for $\ZZ/2\ZZ \ltimes N(T)$, of dimension $\le 12$ and with a rational quotient; then the stable rationality
of level $1$ of $\Lambda^3\CC^6 + \mathfrak{so}_6 (\CC )$ will be proven. In fact
\[
\dim V_s + \dim \mathfrak{t} +1 = 12\, .
\]

\

What we will take is $\CC^6 + \CC^6$. Here $\CC^6$ is the restriction of the standard $\mathrm{O}_6 (\CC )$-representation $\CC^6$ to $\ZZ / 2 \ZZ \ltimes N(T)$. Now $\ZZ /2 \ZZ \ltimes N (T)$ is nothing but the Weyl group $\mathfrak{S}_3 \ltimes (\ZZ /2 \ZZ )^3$ of $\mathrm{O}_6 (\CC )$ extended by the torus $T\simeq (\CC^{\ast })^3$. Here, with respect to our standard basis
\[
e_1, \: e_2, \: e_3, \quad e_{-1}, \: e_{-2} , \: e_{-3}
\]
an element $\sigma \in \mathfrak{S}_3$ acts via permuting $1, 2, 3$ (and subjects $-1,-2,-3$ to the same permutation), and the standard generators of $( \ZZ /2\ZZ )^3$ act by interchanging $e_i$ and $e_{-i}$. An element $t= \mathrm{diag}(t_1, t_2, t_3) \in T$ acts by scalings $e_i \mapsto t_i e_i$, $e_{-i} \mapsto t_i^{-1} e_{-i}$, $i=1, 2 , 3$.  We denote by $x_{\pm 1}, x_{\pm 2}, x_{\pm 3}$ coordinates in the space $\CC^6$ with respect to the basis $e_{\pm i}$. It is clear that $x_1x_{-1}$, $x_2x_{-2}$, $x_{3}x_{-3}$ give coordinates on the $T$-orbits on which the Weyl group acts. The ineffectivity kernel is $H=( \ZZ /2 \ZZ )^3$ which is hence equal to the stabilizer in general position in $\CC^6$. Certainly, $(\CC^6)^H$ has dimension $3$ and is spanned by $e_i + e_{-i}$, $i=1, 2, 3$. The normalizer of $H$ in the extension of the Weyl group by the torus $T$ is equal to
\[
N(H)  = \mathfrak{S}_3 \ltimes ( (\ZZ /2 \ZZ )^3 \times \langle \mathrm{diag} (\pm 1, \pm 1 , \pm 1) \rangle )\, .
\]
Here $\langle \mathrm{diag} (\pm 1, \pm 1 , \pm 1) \rangle =  ( \ZZ /2 \ZZ )^3$ is the subgroup of $T$ consisting of matrices with entries $t_1$, $t_1$, $t_3$ either $+1$ or $-1$. In fact, if the equation
\[
(\sigma, \: t ) \cdot (\iota , \: 1) \cdot (\sigma , \: t )^{-1} = (\sigma \iota \sigma^{-1} , \: t (\sigma \iota \sigma^{-1}) (t^{-1})) \in H 
\]
holds for some fixed $\sigma$ in the Weyl group, a fixed $t \in T$, and all $\iota \in H$, then we always get the equations $t_1^2 = t_2^2 = t_3^2 =1$ as equivalent to the condition that $(\sigma \iota \sigma^{-1}) (t^{-1}) =1\in T$. Thus it remains to prove rationality of 
\[
\left( \CC^3 + \CC^6  \right ) / N(H) \, .
\]
The action of $N(H)$ on $\CC^6$ is generically free. The six lines spanned by 
\[
(e_1 \pm e_{-1}) , \; (e_2 \pm e_{-2}), \;  (e_3 \pm e_{-3})
\]
are the $( \ZZ /2\ZZ )^6$-eigenspaces. The quotient $\CC^6 / (( \ZZ /2\ZZ )^6)$ has an action of the group $\mathfrak{S}_3$ which is birationally equivalent to a linear action. In fact denoting by $y_{\pm 1} , \: y_{\pm 2} ,\: y_{\pm 3}$ coordinates with respect to the basis given by the preceding vectors spanning the eigenspaces, we see that 
\[
(y_1, \dots , y_{-3} ) \quad \mathrm{and} \quad (y_1', \dots , y_{-3}')
\]
are in the same $( \ZZ /2\ZZ )^6$-orbit if and only if $y_i^2 = (y_i' )^2$ for all $i$. So the map $\CC^6 \to \CC^6$ given by squaring the coordinates $y_i$ induces a birational map of $\CC^6 / (\ZZ /2\ZZ )^6$ and $\CC^6$ which is $\mathfrak{S}_3$-equivariant for the action of $\mathfrak{S}_3$ on $z_i = y_i^2$ given by permuting the $z_1, \: z_2, \: z_3$ and $z_{-1}, \: z_{-2}, \: z_{-3}$. Thus $\CC^6$ splits as twice the standard representation $\CC^3$ of $\mathfrak{S}_3$ whose quotient is rational.
\end{proof}

Note that with the proof of Proposition \ref{pExteriorPowerO}, we have completed the proof of Theorem \ref{tO} as follows from Propositions \ref{pInductionStepO}, \ref{pFurtherExceptionsO} and \ref{pInductionBaseO}. 

\

For the group $\mathrm{SO}_N (\CC )$ we can only prove slightly less, cf. Remark \ref{rProblemSO}.

\begin{theorem}\xlabel{tSO}
Let $N\ge 4$. Suppose that $V$ is an irreducible R-representation for $\mathrm{SO}_N (\CC )$. 
\begin{itemize}
\item[(1)]
If $N =2n$ and $V = \Sigma^{c, \dots , c, \pm c }_0 \CC^{2n}$, then the following holds: 
if $V$ is already generically free for
$\mathrm{SO}_N (\CC )$, then $V/\mathrm{SO}_N (\CC )$ is stably rational of level $2N$. Otherwise, $(V\oplus
\CC^N)/\mathrm{SO}_N (\CC )$ is stably rational of level $2N$.
\item[(2)]
In all other cases, we have: if $V$ is generically free for
$\mathrm{SO}_N (\CC )$, then $V/\mathrm{SO}_N (\CC )$ is stably rational of level $N$. Otherwise, $(V\oplus
\CC^N)/\mathrm{SO}_N (\CC )$ is stably rational of level $N$.
\end{itemize}
\end{theorem}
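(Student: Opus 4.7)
The plan is to parallel the proof of Theorem \ref{tO}, and then to use Theorem \ref{tO} itself as an input in the exceptional case (1). First I would establish the $\mathrm{SO}_N(\CC)$-analogue of Lemma \ref{lO}: if an irreducible R-representation $V$ of $\mathrm{SO}_N(\CC)$ has $V|_{\mathrm{SO}_{N-1}(\CC)}$ containing two distinct irreducible summands $V'$ and $W$, with $V'$ itself an R-representation of $\mathrm{SO}_{N-1}(\CC)$ and $\dim W\ge 2(N-1)$, then the Severi--Brauer method of Proposition \ref{pInductionStepSL} and Lemma \ref{lO} reduces Theorem \ref{tSO} for $V$ to the corresponding statement for $V'$, preserving the level. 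The induction base is $N=4$: by the accidental isomorphism $\mathrm{SO}_4(\CC)=(\mathrm{SL}_2(\CC)\times \mathrm{SL}_2(\CC))/\langle(-1,-1)\rangle$ in Remark \ref{rSOCase34}, the problem reduces to well-known rationality results for products of $\mathrm{SL}_2(\CC)$-actions on binary forms.

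For case (2), $V=\Sigma^{\lambda}_0\CC^N$ with $\lambda$ not of the form $(c,\dots,c,\pm c)$, inspection of the branching formulas (\ref{fSODec1}) and (\ref{fSODec2}) together with the list of E-representations in Remark \ref{rStabilizerSO} produces the two summands required by the inductive step: one choice of interlacing weight $\mu$ gives an R-representation $V'$ of $\mathrm{SO}_{N-1}(\CC)$, another gives a summand $W$ of dimension at least $2(N-1)$. A finite list of small-rank or small-weight anomalies (similar to $(\Sigma^{1,1,1}_0\CC^7)^{\pm}$ and $(\Sigma^{2,1}_0\CC^5)^{\pm}$ handled in Propositions \ref{pFurtherExceptionsO} and \ref{pExteriorPowerO}) has to be disposed of separately, by an ad hoc argument such as passing to a Cartan-subalgebra section and analyzing the induced action of the Weyl group together with the translations coming from the unipotent radical. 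Propagating the induction from $N=4$ then yields stable rationality of level $N$ as claimed.

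For case (1), $V=\Sigma^{c,\dots,c,\pm c}_0\CC^{2n}$, the double-interlacing condition in (\ref{fSODec2}) forces $\mu_i=c$ for all $i$, so $V|_{\mathrm{SO}_{2n-1}(\CC)}=\Sigma^{c,\dots,c}_0\CC^{2n-1}$ is irreducible and the preceding induction step is unavailable. I would pass to $\mathrm{O}_{2n}(\CC)$ instead: the sum $\tilde V:=V\oplus V'$ with $V':=\Sigma^{c,\dots,c,\mp c}_0\CC^{2n}$ is the irreducible $\mathrm{O}_{2n}(\CC)$-representation of $\emptyset$-type, and is an R-representation for $\mathrm{O}_{2n}(\CC)$ by the analysis of Appendix \ref{AppendixOrthogonalGroup}. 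Theorem \ref{tO} gives stable rationality of level $2n$ for $(\tilde V\oplus\CC^{2n})/\mathrm{O}_{2n}(\CC)$. The descent to $\mathrm{SO}_{2n}(\CC)$ combines two steps: the no-name lemma, which identifies $(V\oplus V')/\mathrm{SO}_{2n}(\CC)$ birationally with $V/\mathrm{SO}_{2n}(\CC)\times V'$ (since $V$ and $V'$ are nonisomorphic $\mathrm{SO}_{2n}(\CC)$-representations, making $V'$ a generically trivial vector bundle over $V/\mathrm{SO}_{2n}(\CC)$); and the trivialization of the $\ZZ/2\ZZ=\mathrm{O}_{2n}(\CC)/\mathrm{SO}_{2n}(\CC)$-double cover of $\tilde V/\mathrm{O}_{2n}(\CC)$ by $\tilde V/\mathrm{SO}_{2n}(\CC)$, accomplished by adjoining the sign representation $\CC_{\mathrm{sgn}}$ or, equivalently, by exploiting the $(-1)$-eigenspace of $g_0$ inside $\CC^{2n}$ already present in the data. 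The main difficulty I anticipate is the level bookkeeping: the $\dim V'=\dim V$ extra coordinates from the no-name factor would \emph{a priori} blow up the level, but they can be absorbed via the elementary birational equivalence $\CC^a\times\PP^b\simeq\PP^{a+b}$ into the projective factor supplied by Theorem \ref{tO}, yielding the bound $2N$. This controlled doubling relative to case (2) is exactly what distinguishes the two cases.
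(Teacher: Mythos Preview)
Your plan has two genuine gaps, one in each case.

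\textbf{Case (2).} The stabilizer of a generic point of $\CC^N$ inside $\mathrm{SO}_N(\CC)$ is $\mathrm{O}_{N-1}(\CC)$, not $\mathrm{SO}_{N-1}(\CC)$ (Remark~\ref{rSO}). Hence the section argument behind Lemma~\ref{lO} reduces an $\mathrm{SO}_N$-question to an $\mathrm{O}_{N-1}$-question, not an $\mathrm{SO}_{N-1}$-question. Your proposed induction $\mathrm{SO}_N\to\mathrm{SO}_{N-1}$ has no geometric realisation via the standard representation; the extra $\ZZ/2\ZZ$ cannot simply be dropped. The paper handles this by Remark~\ref{rlSO}: the $\mathrm{SO}_N$-analogue of Lemma~\ref{lO} takes as input an $\mathrm{O}_{N-1}$-summand $V'$ and invokes the already-proven Theorem~\ref{tO} for it. The bookkeeping of which $\mathrm{SO}_{N-1}$-summands are exchanged by $g_0$ (and hence combine into a single $\mathrm{O}_{N-1}$-irreducible) is precisely what drives the analysis of Proposition~\ref{pInductionStepO}, and that analysis carries over verbatim.

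\textbf{Case (1).} Your descent via $\tilde V=V\oplus V'$ does not control the level. The no-name identification $(V\oplus V')/\mathrm{SO}_{2n}\simeq V/\mathrm{SO}_{2n}\times\CC^{\dim V'}$ means that even if you established rationality of $\tilde V/\mathrm{SO}_{2n}$ outright, you would only conclude that $V/\mathrm{SO}_{2n}$ is stably rational of level $\dim V'=\dim V$, and $\dim\Sigma^{c,\dots,c,\pm c}_0\CC^{2n}$ grows without bound in $c$. The birational equivalence $\CC^a\times\PP^b\simeq\PP^{a+b}$ does not let you trade a large affine factor of $V/\mathrm{SO}_{2n}\times\CC^{\dim V}$ for a small projective one; the factor sits on the wrong side. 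The paper's route (Proposition~\ref{pExceptionalCasesSO}) is different: since $V|_{\mathrm{O}_{2n-1}}$ is irreducible and hence Lemma~\ref{lO} is blocked at that stage, one restricts one step further to $\mathrm{O}_{2n-2}(\CC)$, where $V$ now contains the summands $(\Sigma^{c,\dots,c,c'}_0\CC^{2n-2})^{\emptyset}$ for $1\le c'\le c$, supplying both the R-representation $V'$ and the large $W$ needed for Severi--Brauer. Two restriction steps, each costing $O(n)$, give the bound $2N$; the exceptions $(2n,c)=(6,2)$ and $(8,1)$ are absorbed by Propositions~\ref{pFurtherExceptionsO} and~\ref{pExteriorPowerO}.
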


\begin{remark}\xlabel{rlSO}
Lemma \ref{lO} continues to hold if we take an $\mathrm{SO}_N (\CC )$-representation $V$ instead of an $\mathrm{O}_N (\CC
)$-representation if we replace Theorem \ref{tO} by Theorem \ref{tSO} in the conclusion, i.e. say: if Theorem \ref{tO} holds for $V'$, then Theorem \ref{tSO} holds for $V$. 
\end{remark}

The proof will follow from Theorem \ref{tO}. We first prove a Proposition.

\begin{proposition}\xlabel{pExceptionalCasesSO}
Part (1) of Theorem \ref{tSO} holds for all $N=2n \ge 6$.
\end{proposition}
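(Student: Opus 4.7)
The plan is to reduce the quotient by $\mathrm{SO}_{2n}(\CC)$ to a quotient by the smaller group $\mathrm{O}_{2n-1}(\CC)$ via Remark \ref{rSO}, and then to invoke Theorem \ref{tO}, which has already been proven. The key combinatorial input is that the branching rule (\ref{fSODec2}) applied to $\lambda = (c,\dots,c,\pm c)$ forces the interlacing $\mu_j$ to satisfy $c \ge \mu_j \ge c$, so all $\mu_j = c$: hence $V$ restricts to the \emph{single} irreducible $\mathrm{SO}_{2n-1}(\CC)$-module $\Sigma^{c,\dots,c}_0 \CC^{2n-1}$, and as an $\mathrm{O}_{2n-1}(\CC)$-module it is $(\Sigma^{c,\dots,c}_0 \CC^{2n-1})^{\pm}$ for some sign.

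First I would apply Remark \ref{rSO} to obtain the birational equivalence $(V \oplus \CC^{2n})/\mathrm{SO}_{2n}(\CC) \sim (V \oplus \CC)/\mathrm{O}_{2n-1}(\CC)$, where $\mathrm{O}_{2n-1}(\CC)$ acts via the embedding as the stabilizer of a generic line in $\CC^{2n}$. Then I would verify that the restricted $\mathrm{O}_{2n-1}(\CC)$-representation is still an R-representation in the sense of Definition \ref{dRReprOrthogonal}. For $n\ge 4$, so $2n-1\ge 7$, this follows at once from Remark \ref{rStabilizerSO}: the list of E-representations for $\mathrm{SO}_{2n-1}(\CC)$ consists of highest weights of the form $(0,\dots,0)$, $(1,0,\dots,0)$, $(2,0,\dots,0)$, $(1,1,0,\dots,0)$, none of which can be written as $(c,\dots,c)$ with $c\ge 1$. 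For $n=3$ I would use the accidental isomorphism of Remark \ref{rSOCase34}: $\Sigma^{c,c}_0 \CC^5 \cong \mathrm{Sym}^{2c} \CC^4$ as an $\mathrm{Sp}_4(\CC)$-module, which is an R-representation for $c \ge 2$ by Remark \ref{rERepresentationsSp}; and for $c=1$ the original $V = \Sigma^{1,1,\pm 1}_0 \CC^6$ is itself an E-representation of $\mathrm{SO}_6(\CC)$, so it lies outside the scope of Theorem \ref{tSO}(1).

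For generic freeness, since the stabilizer in $\mathrm{SO}_{2n}(\CC)$ of a generic point of $V \oplus \CC^{2n}$ is trivial, the stabilizer in the subgroup $\mathrm{O}_{2n-1}(\CC)$ is \emph{a fortiori} trivial on $V \oplus \CC$; splitting into the subcases whether $V$ alone is already generically free for $\mathrm{O}_{2n-1}(\CC)$ or only $V \oplus \CC^{2n-1}$ is, I would apply Theorem \ref{tO} to obtain stable rationality of level $2n-1$ for $V/\mathrm{O}_{2n-1}(\CC)$, respectively $(V\oplus\CC^{2n-1})/\mathrm{O}_{2n-1}(\CC)$. Combining this with the birational equivalence above and the no-name lemma to absorb the extra affine factors coming from the $\CC$ and $\CC^{2n-1}$ summands, I expect to reach stable rationality of level at most $4n-2$ for $V/\mathrm{SO}_{2n}(\CC)$ (respectively $(V\oplus\CC^{2n})/\mathrm{SO}_{2n}(\CC)$), which is bounded by $2N=4n$ as claimed.

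The main technical step is tracking generic freeness carefully through the restriction to the subgroup $\mathrm{O}_{2n-1}(\CC) \subset \mathrm{SO}_{2n}(\CC)$ so that the correct version of Theorem \ref{tO} can be invoked, together with the verification of the R-representation status at the boundary case $n=3$ via the accidental isomorphism of Remark \ref{rSOCase34}; the rest of the argument is a routine dimension count.
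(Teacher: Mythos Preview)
Your approach is correct and in fact more direct than the paper's. Both arguments begin by reducing via Remark \ref{rSO} from $\mathrm{SO}_{2n}(\CC)$ to $\mathrm{O}_{2n-1}(\CC)$, where $V$ restricts to the single irreducible $(\Sigma^{c,\dots,c}_0\CC^{2n-1})^{\pm}$. At this point you simply invoke Theorem \ref{tO}, which is already proven and applies to this R-representation; the level bookkeeping then closes at $\le 4n-2 < 2N$. The paper instead descends one further step to $\mathrm{O}_{2n-2}(\CC)$, where the representation becomes reducible, and applies the Severi--Brauer method (Lemma \ref{lO}) by exhibiting the summands $(\Sigma^{c,\dots,c,c}_0\CC^{2n-2})^{\emptyset}$ and $(\Sigma^{c,\dots,c,c-1}_0\CC^{2n-2})^{\emptyset}$. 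This forces the paper to treat two exceptional cases separately (namely $2n=6$, $c=2$ and $2n=8$, $c=1$, handled via Propositions \ref{pFurtherExceptionsO} and \ref{pExteriorPowerO}), whereas your route avoids these entirely because Theorem \ref{tO} already subsumes them. In effect, the paper re-derives by hand the instance of Theorem \ref{tO} that you quote as a black box.

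One small correction: your justification for $n=3$ appeals to Remark \ref{rERepresentationsSp}, but that remark is stated for $\mathrm{Sp}_{2n}$ with $n\ge 3$, so it does not cover $\mathrm{Sp}_4$. The conclusion that $\Sigma^{c,c}_0\CC^5$ is an R-representation for $c\ge 2$ is nonetheless correct and follows directly from Remark \ref{rStabilizerSO}, which lists the E-representations of $\mathrm{SO}_5(\CC)$ explicitly. Also note that in the case where $-I_{2n}$ acts trivially on $V$ (so $V$ is not generically free for $\mathrm{SO}_{2n}$), the element $-I_{2n-1}$, viewed inside $\mathrm{SO}_{2n}$ via the line stabilizer, coincides with $-I_{2n}$; hence $V'$ is then not generically free for $\mathrm{O}_{2n-1}$ either, and you must use the second alternative of Theorem \ref{tO}. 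Your case split anticipates this, but it is worth making explicit that the two ``not generically free'' cases match up, so the level count still closes within $4n$.
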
 

\begin{proof}
If we restrict a representation of the form $V = \Sigma^{c, \dots , c, \pm c }_0 \CC^{2n}$ to $\mathrm{O}_{2n-2}(\CC )$ we
obtain summands of the form
\[
( \Sigma^{c, \dots , c, c'} \CC^{2n-2})^{\emptyset }, \quad 1\le c' \le c \, .
\]
The representation
\[
( \Sigma^{c, \dots , c, c} \CC^{2n-2})^{\emptyset }
\] 
is an R-representation of $\mathrm{O}_{2n-2} (\CC )$ unless $2n=6$ and $c=2$ or $2n=8$ and $c=1$. Moreover, the representation
\[
( \Sigma^{c, \dots , c, c-1} \CC^{2n-2})^{\emptyset } \: \mathrm{if} \: c> 1 \: \mathrm{or} \: ( \Sigma^{1, \dots , 1, 0}
\CC^{2n-2})^{\pm } \: \mathrm{if} \: c=1 \, ,
\]
has dimension at least $2(N-2)$ unless $2n =6$ and $c=2$ or $2n=8$ and $c=1$. So we can conclude by the Severi-Brauer
method, but have to treat the two exeptional cases separately. The case $c=2$, $2n=6$ presents no problem since it follows from Proposition \ref{pFurtherExceptionsO}. So it remains to consider the case $c=1$, $2n=8$. This
follows from Proposition \ref{pExteriorPowerO}.
\end{proof}

We will now complete the proof of Theorem \ref{tSO}.

\begin{proof}
(of Theorem \ref{tSO}) Actually we have already done almost all the work. Let us assume $N\ge 5$ first. By Proposition \ref{pExceptionalCasesSO} we can assume  that $\Sigma_0^{\lambda } \CC^N$ is a representation of $\mathrm{SO}_{N} (\CC )$ which is not of the form $\Sigma_0^{c, \dots , c, \pm c } \CC^{2n}$. The proof of Proposition \ref{pInductionStepO} then shows that  for $N\ge 5$ the reduction of $V$ to the subgroup $\mathrm{O}_{N-1} (\CC )$ will contain an irreducible $R$-representation $V'$ for $\mathrm{O}_{N-1} (\CC )$ and an irreducible $\mathrm{O}_{N-1}$-representation $W$ of dimension $\ge 2(N-1)$ \emph{except if}  $V = \Sigma_0^{c, \dots , c, \pm c } \CC^{2n}$ \emph{or} $V = \Lambda^3 \CC^7$, $=\Sigma^{2,1}_0 \CC^5$ or $=\Sigma^{2,2}_0 \CC^5$. Hence we can conclude by the Severi-Brauer method and Theorem \ref{tO} for the full orthogonal group, which we have already established.\\
The proofs for the cases $V = \Lambda^3 \CC^7$, $\Sigma^{2,1}_0 \CC^5$ or $\Sigma^{2,2}_0 \CC^5$ are the same as in Propositions  \ref{pExteriorPowerO} and \ref{pFurtherExceptionsO}.\\
So we have obtained the full statement of Theorem \ref{tSO} now for $N\ge 5$. It remains to consider the case $N=4$.  The proof is entirely similar to the proof of Proposition \ref{pInductionBaseO}. Note that $\Sigma_0^{c, \pm c} \CC^4$ are $E$-representations. 
\end{proof}

\section{The case of the group $G_2$}\xlabel{sG2}

We recall some facts about $G_2$ and its representation theory. A \emph{composition algebra} over a field $k$ is
a (not necessarily associative)
$k$-algebra $C$ with an identity $1$ and a nondegenerate quadratic form $N$ on $C$ such that $N(xy) =N(x)N(y)$ (all $x$ and $y$ in
$C$). $N$ is called the norm form, and the associated bilinear form will be denoted by $\langle \cdot , \cdot \rangle$. In any
composition algebra $C$ one may introduce an involutive anti-automorphism $C\to C$, $x\mapsto \bar{x}$, called the
\emph{conjugation}, defined by $\bar{x} := -s_{1} (x)$ where $s_1$ is the reflection in the subspace $1^{\perp}$ defined by $\langle
\cdot , \cdot \rangle$. Then $x\bar{x} =\bar{x}x =N(x) \cdot 1$.\\
Now let $\mathbb{H} := \mathrm{Mat}_{2\times 2} (\CC )$ be the \emph{split quaternion algebra} (over $\CC$) of two by two complex
matrices which is a composition algebra with norm form the determinant. If
\[
x = \left( \begin{array}{cc} a & b \\ c & d \end{array}\right)
\]
then consequently
\[
\bar{x} = \left( \begin{array}{cc} d & -b \\ -c & a \end{array}\right)\, .
\]
The \emph{split octonion algebra} $\mathbb{O}$ is the composition algebra which is constructed from $\mathbb{H}$ by the process of
\emph{doubling}: as vector space $\mathbb{O} = \mathbb{H} \oplus \mathbb{H}$ and the product resp. norm form are given by
\[
(x, \: y) (u, \: v) := (xu +\bar{v}y, \: vx +y\bar{u}), \quad x, y,u,v \in \mathbb{H}
\]
resp.
\[
N((x, \: y)) := \det (x) -\det (y) , \quad x, \: y \in \mathbb{H}\, .
\]
We view $\mathbb{H}$ as embedded into $\mathbb{O}$ as the first factor, and $1_{\mathbb{O}} = (1_{\mathbb{H}}, \: 0)$. The group of
automorphisms $\mathrm{Aut} (\mathbb{O})$ of the composition algebra $\mathbb{O}$ (which means unital, norm-preserving algebra
automorphisms) is the simple group of type $G_2$, $G_2\subset O(N, \: \mathbb{O})$. Its dimension is $14$. It has two fundamental
representations: the adjoint $V_{14}=\mathfrak{g}_2$ and the traceless octonions $V_7 =1_{\mathbb{O}}^{\perp } \subset
\mathbb{O}$. If we choose a maximal torus in $G_2$, denote by $\alpha_1$ resp. $\alpha_2$ the long resp. short simple root in a
basis for the root system and let
$\omega_1$ and $\omega_2$ be the corresponding fundamental weights, then
$V_{\omega_1} = V_{14}$, $V_{\omega_2}= V_7$. The irreducible $G_2$-modules are thus indexed by highest weights $\lambda = m_1
\omega_1 + m_2 \omega_2$ where $m_1, \: m_2 \ge 0$ are integers.

\begin{lemma}\xlabel{lStabilizerG2}
The stabilizer of a generic point in $V_7$ inside $G_2$ is $H= \mathrm{SL}_3 (\CC )$ and its normalizer $N(H)$ is
\[
N(H) = \mathbb{Z}/2\mathbb{Z} \ltimes \mathrm{SL}_3 (\CC ) \, .
\]
The subspace of $H$-invariants in $V_7$ is one-dimensional.
\end{lemma}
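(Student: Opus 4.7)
The plan is to work with a generic traceless octonion $v \in V_7$ (normalized so $N(v) \neq 0$) and exploit the two-dimensional subalgebra it generates. Since $v \in 1^{\perp}$ we have $\bar v = -v$, hence $v^2 = -v\bar v = -N(v)$, so scaling we may assume $v^2 = -1$. The subalgebra $A := \CC\langle 1,v\rangle$ is then isomorphic to $\CC[t]/(t^2+1) \simeq \CC\times\CC$ via the orthogonal idempotents
\[
e_1 := \tfrac12(1+iv), \qquad e_2 := \tfrac12(1-iv), \qquad e_1+e_2 = 1, \quad e_1e_2 = e_2e_1 = 0.
\]
Any automorphism $\varphi \in G_2$ fixing $v$ must fix $A$ pointwise and therefore fix the pair $(e_1,e_2)$.

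Next I would introduce the Peirce decomposition of $\mathbb{O}$ with respect to $(e_1,e_2)$: writing $\mathbb{O}_{ij} = \{x\in\mathbb{O}\mid e_ix = \delta_{1i}x,\ xe_j = \delta_{1j}x\}$ (or the appropriate characterization for the non-associative Cayley algebra) one gets a splitting $\mathbb{O} = \CC e_1 \oplus \CC e_2 \oplus \mathbb{O}_{12} \oplus \mathbb{O}_{21}$ with $\dim\mathbb{O}_{12}=\dim\mathbb{O}_{21}=3$, the summands $\mathbb{O}_{12}$ and $\mathbb{O}_{21}$ being totally isotropic and paired nondegenerately by $\langle\cdot,\cdot\rangle$. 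The stabilizer $H$ of $v$ preserves all four Peirce components. Using the multiplicative structure, the products $\mathbb{O}_{12}\times\mathbb{O}_{12}\to \mathbb{O}_{21}$ and $\mathbb{O}_{21}\times\mathbb{O}_{21}\to\mathbb{O}_{12}$ (given by $x\cdot y=\overline{xy}$ after appropriate identification) together with the pairing identify $\mathbb{O}_{12}$ with some $W\simeq\CC^3$ and $\mathbb{O}_{21}$ with $\Lambda^2 W \simeq (\CC^3)^{\vee}$; compatibility with these wedge/pairing structures then forces $H$ to act on $W$ by $\mathrm{SL}_3(\CC)$ in its standard representation (and on $\mathbb{O}_{21}$ by the dual). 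Conversely any such $\mathrm{SL}_3(\CC)$-transformation extends uniquely to an automorphism of $\mathbb{O}$ fixing $1$ and $v$, so $H = \mathrm{SL}_3(\CC)$. This step - matching the non-associative multiplication on $\mathbb{O}_{12}\oplus\mathbb{O}_{21}$ with the wedge/contraction on $\CC^3\oplus(\CC^3)^{\vee}$ - is the main technical point.

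For the normalizer, an element $\sigma \in N(H)$ permutes the $H$-stable lines $\CC e_1$ and $\CC e_2$, so either fixes them (and thus lies in $H$) or swaps them; the swap is realized by an explicit involutive automorphism of $\mathbb{O}$ sending $v\mapsto -v$ (equivalently, coming from the outer automorphism $g\mapsto (g^t)^{-1}$ of $\mathrm{SL}_3(\CC)$ together with the interchange $\mathbb{O}_{12}\leftrightarrow \mathbb{O}_{21}$ dictated by the Cayley-Dickson construction). Hence $N(H)/H = \ZZ/2\ZZ$ and the extension splits, giving $N(H) = \ZZ/2\ZZ \ltimes \mathrm{SL}_3(\CC)$.

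Finally, under $H = \mathrm{SL}_3(\CC)$ the decomposition of $V_7 = 1^{\perp}$ reads $V_7 = \CC\cdot v \oplus \mathbb{O}_{12}\oplus\mathbb{O}_{21} \simeq \CC \oplus \CC^3 \oplus (\CC^3)^{\vee}$, and since the standard representation and its dual contain no $\mathrm{SL}_3(\CC)$-invariants, $V_7^H = \CC\cdot v$ is one-dimensional, as claimed.
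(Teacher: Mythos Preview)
Your proof is correct and follows essentially the same approach as the paper's. The paper works directly in the explicit Zorn vector-matrix model of $\mathbb{O}$ (where your idempotents $e_1, e_2$ appear as the diagonal matrix units and your Peirce components $\mathbb{O}_{12}, \mathbb{O}_{21}$ as the two off-diagonal copies of $\CC^3$), and then checks by hand that an automorphism fixing the diagonal must act as $(l, (l^t)^{-1})$ with $l \in \mathrm{SL}_3(\CC)$ on the off-diagonal blocks and exhibits the swap $\left(\begin{smallmatrix} a & v \\ w & b \end{smallmatrix}\right) \mapsto \left(\begin{smallmatrix} b & w \\ v & a \end{smallmatrix}\right)$ as the extra normalizing involution; your abstract Peirce-decomposition argument is a coordinate-free rephrasing of exactly this computation.
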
 

\begin{proof}
The stabilizers in general position in $V_7$ and $\mathbb{O}$ of course coincide. We use a model for $\mathbb{O}$ in terms of
vector matrices (see
\cite{Sp-Veld}, section 1.8): $\mathbb{O}$ can be realized as the algebra of vector matrices
\[
X= \left( \begin{array}{cc} a & v \\ w & b \end{array} \right) , \quad a, b\in\CC, \; v\in \CC^3, w\in \CC^3
\]
with multiplication
\[
\left( \begin{array}{cc} a_1 & v_1 \\ w_1 & b_1 \end{array} \right) \left( \begin{array}{cc} a_2 & v_2 \\ w_2 & b_2
\end{array} \right)
=
\left( \begin{array}{cc} a_1a_2+\langle v_1, w_2 \rangle  & b_2v_1+a_1v_2-w_1\times w_2 \\ 
a_2w_1 +b_1w_2 +v_1\times v_2 & b_1b_2+\langle w_1 , v_2 \rangle
\end{array}
\right)
\]
with conjugation and norm given by
\begin{gather*}
\bar{X} = \left( \begin{array}{cc} b & -v \\ -w & a \end{array} \right) , \quad N(X) = ab - \langle v, \: w \rangle \,
.
\end{gather*}
Here $\langle , \rangle$ denotes the standard scalar product on $\CC^3$, and the vector product
 is defined by $\langle v_1\times v_2, \: v_3\rangle = \det (v_1, v_2, v_3)$.  
An element of $G_2$ which preserves the diagonal matrices pointwise must leave the two copies of $\CC^3$ stable: suppose for
example that
$g$ maps 
\[
\left( \begin{array}{cc} 0 & 0 \\ v^{\ast} & 0 \end{array} \right)
\] 
to
\[
\left( \begin{array}{cc} 0 & w \\ w^{\ast} & 0 \end{array} \right), \: w\neq 0,
\]
then the compatibility with the product would force
\begin{gather*}
g \left( \left( \begin{array}{cc} 0 & 0 \\ v^{\ast} & 0 \end{array} \right) \cdot \left( \begin{array}{cc} a & 0 \\ 0 & b
\end{array} \right)  \right) = g\left(  \left( \begin{array}{cc} 0 & 0 \\ av^{\ast} & 0 \end{array} \right)\right) =\left(
\begin{array}{cc} 0 & aw \\ aw^{\ast} & 0 \end{array} \right)\\
=\left( \begin{array}{cc} 0 & w \\ w^{\ast} & 0 \end{array} \right)\cdot \left( \begin{array}{cc} a & 0 \\ 0 & b \end{array}
\right) = \left( \begin{array}{cc} 0 & bw \\ aw^{\ast} & 0 \end{array} \right)
\end{gather*}
which is impossible.
Thus if an element in $G_2$ preserves the subspace of diagonal matrices pointwise it must be of the form
\[
\left( \begin{array}{cc} a & v \\ v^{\ast } & b \end{array} \right) \mapsto \left( \begin{array}{cc} a & l(v) \\ m (v^{\ast })
& b
\end{array} \right)
\]
$l\, :\, \CC^3\to \CC^3$ and $m \, :\, \CC^3 \to \CC^3$ some invertible linear transformations which must satisfy $\langle l (v),
\: m(v^{\ast} )\rangle = \langle v, v^{\ast } \rangle $ for all $v\in \CC^3$, $v^{\ast }\in \CC^3$. It follows that $m
=(l^t)^{-1}$. On the other hand we also want $l$ to be compatible with the product in $\mathbb{O}$ which means in particular
\[
l(v_1)\times l(v_2) = (l^{t})^{-1} (v_1\times v_2)
\]
whence $\det (l(v_1), \: l(v_2), \: l(v_3)) = \det (v_1, \: v_2, \: v_3)$ and $l\in\mathrm{SL}_3 (\CC )$. This proves
$H=\mathrm{SL}_3 (\CC )$.\\
Note that the map
\[
\left( \begin{array}{cc} a & v \\ w & b \end{array} \right) \mapsto \left( \begin{array}{cc} b & w \\ v & a \end{array} \right)
\]
is in $G_2$, and conjugation by it induces on $\mathrm{SL}_3 (\CC )$ the outer automorphism $g\mapsto (g^t)^{-1}$. Hence the
statement on $N(H)$ follows. That the subspace of $H$-invariants in $V_7$ is one-dimensional is already clear from what was said
above.
\end{proof}

Thus we have for a generically free $G_2$ representation $V_{\lambda }$
\[
V_{\lambda } /G_2 + \CC^7 = (V_{\lambda } + V_7 )/G_2 = (V_{\lambda } + \CC )/( \ZZ/2\ZZ \ltimes \mathrm{SL}_3 (\CC ) )\, .
\]
We propose to prove

\begin{theorem}\xlabel{tG2}
If $V_{\lambda }$ is a generically free representation for the group $G_2$, then $V_{\lambda }/ ( \ZZ/2\ZZ \ltimes \mathrm{SL}_3
(\CC ) )$ is rational, hence $V_7/G_2$ is stably rational of level $7$.
\end{theorem}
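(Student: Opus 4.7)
The plan is to reduce the stable rationality of $V_\lambda/G_2$ at level $7$ to rationality of $V_\lambda/N(H)$, and then to prove this rationality by analyzing $V_\lambda$ under restriction to $H=\mathrm{SL}_3(\CC)$.

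By Lemma~\ref{lStabilizerG2}, the one-dimensional subspace $V_7^H\subset V_7$ is a $(G_2,N(H))$-section. Combined with the no-name lemma applied in the $V_\lambda$-direction, this yields that $(V_\lambda\oplus V_7)/G_2$ is birational to $(V_\lambda\oplus \CC)/N(H)$, hence to $V_\lambda/N(H)\times \mathbb{A}^1$. Since $V_\lambda$ is $G_2$-generically free, another application of the no-name lemma also identifies $(V_\lambda\oplus V_7)/G_2$ with $V_\lambda/G_2\times \mathbb{A}^7$. Hence once rationality of $V_\lambda/N(H)$ is established, $V_\lambda/G_2$ is stably rational of level $7$, giving in particular the stable rationality claim advertised in the theorem.

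For the rationality of $V_\lambda/N(H)$, I would restrict $V_\lambda$ to $H=\mathrm{SL}_3(\CC)$. A generic point of $V_\lambda$ has trivial $G_2$-stabilizer and a fortiori trivial $H$-stabilizer, so $V_\lambda|_H$ is again generically free. The extra $\mathbb{Z}/2\mathbb{Z}$ factor in $N(H)$ acts on $\mathrm{SL}_3(\CC)$ by the outer automorphism $g\mapsto (g^t)^{-1}$, exchanging $\Sigma^{a,b}\CC^3$ with $\Sigma^{b,a}\CC^3$ in the $\mathrm{SL}_3$-decomposition of $V_\lambda|_H$. My approach would be to locate an $N(H)$-invariant dual pair $\CC^3\oplus(\CC^3)^\vee$ inside $V_\lambda|_H$ (present for all but a short list of small $\lambda$) and use generic transitivity of $\mathrm{SL}_3(\CC)$ on $\CC^3$ to produce an $(\mathrm{SL}_3,\mathrm{ASL}_2)$-section. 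The complementary $\mathrm{SL}_3$-summands then form a vector bundle over this section, and one should invoke Theorem~\ref{tSLIrreducible} for $\mathrm{SL}_3(\CC)$ together with the Severi--Brauer vector bundle technique of Proposition~\ref{pInductionStepSL} to conclude rationality of the resulting quotient.

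The hardest part will be upgrading stable rationality of $V_\lambda/\mathrm{SL}_3(\CC)$ (which follows from Theorem~\ref{tSLIrreducible}) to genuine rationality of $V_\lambda/N(H)$, because the outer involution interchanges the dual $\mathrm{SL}_3$-summands and this non-trivial twist must be shown to be absorbable, e.g.\ by a torus action with rational quotient on the rational model produced by the $\mathrm{SL}_3$-argument. In addition, the handful of small generically free $V_\lambda$ whose $H$-restriction does not contain the desired pair $\CC^3\oplus(\CC^3)^\vee$ will need an ad hoc case-by-case treatment in the spirit of the base cases handled in Sections~\ref{sInductionBaseSL} and~\ref{sInductionProcessSp}.
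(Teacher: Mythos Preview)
Your reduction in the first paragraph is exactly right and matches the paper. The difficulties are in how you propose to prove rationality of $V_\lambda/N(H)$.

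First, a concrete error: the $(N(H),\,\cdot\,)$-section afforded by $\CC^3\oplus(\CC^3)^\vee$ does not have section group $\mathrm{ASL}_2$. The involution in $N(H)$ swaps the two summands, so the stabilizer of a generic pair $(v,v^\vee)$ in $N(H)$ is $\mathrm{SL}_2(\CC)\rtimes\ZZ/2\ZZ$ (the $\ZZ/2$ acting via the outer automorphism), not an affine group. This is precisely the group the paper works with in Proposition~\ref{pSL3Z2}.

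Second, invoking Theorem~\ref{tSLIrreducible} or the Severi--Brauer method of Proposition~\ref{pInductionStepSL} does not help here: those are statements about $\mathrm{SL}_n$-quotients, and you yourself flag that passing from $\mathrm{SL}_3$ to $N(H)=\ZZ/2\ltimes\mathrm{SL}_3$ is the crux. You leave that crux as ``the hardest part'' without a mechanism.

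The paper's route is close to yours in spirit but organized so that this difficulty disappears. Rather than hunting for $\CC^3\oplus(\CC^3)^\vee$ inside $V_\lambda$, the paper uses the explicit $G_2\supset\mathrm{SL}_3$ branching law (Gelfand--Zetlin patterns) to locate inside $V_\lambda$ an $N(H)$-stable pair $V(a,b)\oplus V(b,a)$ with $V(a,b)$ generically free for $\mathrm{SL}_3$, together with a further summand of dimension $\ge 6$. Proposition~\ref{pSL3Z2} then shows, by \emph{adding} $\CC^3\oplus(\CC^3)^\vee$ and passing to the $(N(H),\mathrm{SL}_2\rtimes\ZZ/2)$-section, that $(V(a,b)\oplus V(b,a))/N(H)$ is stably rational of level $6$; the extra summand absorbs this level via the no-name lemma. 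A short case analysis (on $m_1,m_2$, including $V_{2\omega_2}$) handles the few $\lambda$ where the generic pair and the extra summand cannot be chosen distinct. So your instinct to reduce via $\CC^3\oplus(\CC^3)^\vee$ is the right one, but it should be used to prove a clean auxiliary statement (Proposition~\ref{pSL3Z2}) rather than applied directly inside $V_\lambda$, and the resulting section group is $\mathrm{SL}_2\rtimes\ZZ/2$, whose quotients are easily seen to be stably rational of level $4$.
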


The branching law if we restrict a $G_2$-representation $V_{\lambda }$ to $\mathrm{SL}_3 (\CC )$ is known (\cite{Per}) and can
be described as follows: a Gelfand-Zetlin pattern associated to $\lambda = m_1 \omega_1 + m_2 \omega_2$ is a diagram $\mu
(a,b,c)$ 
\begin{gather*}
\begin{array}{ccccc}
m_1 +m_2 &   &  m_2 &   &  0\\
         & a &      & b &   \\
         &   &  c   &   &   
\end{array}
\end{gather*}
where $a$, $b$ and $c$ are nonnegative integers with $m_1 +m_2 \ge a  \ge m_2 \ge b \ge 0$ and $a\ge c\ge b$, so that the second
row of the diagram interlaces the first row, and the third row interlaces the second row. Then the decomposition as
$\mathrm{SL}_3 (\CC )$-module is
\[
V_{\lambda } = \bigoplus_{\mu (a,b,c)} \Sigma^{(m_1 + c, \: a-m_2 + b , \: 0 )} \CC^3 \, ,
\]
the sum running over all Gelfand-Zetlin patterns $\mu (a,b,c )$ associated to $\lambda$.

\

We need 

\begin{proposition}\xlabel{pSL3Z2}
Let $V(a, b)$ be a generically free representation of $\mathrm{SL}_3 (\CC )$. Then 
\[
(V(a, b) + V(b , a) ) / N(H)
\]
is stably rational of level $6$.
\end{proposition}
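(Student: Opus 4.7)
The plan is a two-step slice reduction combined with a final $\mathrm{SL}_2(\CC)$-quotient argument.

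First, the no-name lemma \cite{Bo-Ka} reduces stable rationality of level $6$ to a rationality statement: since $V(a,b)+V(b,a)$ is generically free for $N(H)=\ZZ/2\ZZ\ltimes\mathrm{SL}_3(\CC)$, one has
\[
(V(a,b)+V(b,a))/N(H)\times(\CC^3+(\CC^3)^\vee)\;\sim\;(V(a,b)+V(b,a)+\CC^3+(\CC^3)^\vee)/N(H),
\]
where $\mathrm{SL}_3(\CC)$ acts standardly on $\CC^3+(\CC^3)^\vee$ (dimension exactly $6$) and the $\ZZ/2\ZZ$ swaps the two summands via $e_i\leftrightarrow e_i^\vee$. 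Hence it suffices to show the right-hand side is rational.

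Second, the two-dimensional subspace $S=\{(\lambda e_1,\mu e_1^\vee):\lambda,\mu\in\CC\}\subset\CC^3+(\CC^3)^\vee$ is a $(N(H),\ZZ/2\ZZ\ltimes\mathrm{GL}_2(\CC))$-section: the setwise stabilizer of $S$ in $\mathrm{SL}_3(\CC)$ is the Levi subgroup $\mathrm{GL}_2(\CC)=\{\mathrm{diag}(\alpha,A):\alpha\det A=1\}$, acting on $S$ by $(\lambda,\mu)\mapsto(\alpha\lambda,\alpha^{-1}\mu)$, and the $\ZZ/2\ZZ$-swap preserves $S$ via $(\lambda,\mu)\leftrightarrow(\mu,\lambda)$. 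This reduces the problem to rationality of $(V(a,b)+V(b,a)+S)/(\ZZ/2\ZZ\ltimes\mathrm{GL}_2(\CC))$.

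Third, observe that the swap $\sigma$ is absorbed into the $\mathrm{GL}_2(\CC)$-orbits on $S$, since $\sigma(\lambda,\mu)=(\mu,\lambda)=\alpha\cdot(\lambda,\mu)$ with $\alpha=\mu/\lambda$. A $\CC^\ast$-slice in $S$ (say $\{(\lambda,1)\}$), on which the subgroup $\mathrm{SL}_2(\CC)\subset\mathrm{GL}_2(\CC)$ acts trivially, then reduces the problem to showing rationality of a variety of the form $((V(a,b)+V(b,a))|_{\mathrm{SL}_2(\CC)})/\mathrm{SL}_2(\CC)\times\CC$ twisted by a residual $\ZZ/2\ZZ$-action. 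The double branching $\mathrm{SL}_3(\CC)\downarrow\mathrm{GL}_2(\CC)\downarrow\mathrm{SL}_2(\CC)$ yields
\[
V(a,b)\big|_{\mathrm{SL}_2(\CC)}=\bigoplus_{a+b\ge\mu_1\ge b\ge\mu_2\ge 0}\mathrm{Sym}^{\mu_1-\mu_2}\CC^2,
\]
which contains the standard representation $\CC^2$ as a summand whenever $V(a,b)$ is generically free for $\mathrm{SL}_3(\CC)$ (the cases $\mu_1=b+1,\mu_2=b$ or $\mu_1=b,\mu_2=b-1$). A generic vector in such a $\CC^2$ summand gives an $(\mathrm{SL}_2(\CC),\mathbb{G}_a)$-section, reducing the $\mathrm{SL}_2(\CC)$-quotient to a $\mathbb{G}_a$-quotient of a vector space, which is rational by Rosenlicht's theorem.

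The main obstacle will be carrying the residual $\ZZ/2\ZZ$-action through the successive slicings and handling the resulting twisted $\ZZ/2\ZZ$-action on the final affine space; however, after the $\mathbb{G}_a$-reduction the $\ZZ/2\ZZ$ acts linearly on a vector space, and its quotient is automatically rational, completing the argument.
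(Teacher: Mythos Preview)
Your first two reductions are essentially the paper's: add $\CC^3+(\CC^3)^\vee$ by no-name, then slice to the stabilizer $\mathrm{SL}_2(\CC)\rtimes\ZZ/2\ZZ$. The gap is in your last step, where you try to kill $\mathrm{SL}_2(\CC)$ via a single $\CC^2$ summand of $V(a,b)$ and then assert that the residual $\ZZ/2\ZZ$ acts linearly after the $\mathbb{G}_a$-reduction.

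The problem is that this $\CC^2\subset V(a,b)$ is not $\ZZ/2\ZZ$-stable: the involution swaps $V(a,b)$ and $V(b,a)$, so it carries your $\CC^2$ to a copy of $(\CC^2)^\vee$ inside $V(b,a)$. Hence the point you use for the $(\mathrm{SL}_2,\mathbb{G}_a)$-section is not fixed by the $\ZZ/2\ZZ$, and the ``residual $\ZZ/2\ZZ$'' you obtain on the slice is a twisted element $g\cdot\sigma$ whose action on the remaining representation is no longer linear in any evident way. Your ``absorption'' remark only says that $\sigma$ and $\CC^\ast$ have the same orbits \emph{on $S$}; it says nothing about the action on $V(a,b)+V(b,a)$, which is where the difficulty lies. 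The final sentence (``after the $\mathbb{G}_a$-reduction the $\ZZ/2\ZZ$ acts linearly'') is therefore an unjustified leap: a $\mathbb{G}_a$-quotient is only \emph{birationally} a vector space, and there is no reason the twisted involution should linearize on it.

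The paper's cure is simple and worth noting: instead of a single $\CC^2$, use the $\ZZ/2\ZZ$-stable pair $\CC^2+(\CC^2)^\vee$ (one copy in $V(a,b)$, its dual in $V(b,a)$, swapped by $\sigma$). A generic point there has stabilizer exactly $\ZZ/2\ZZ$ inside $\mathrm{SL}_2(\CC)\rtimes\ZZ/2\ZZ$, so one lands on a linear $\ZZ/2\ZZ$-quotient in one clean step. This costs dimension $4$ rather than $2$, so one also needs a generically free $\mathrm{SL}_2(\CC)\rtimes\ZZ/2\ZZ$-summand together with a complement of dimension $\ge 4$; the paper secures this for $a+b\ge 5$ using the summands $\mathrm{Sym}^{a+b}\CC^2$ and $\mathrm{Sym}^{a+b-1}\CC^2$, and checks the three remaining small cases $V(1,2)+V(2,1)$, $V(4,0)+V(0,4)$, $V(3,1)+V(1,3)$ by hand.
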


\begin{proof}
Since
\begin{gather*}
(V(a, b) + V(b , a) + \CC^3 + (\CC^3)^{\vee } ) / N(H) \\= (V(a, b) + V(b, a))/ (\mathrm{SL}_2 (\CC )\rtimes (\ZZ /2\ZZ ))
\end{gather*}
we study the decomposition of $V(a, b)$ as $\mathrm{SL}_2 (\CC )$-module. Note that generically free $\mathrm{SL}_2 (\CC
)\rtimes (\ZZ /2\ZZ )$-modules are certainly stably rational of level $4$ (add $\CC^2 + \CC^2 = \CC^2 + (\CC^2)^{\vee }$ and
reduce to $\ZZ/2\ZZ$). So it suffices to note that in the decomposition of $V(a, b)$ there will be the
$\mathrm{SL}_2 (\CC )$-modules $V(a+b)$ and $V(a+b-1)$: one of them is generically free for $\mathrm{SL}_2 (\CC )$ for $a+b\ge
5$, and under this assumption the other one will have dimension $\ge 4$ whence the result.\\
Thus the remaining cases to consider separately are
\[
V(1, 2) + V(2, 1) , \quad V(4, 0) + V(0, 4), \quad V(3, 1) + V(1, 3) \, .
\] 
In each of these cases we get a copy $\CC^2 + \CC^2$ in the decomposition and reduce to $\ZZ/2\ZZ$ as above.
\end{proof}

\begin{proof} (of Theorem \ref{tG2})
From the branching law we see that $V_{\lambda }$ contains in the $\ZZ/2\ZZ \ltimes \mathrm{SL}_3 (\CC
)$-decomposition ($(a,b,c) = (m_1 +m_2 , m_2, m_1 +m_2) $)
\begin{gather*}
\Sigma^{( 2m_1 + m_2, \: m_1 + m_2, \: 0 )} \CC^3 + \Sigma^{( 2m_1 + m_2, \: m_1 + m_2, \: 0 )} (\CC^3)^{\vee } \\= V(m_1,
m_1+m_2) + V(m_1+m_2 , m_1)
\end{gather*}
in the case where $m_2\neq 0$ and $3$ does not divide $m_2$. If $m_2\neq 0$ and $3$ divides $m_2$ we may take instead ($(a,b,c) =
(m_1 +m_2 , m_2-1, m_1 +m_2) $)
\begin{gather*}
\Sigma^{( 2m_1 + m_2, \: m_1 + m_2-1, \: 0 )} \CC^3 + \Sigma^{( 2m_1 + m_2, \: m_1 + m_2-1, \: 0 )} (\CC^3)^{\vee } \\= V(m_1+1,
m_1+m_2-1) + V(m_1+m_2-1 , m_1+1)\, .
\end{gather*}
Note that for $a-b$ not divisible by $3$ there are only the following E-representations for $\mathrm{SL}_3 (\CC )$:
\[
V(0, 1), \; V(1, 0), \; V(2, 0), \; V(0, 2)\, .
\]
We may encounter one of these cases only if 
\begin{quote}
$m_1=0$, $m_2\le 2$  in the case $m_2 \neq 0$.  
\end{quote}
This leaves only the case $V_{\lambda } = V_{2\omega_2}$. In this case we have the decomposition
\[
V_{2\omega_2} = \mathrm{Sym}^2 \CC^3 + \mathrm{Sym}^2 (\CC^3)^{^\vee } + \mathrm{Ad}_0 \CC^3 + \CC^3 + (\CC^3)^{\vee } + \CC \, .
\]
We can reduce to a rationality question for $\mathrm{SL}_2 (\CC ) \rtimes \ZZ /2\ZZ$ because this is the stabilizer of a generic
point in $\CC^3 + (\CC^3)^{\vee }$. But with respect to that group $\mathrm{Sym}^2 \CC^3 + \mathrm{Sym}^2 (\CC^3)^{^\vee }$ has a
summand $\CC^2 + (\CC^2)^{\vee }$ so that we can reduce further to $\ZZ/2\ZZ$ and are done.

\

Otherwise one of the above two representations will be generically free for $N(H)= \ZZ/2\ZZ \ltimes \mathrm{SL}_3 (\CC )$ and will
be composed of generically free summands for $\mathrm{SL}_3 (\CC )$. Moreover, it is clear that both $V(m_1,
m_1+m_2) + V(m_1+m_2 , m_1)$ and $V(m_1+1,
m_1+m_2-1) + V(m_1+m_2-1 , m_1+1)$ have dimension $\ge 6$ in this case. So the question remains when the two will be distinct
representations in the decomposition. This will always be the case \emph{unless} $m_2=1$. Thus in this special case we take as
above
\[
V(m_1, m_1 +1) + V(m_1 +1, m_1)
\]
as a subrepresentation which is composed of generically free $\mathrm{SL}_3 (\CC )$-representations (as $m_1\ge 1$ now because
$V_{\lambda }$ is generically free). But then we get also as a subrepresentation (for $(a,b,c)= (m_1, 1, m_1)$)
\[
\Sigma^{(2m_1, m_1, 0)} \CC^3 = V(m_1, m_1)
\]
which has certainly dimension $\ge 6$ (as the adjoint has already dimension $8$).

\

We turn to the case where $m_2 =0$. Then certainly $m_1\ge 2$ and $V_{\lambda }$ contains ($(a,b,c) = (m_1, 0, m_1-1)
$)
\begin{gather*}
\Sigma^{( 2m_1 -1, \: m_1 , \: 0 )} \CC^3 + \Sigma^{( 2m_1 -1, \: m_1 , \: 0 )} (\CC^3)^{\vee } \\= V(m_1-1,
m_1) + V(m_1 , m_1-1)\, .
\end{gather*}
It is composed of generically free summands for $\mathrm{SL}_3 (\CC )$. We likewise find the representation
\begin{gather*}
\Sigma^{( 2m_1 , \: m_1 , \: 0 )} \CC^3 = V(m_1, m_1) 
\end{gather*}
in the decomposition ($(a,b,c) = (m_1, 0, m_1)
$), and since $m_1\ge 2$, its dimension is $\ge 6$.
\end{proof}

\appendix
\section{Generic Stabilizers for $\mathrm{O}_{2n} (\CC )$}\xlabel{AppendixOrthogonalGroup}
The purpose of this Appendix is to prove the following technical result which is needed in Section \ref{sOrthogonalGroups}; we retain the notation regarding the orthogonal groups introduced at the beginning of that section. 

\begin{theorem}\xlabel{tOrthogonalsgp}
Let $n\ge 2$ be an integer. Let $V=\Sigma^{\lambda }_0 \CC^{2n}$ be an irreducible representation of $\mathrm{SO}_{2n} (\CC )$ with $\lambda_n =0$. Suppose the ineffectivity kernel of the action on $V$ coincides with the stabilizer in general position in $\mathrm{SO}_{2n} (\CC )$ (i.e., in the terminology of the paper, $V$ is an R-representation of $\mathrm{SO}_{2n}(\CC )$). Then for each of the two extensions $V^+$ and $V^-$ of the representation $V$ to an $\mathrm{O}_{2n} (\CC )$-representation this property is preserved: the stabilizer in general position for $V^+$ or $V^-$ in $\mathrm{O}_{2n} (\CC )$ coincides with the ineffectivity kernel of the action of $\mathrm{O}_{2n} (\CC )$ on $V^+$ or $V^-$.
\end{theorem}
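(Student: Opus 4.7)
The plan is to fix an intertwining involution $\phi \in \mathrm{GL}(V)$ implementing the outer automorphism $\tau(h) = g_0 h g_0^{-1}$ of $\mathrm{SO}_{2n}(\CC)$. Since $\lambda_n = 0$, the $\mathrm{SO}_{2n}(\CC)$-module $V$ is $\tau$-self-dual, so such a $\phi$ exists; Schur's lemma combined with rescaling lets us normalize $\phi^2 = 1$. The two extensions $V^+$ and $V^-$ then correspond to declaring $g_0$ to act as $+\phi$, respectively $-\phi$. A first easy step is to pin down the ineffectivity kernel $K^\pm_{\mathrm{O}}$ of $\mathrm{O}_{2n}(\CC)$ on $V^\pm$: its intersection with $\mathrm{SO}_{2n}(\CC)$ is $K_{\mathrm{SO}}$ by hypothesis, and any extra element $g_0 h$ in $K^\pm_{\mathrm{O}}$ would force $\rho(h) = \mp\phi$, which is impossible because $\phi$ implements the nontrivial outer automorphism and hence does not lie in $\rho(\mathrm{SO}_{2n}(\CC))$. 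Thus $K^\pm_{\mathrm{O}} = K_{\mathrm{SO}}$.

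With the kernel identified, the theorem is equivalent to the following: for generic $v \in V$, neither $\phi v$ nor $-\phi v$ lies in the $\mathrm{SO}_{2n}(\CC)$-orbit of $v$. Indeed, this precisely says that no element of the coset $g_0\mathrm{SO}_{2n}(\CC)$ fixes $v$, so that $\mathrm{Stab}_{\mathrm{O}_{2n}(\CC)}(v) = K_{\mathrm{SO}}$. Dually, the involutions $\pm\bar\phi$ that $\pm\phi$ induce on the categorical quotient $V/\!\!/\mathrm{SO}_{2n}(\CC)$ must be nontrivial.

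To prove this nontriviality I would argue by induction on $n$ using the branching rule of formula \ref{fSODec2}. Restricting $V^\pm$ to the subgroup $\mathrm{O}_{2n-1}(\CC) \subset \mathrm{O}_{2n}(\CC)$ stabilizing a generic nonzero vector in $\CC^{2n}$, one finds — outside a finite list of low-rank exceptions — an $\mathrm{O}_{2n-1}(\CC)$-summand that is itself an irreducible R-representation of $\mathrm{O}_{2n-1}(\CC)$. A cross-section argument in the spirit of Lemma \ref{lO}, using the $(\mathrm{O}_{2n}(\CC), \mathrm{O}_{2n-1}(\CC))$-section in $V \oplus \CC^{2n}$ furnished by Remark \ref{rSO}, then propagates the inductive conclusion upward from $\mathrm{O}_{2n-1}(\CC)$ to $\mathrm{O}_{2n}(\CC)$. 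The induction base is furnished by $\mathrm{SO}_4(\CC)$ and $\mathrm{SO}_6(\CC)$, handled via the accidental isomorphisms of Remark \ref{rSOCase34}: the representations rewrite in terms of $\mathrm{SL}_2(\CC)$- and $\mathrm{SL}_4(\CC)$-representations, and $g_0$ corresponds to an explicit outer symmetry whose nontrivial action on orbits can be exhibited by hand.

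The main obstacle is the handling of exceptional cases where the inductive restriction to $\mathrm{O}_{2n-1}(\CC)$ does not yield a usable R-summand — the analogues in this setting of the exceptions $(\Sigma^{1,1,1}_0 \CC^7)^{\pm}$, $(\Sigma^{2,1}_0 \CC^5)^{\pm}$ and $(\Sigma^{2,2}_0 \CC^5)^{\pm}$ encountered in Proposition \ref{pInductionStepO}. For each such $V$ I would exhibit a concrete generic $v \in V$, expressed in weight coordinates for a maximal torus $T \subset \mathrm{SO}_{2n}(\CC)$, and verify that $\pm\phi v \neq \rho(h)v$ for every $h \in \mathrm{SO}_{2n}(\CC)$. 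Concretely this amounts to producing an $\mathrm{SO}_{2n}(\CC)$-invariant polynomial whose values on $v$ and $\phi v$ differ — naturally a ``pseudo-invariant'' transforming by the $\det$-character of $\mathrm{O}_{2n}(\CC)$ — and checking its nonvanishing on the orbit of $v$ by a direct computation.
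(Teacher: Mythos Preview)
Your setup in the first paragraph is correct: the identification $K^\pm_{\mathrm{O}} = K_{\mathrm{SO}}$ is fine, and the reformulation ``for generic $v$, neither $\phi v$ nor $-\phi v$ lies in the $\mathrm{SO}_{2n}(\CC)$-orbit of $v$'' is exactly what has to be shown.

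The inductive scheme in your second and third paragraphs, however, has a genuine gap. A cross-section for the pair $(\mathrm{O}_{2n}(\CC), \mathrm{O}_{2n-1}(\CC))$ lives in $V\oplus\CC^{2n}$, not in $V$; it lets you compute the stabilizer in general position of $V\oplus\CC^{2n}$ in terms of the stabilizer in general position of $V\oplus\CC$ under $\mathrm{O}_{2n-1}(\CC)\times\ZZ/2\ZZ$. But adding the summand $\CC^{2n}$ can only \emph{shrink} the generic stabilizer, so this argument runs in the wrong direction: it tells you nothing about the generic stabilizer of $V$ itself. Put differently, knowing that no element of a fixed copy of $\mathrm{O}_{2n-1}(\CC)$ outside the centre fixes a generic $v$ does not preclude that some $\mathrm{O}_{2n}(\CC)$-conjugate of such an element does. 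What is missing is a mechanism controlling the whole $\mathrm{O}_{2n}(\CC)$-conjugacy class of each involution $h$ with $\det h=-1$, i.e.\ a bound of the form $\mathrm{codim}\,V^{h} > \dim(\mathrm{O}_{2n}(\CC)/Z_h)$ ensuring that $\mathrm{O}_{2n}(\CC)\cdot V^{h}$ is not Zariski-dense.

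This is precisely the route the paper takes, and it is quite different from yours. After the same preliminary reduction to involutions $h$ with $\det h=-1$, the paper fixes a representative with $k$ eigenvalues $-1$ ($k$ odd, $1\le k\le n$), notes $\dim(\mathrm{O}_{2n}/Z_h)=(2n-k)k\le n^2$, and then estimates $\mathrm{codim}\,R_{\pm}(h)$ from below by counting $h$-regular weights in Weyl-group orbits of weights of $R$. A short combinatorial analysis of orbit sizes (Lemmas~\ref{lWeylOrbit}--\ref{lBetterEstimate}) dispatches all $n\ge 6$; the cases $n=2,3,4,5$ are finished by sharper orbit counts using several weights at once. No induction on $n$ and no branching are used.

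Your final fallback, exhibiting an $\mathrm{SO}_{2n}(\CC)$-invariant that is not $\mathrm{O}_{2n}(\CC)$-invariant and does not vanish generically, is a correct reformulation of the whole statement rather than a device for a few exceptions; proving the existence of such a $\det$-semi-invariant for every $R$-representation is essentially equivalent to the theorem, so invoking it only for the residual cases does not close the argument.
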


The proof of this theorem will be broken up into several intermediate results. One has to show that a generic point in $V^{\pm }$ is not invariant under some $h\in\mathrm{O}_{2n} (\CC )$. Since $V$ is an R-representation of $\mathrm{SO}_{2n} (\CC )$ this implies that $h^2 = 1$ or $h^2 = -1$  (and $\det h = -1$). We claim that the case $h^2 = -1$ cannot occur. In this case, $h$ would be semisimple (diagonalizable), as it is of finite order, with eigenvalues $+i$ and $-i$. The respective eigenspaces for these eigenvalues are isotropic subspaces of $V^{\pm }$: for $v, w$ in the $i$-eigenspace, e.g., one has 
\[
\langle v, \: w \rangle  = \langle h(v), \: h(w)\rangle   = \langle i\cdot v , \: i\cdot w \rangle = - \langle v, \: w\rangle \, .
\]
As dimensions of isotropic subspaces cannot exceed $n$, but the dimensions of the $\pm i$ eigenspaces add up to $2n$, both the eigenvalue $i$ and the eigenvalue $-i$ of $h$ occur with multiplicity $n$ whence $\det h =1$, a contradiction.\\
Thus we can assume $h^2 = 1$ and $\det h = -1$. Let $T$ be a maximal torus of $\mathrm{O}_{2n} (\CC )$ with Lie algebra the Cartan algebra $\mathfrak{t}$, and let $W = W (\mathrm{O}_{2n}) = N(T) /T$ be the Weyl group of $\mathrm{O}_{2n}(\CC )$. We let $ \epsilon_i$, $i=1, \dots , n$ be the standard coordinate functions on $\mathfrak{t}$: thus we have $\epsilon_i (\mathrm{diag} (t_1, \dots , t_{2n})) = t_i$. We identify $\Lambda = \ZZ \cdot \epsilon_1 \oplus \dots \oplus \ZZ \cdot \epsilon_n$ with the weight lattice of $\mathrm{SO}_{2n} (\CC )$. Recall that we have then

\ 

\begin{center}
\begin{tabular}{l  l}
Roots: & $\pm \epsilon_i \pm \epsilon_j, \; 1 \le i\neq j \le n$\\
Simple roots: & $\epsilon_i -\epsilon_{i+1}, \; 1\le i \le n-1, \quad \epsilon_{n-1} + \epsilon_n$\\
Fundamental weights (of $\mathfrak{so}_{2n}$): & $\omega_i = \epsilon_1 + \dots + \epsilon_i, \; 1\le i \le n-2$, \\
         &  $\omega_{n-1}= \frac{1}{2} (\epsilon_1 + \dots + \epsilon_{n-1} - \epsilon_n )$, \\
          &  $ \omega_{n}= \frac{1}{2} (\epsilon_1 + \dots + \epsilon_{n-1} + \epsilon_n )$
\end{tabular}
\end{center}

\

and the Weyl group $W= W(\mathrm{O}_{2n} (\CC )) = ( \ZZ / 2\ZZ )^n \rtimes \mathfrak{S}_n$ where $(\ZZ /2\ZZ )^n$ acts on weights $a_1\epsilon_1 + \dots + a_n \epsilon_n =: (a_1, \dots , a_n)$ via sign changes of entries and $\mathfrak{S}_n$ by permutations; note that  the Weyl group of $\mathrm{SO}_{2n} (\CC )$ is strictly smaller, namely acts only by even numbers of sign changes on weights, but as $g_0$ (cf. Section \ref{sOrthogonalGroups} above) induces an element of $W (\mathrm{O}_{2n})$ and $g_0 \cdot (a_1, \dots , a_n ) = (a_1, \dots , -a_n )$, the larger Weyl group contains all sign changes.

\

The element $h$ is characterized -up to conjugacy in $\mathrm{O}_{2n} (\CC )$- by its numbers of $+1$ and $-1$ eigenvalues. Since the symbols $V^{\pm}$ are already in use,  we will write $R$ for any one of $V^{\pm }$ in the sequel, and denote the eigenspaces of $h$ corresponding to $+1$ resp. $-1$ by $R_{+}$ resp. $R_{-}$ (or sometimes by $R_+ (h)$, $R_{-} (h)$ if we want to indicate $h$).

\begin{lemma}\xlabel{lOrthogonalEstimate}
Assume that both eigenspaces $R_{+}$ and $R_{-}$ of $h$ have codimension $> \dim \mathrm{O}_{2n} (\CC ) - \dim Z_h$ where $Z_h$ is the centralizer of $h$ in $\mathrm{O}_{2n} (\CC )$. Then the stabilizer of a generic point in $R$ does not contain any element conjugate to $h$ or $-h$. 
\end{lemma}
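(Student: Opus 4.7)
The plan is to bound the dimension of the locus of ``bad'' points in $R$ — those fixed by some conjugate of $h$ or of $-h$ — by a standard incidence-variety argument, and show that under the codimension hypothesis this locus is proper in $R$.

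First, consider the conjugacy class $\mathcal{O}_h = \{ g h g^{-1} \mid g\in \mathrm{O}_{2n}(\CC)\}$. Since $\mathcal{O}_h \simeq \mathrm{O}_{2n}(\CC)/Z_h$, its dimension equals $\dim \mathrm{O}_{2n}(\CC) - \dim Z_h$. Form the incidence variety
\[
I_+ := \{ (g, v) \in \mathcal{O}_h \times R \mid g\cdot v = v\}\, .
\]
The first projection $\pi_1 \colon I_+ \to \mathcal{O}_h$ has fiber over $g$ the fixed subspace $R_+(g)$. For $g = k h k^{-1}$ one has $R_+(g) = k\cdot R_+(h)$, so every fiber has dimension $\dim R_+$. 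Hence
\[
\dim I_+ = \dim \mathcal{O}_h + \dim R_+ = (\dim \mathrm{O}_{2n}(\CC) - \dim Z_h) + \dim R_+\, ,
\]
and the hypothesis $\dim R - \dim R_+ > \dim \mathrm{O}_{2n}(\CC) - \dim Z_h$ gives $\dim I_+ < \dim R$. Consequently the second projection $I_+ \to R$ is not dominant, and its image is a proper closed subvariety $B_+ \subsetneq R$.

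For conjugates of $-h$ the same reasoning applies: one has $Z_{-h} = Z_h$, so the conjugacy class $\mathcal{O}_{-h}$ has the same dimension as $\mathcal{O}_h$, and the fiber of the analogous incidence variety $I_-$ over a point $k(-h)k^{-1}$ is $k\cdot R_+(-h) = k\cdot R_-(h)$, of dimension $\dim R_-$. Using the second half of the hypothesis we obtain $\dim I_- < \dim R$, hence its image $B_- \subsetneq R$ is likewise a proper closed subvariety.

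Therefore the complement $R \setminus (B_+ \cup B_-)$ is a nonempty (in fact dense) open subset of $R$, and for any $v$ in this complement the stabilizer $\mathrm{Stab}_{\mathrm{O}_{2n}(\CC)}(v)$ contains no element conjugate to $h$ or to $-h$. The key observation driving the whole argument is simply that the fiber dimension of the incidence variety is constant along a conjugacy class — no real obstacle here, since this is immediate from the equivariance of the fixed-point assignment $g \mapsto R_+(g)$ under conjugation; the substance of the lemma is entirely in the numerical codimension hypothesis that one is allowed to assume.
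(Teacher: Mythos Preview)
Your argument is correct and essentially identical to the paper's: both bound the dimension of $\mathrm{O}_{2n}(\CC)\cdot R_{+}(h)$ (respectively for $-h$) by $\dim\mathcal{O}_h + \dim R_{+}$ and compare with $\dim R$. One small slip: the identity $R_{+}(-h)=R_{-}(h)$ holds only when $-I$ acts by $-1$ on $R$; in general $R_{+}(-h)$ equals either $R_{+}(h)$ or $R_{-}(h)$ (as the paper notes), but since the hypothesis bounds the codimension of both eigenspaces, the conclusion is unaffected.
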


\begin{proof}
Note that the space $R_{+} (-h)$ is -depending on the parity of $n$- equal to either $R_{+}(h)$ or $R_{-} (h)$. It is thus sufficient to show that the conclusion of the Lemma holds for $h$ if $\mathrm{codim} R_{+} (h) > \dim \mathrm{O}_{2n} (\CC ) - \dim Z_h$. Suppose that the stabilizer in general position in $R$ would contain an element conjugate to $h$. Then $\mathrm{O}_{2n} (\CC ) \cdot R_+ (h)$ would be dense in $R$, but this contradicts the previous assumption on the codimension of $R_+ (h)$.
\end{proof}

\begin{remark}\xlabel{rSizeCentralizer}
\begin{itemize}
\item[(1)]
By the preceding lemma we may henceforth assume -replacing $h$ by $-h$ if necessary- that the number of $(-1)$-eigenvalues of $h$ is odd and $\le n$. We may then also take $h$ to be equal to (a representative of) an element in the normal subgroup $( \ZZ / 2\ZZ )^n \subset W$.
\item[(2)]
One has $\dim \mathrm{O}_{N} (\CC ) = N(N-1)/2$, hence, if $h$ has $k$ eigenvalues $-1$, $1 \le k \le n$, then  $Z_h = \mathrm{O}_k (\CC ) \times \mathrm{O}_{2n -k} (\CC )$ and 
\[
\dim (\mathrm{O}_{2n} (\CC) / Z_h) = (2n-k ) k \, . 
\]
The right hand side is 
\begin{align*}
\dim (\mathrm{O}_{2n} (\CC) / Z_h)&  =2n-1, \quad k=1, \\
   & = n^2, \quad\quad\quad k= n, \\
   & \le n^2\quad\quad\quad 1\le k  \le n \, .
\end{align*}
\end{itemize}
\end{remark}

The representation $R = V^{\pm}$ decomposes into weight spaces for the action of $T$ which in turn can be grouped into Weyl group orbits of the form
\[
R_{[\chi]} = \bigoplus_{w \in W / \mathrm{Stab}_W (\chi )} R_{w\cdot \chi }
\]
where $\chi$ is a weight of $R$ and $\mathrm{Stab}_W (\chi )$ the stabilizer of the weight $\chi$ in $W$ so that the orbit $[\chi ] = W / \mathrm{Stab}_W (\chi )$. The element $h$ acts on $R_{[\chi ]}$. 

\begin{definition}
We will call a weight $\mu$ in the orbit $[\chi ] = W\cdot \chi$ an $h$\emph{-regular weight} if $ \mu \neq h\cdot \mu $. The subset of weights $\mu \in [\chi ]$ with $h\cdot \mu = \mu$ will be denoted by $[\chi]^h$.
\end{definition}

\begin{lemma}\xlabel{lWeylOrbit}
Let $R_{[\chi ],\;  +}$ resp. $R_{[\chi ],\;  -}$ be the $+1$ resp. the $-1$-eigenspace of $h$ in $R_{[\chi ]}$. Then
\begin{gather*}
\mathrm{codim} \left(  R_{[\chi ],\;  \pm }  \right)  \ge \frac{1}{2} \left(  \mid [\chi ] \mid - \mid [ \chi ]^h \mid \right) \cdot \dim V_{\chi } \, .
\end{gather*}
\end{lemma}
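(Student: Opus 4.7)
The plan is to reduce this to a local bookkeeping argument on how $h$ permutes weight spaces. By Remark \ref{rSizeCentralizer}(1), we may assume $h$ is (a representative of) an element of $(\ZZ/2\ZZ)^n \subset W$, so in particular $h \in N(T)$. Hence $h$ acts on $R_{[\chi]}$ by sending the weight space $V_\mu$ isomorphically onto $V_{h\cdot\mu}$ for each $\mu \in [\chi]$. Note also that within a single Weyl orbit all weight spaces have the same dimension $d := \dim V_\chi$, since $W$ acts transitively on them via $N(T)/T$.

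Next I would organize $[\chi]$ into orbits under $\langle h \rangle$. Because $h^2 = 1$, these orbits are either singletons (exactly the $|[\chi]^h|$ fixed weights) or two-element sets $\{\mu, h\cdot \mu\}$ with $\mu \ne h\cdot \mu$; there are thus $\frac{1}{2}(|[\chi]| - |[\chi]^h|)$ such pairs. Correspondingly,
\[
R_{[\chi]} \;=\; \bigoplus_{\mu \in [\chi]^h} V_\mu \;\oplus\; \bigoplus_{\{\mu,h\cdot\mu\}} \bigl( V_\mu \oplus V_{h\cdot \mu} \bigr),
\]
and each summand on the right is $h$-stable, so the decomposition is compatible with taking $\pm 1$-eigenspaces of $h$.

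Then I would analyze each summand. On a pair $V_\mu \oplus V_{h\cdot \mu}$, write $f\colon V_\mu \to V_{h\cdot \mu}$ for the restriction of $h$; the condition $h^2 = 1$ forces $h\!\mid_{V_{h\cdot\mu}} = f^{-1}$, so the $+1$-eigenspace is $\{(v,f(v)) : v \in V_\mu\}$ and the $-1$-eigenspace is $\{(v,-f(v)) : v \in V_\mu\}$, both of dimension $d$. Hence the $\pm 1$-eigenspace inside this summand has codimension exactly $d$ in the $2d$-dimensional pair. On a singleton $V_\mu$, $h$ acts as some involution, so the $\pm 1$-eigenspace has codimension in $V_\mu$ somewhere between $0$ and $d$; in any case this contribution to the codimension is nonnegative.

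Summing the codimensions over all orbits yields
\[
\mathrm{codim}_{R_{[\chi]}}\!\bigl(R_{[\chi],\pm}\bigr) \;\ge\; \tfrac{1}{2}\bigl(|[\chi]| - |[\chi]^h|\bigr)\cdot d \;=\; \tfrac{1}{2}\bigl(|[\chi]| - |[\chi]^h|\bigr)\cdot \dim V_\chi,
\]
which is the desired inequality. There is no real obstacle here; the only point requiring care is the observation that $h$, being conjugate to an element of $(\ZZ/2\ZZ)^n$, may be taken to lie in $N(T)$, so that it genuinely permutes the $T$-weight spaces (otherwise the weight-space decomposition of $R_{[\chi]}$ would not be $h$-stable and the argument would not even start).
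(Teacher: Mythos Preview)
Your proof is correct and follows essentially the same approach as the paper's: you decompose $R_{[\chi]}$ into $h$-stable pieces indexed by the $\langle h\rangle$-orbits on $[\chi]$ and observe that on each $h$-regular pair $V_\mu \oplus V_{h\cdot\mu}$ the $\pm 1$-eigenspaces have equal dimension $d$. The paper compresses this into the single remark that the trace of $h$ on $V_\mu + V_{h\cdot\mu}$ is zero for $h$-regular $\mu$, which encodes exactly your explicit eigenspace computation.
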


\begin{proof}
Indeed, if $\mu\in [\chi ]$ is $h$-regular, then the trace of $h$ on $V_{\mu } + V_{h\cdot \mu }$ is $0$. 
\end{proof}

Thus everything comes down to producing for the representation $R$ a weight $\chi$ with $R_{\chi } \neq (0)$ such that $[\chi ]$ contains sufficiently many $h$-regular elements. We start with some easy observations. 

\begin{lemma}\xlabel{lWeightsNonzero}
If $n\ge 5$ and  $R$ contains a weight space $R_{\chi }$ such that for 
\[
\chi = (\chi_1, \dots , \chi_n )
\]
we have: (1) $\chi_i \neq 0$ for all $i$, and (2) there are $i, \: j $ with $| \chi_{i_0} | \neq |\chi_{j_0} |$, then we get the estimate 
\[
\mathrm{codim} \left(  R_{[\chi ],\;  \pm }  \right)  > n^2 \, ,
\]
so the conclusion of Lemma \ref{lOrthogonalEstimate} holds.
\end{lemma}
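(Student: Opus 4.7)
The plan is to apply Lemma \ref{lWeylOrbit} to the weight $\chi$ and then invoke Lemma \ref{lOrthogonalEstimate}. By Remark \ref{rSizeCentralizer}(1) I may assume $h$ is represented in $W$ by an element of $(\ZZ/2\ZZ)^n$, acting on the weight lattice by flipping the signs of the coordinates in some subset $S\subseteq \{1,\dots,n\}$ of odd cardinality; in particular $S\neq\emptyset$. The centralizer estimate in the second item of Remark \ref{rSizeCentralizer} already gives $\dim(\mathrm{O}_{2n}(\CC)/Z_h)\le n^2$, which is the threshold the codimension must beat in order to invoke Lemma \ref{lOrthogonalEstimate}.

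First I would verify $[\chi]^h=\emptyset$. Any $\mu\in [\chi]$ is a signed permutation of $\chi$, so hypothesis (1) guarantees $\mu_i\neq 0$ for every $i$; but $h\cdot \mu = \mu$ forces $\mu_i=-\mu_i$, i.e. $\mu_i=0$, for each $i\in S$, contradicting $S\neq\emptyset$. Lemma \ref{lWeylOrbit} therefore yields $\mathrm{codim}(R_{[\chi],\pm}) \ge \tfrac12 |[\chi]|\cdot \dim V_\chi \ge \tfrac12 |[\chi]|$. Then I would estimate $|[\chi]|$ from below: since all $\chi_i$ are nonzero the sign-change subgroup $(\ZZ/2\ZZ)^n$ acts freely on $\chi$, and a routine orbit count gives $|[\chi]| = 2^n\cdot n!/\prod_j m_j!$, where the $m_j$ are the multiplicities of the distinct values among $|\chi_1|,\dots,|\chi_n|$. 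Hypothesis (2) forces at least two distinct absolute values, and under this constraint $n!/\prod_j m_j!\ge n$, with the minimum $n$ attained by the partition $(n-1,1)$. Hence $|[\chi]|\ge 2^n n$ and $\mathrm{codim}(R_{[\chi],\pm})\ge 2^{n-1}n$.

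Since $2^{n-1}>n$ for $n\ge 5$, this already gives the required bound $\mathrm{codim}(R_{[\chi],\pm})>n^2$. Moreover the inclusion $R_\mp\supseteq R_{[\chi],\mp}$ transfers the same estimate to $\mathrm{codim}\,R_\pm$ inside all of $R$, so Lemma \ref{lOrthogonalEstimate} applies and yields its conclusion. The only nontrivial ingredient is the reduction to $h\in (\ZZ/2\ZZ)^n\subset W$ from Remark \ref{rSizeCentralizer}, which ensures that $h$ acts on weights by pure sign changes and that $S$ is nonempty; granted this, the argument is a short orbit-counting exercise and no further obstacle arises.
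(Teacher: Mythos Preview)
Your proof is correct and follows the same line as the paper's: both show $[\chi]^h=\emptyset$ (since every coordinate of every weight in the orbit is nonzero, while $h$ flips at least one sign) and then bound $|[\chi]|$ from below to apply Lemma~\ref{lWeylOrbit}. The only difference is cosmetic: the paper bounds $|[\chi]|\ge 2^{n+1}$ by the ad hoc observation ``$2^n$ sign changes times one swap of the entries with distinct absolute values'', whereas you compute the orbit size exactly as $2^n n!/\prod_j m_j!$ and observe that the multinomial factor is at least $n$, yielding the sharper bound $|[\chi]|\ge 2^n n$; either suffices for $n\ge 5$.
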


\begin{proof}
In this case, all elements in the orbit $[\chi ]$ are $h$-regular, and $(1/2) | [\chi ] | \ge (1/2)\cdot 2^{n+1} > n^2$ for $n\ge 5$. In fact, every such weight $\chi$ gives $2^n$ pairwise different weights by acting on it via sign changes of entries, and in each of these weights we can still interchange the entries whose absolute values are equal to $\chi_{i_0}$ resp. $\chi_{j_0}$. These then give $2^{n}\cdot 2 = 2^{n+1}$ pairwise distinct weights in the Weyl group orbit $[\chi ]$.
\end{proof}

We have to consider two special types of weights: weights with $|\chi_1 |   = \dots = |\chi_n | $, which we will temporarily call \emph{absolutely constant weights}, or weights which contain a zero entry $\chi_i =0$, which we will call \emph{degenerate weights}. 

\

\begin{remark}\xlabel{rPhiSaturatedness}
Except $W$-invariance, we will use one further property of the set of weights of an irreducible representation $R$ of a semisimple Lie group: this set is $\Phi$\emph{-saturated}, $\Phi$ being the set of roots $\alpha \in \mathfrak{t}^{\ast }$, cf. \cite{G-W}, p. 155 ff. This means that whenever $\chi$ is a weight of $R$, and $\alpha \in\Phi$ is a root, then
\begin{gather*}
\chi  - k \alpha \; \mathrm{is}\;  \mathrm{a}\;  \mathrm{weight}\; \mathrm{ of } \; R \; \mathrm{for}\; \mathrm{all}\;   k \; \mathrm{between}\; 0 \; \mathrm{and}\;  \langle \chi , \alpha^{\vee } \rangle
\end{gather*}
where $\alpha^{\vee}\in \mathfrak{t}$ is the coroot to $\alpha$; in terms of a $W$-invariant inner product  $(\cdot , \cdot )$ on $\mathfrak{t}^{\ast }$, one can identify $\alpha^{\vee }$ with $(2\alpha )/(\alpha, \alpha )$. 
\end{remark}

Applying the previous remark we get that every representation $R$ which has only absolutely constant or degenerate weights, has a weight $\chi$ with $\chi_n =0$: in fact, an absolutely constant weight $\chi$ which is not zero is in the same $W$-orbit as a weight $\chi'$ with $\chi_1' = \chi_2' = \dots = \chi_n' > 0$, and by Remark \ref{rPhiSaturatedness}, $\chi' - \chi'_n \cdot (\epsilon_{n-1} + \epsilon_n)$ is also a weight of $R$ and has last entry equal to zero. 

\begin{remark}\xlabel{rEstimateOrbitSizes}
We assume here now without loss of generality that $R$ has a weight $\chi$ which we write as 
\begin{gather*}
\chi = ( x^{0}_1,  \dots , x^{0}_{n_0}, \dots , \: x^{j}_1, \dots ,  x^{j}_{n_j}, \dots , x^{m}_1, \dots , \: x^{m}_{n_m} )
\end{gather*}
where $x^{j}_k = x^{j}_l$ for all $j$ and all $1 \le k, \: l \le n_j$, we have $x^j_{k} \neq x^h_l$ for $j \neq h$, and $x^0_1=\dots = x^0_{n_0} =0$. Moreover we can assume that all the $x^i_j$ are nonnegative. Thus in short, we just group entries with the same value in $\chi$ together and put the zeroes in front. We can also assume that $h\in (\ZZ/2 \ZZ )^n \subset W$ acts via sign changes in the first $k$ entries of a weight.\\
Let us first assume $k=1$. Then by elementary combinatorial considerations we have for the Weyl group orbit size 
\begin{gather*}
| [\chi ] | = 2^{n -n_0} \cdot \frac{n!}{n_0 ! \cdot   n_1 ! \cdot \ldots \cdot  n_m !}
\end{gather*}
whereas the weights in $[ \chi ]$ which are not regular are in number
\begin{gather*}
| [\chi ]^h | = 2^{n - n_0} \cdot \frac{(n-1)! } {(n_0 -1)! \cdot n_1 ! \cdot \ldots \cdot n_m!} 
\end{gather*} 
Hence the percentage of weights in the orbit $[\chi ]$ which is not $h$-regular for $h$ with one eigenvalue $-1$ is
\[
\frac{| [\chi ]^h | }{| [\chi ] | } = \frac{n_0} {n} \, .
\]
Finally we remark that the ratio $| [\chi ]^h |  :  | [\chi ] |$ will be smaller than this if $h$ has $k> 1$ eigenvalues $-1$ because every weight which is invariant under sign changes in the first $k$ entries is  also invariant under a sign change in only the first entry.
\end{remark}

\begin{lemma}\xlabel{lWeylOrbitEstimate}
In the set-up of Remark \ref{rEstimateOrbitSizes}, the Weyl group orbit cardinality
\begin{gather*}
| [\chi ] | = 2^{n -n_0} \cdot \frac{n!}{n_0 ! \cdot   n_1 ! \cdot \ldots \cdot  n_m !}\, ,
\end{gather*}
viewed as a function of the partition $n_0 + n_1 + \dots + n_m = n$, attains its maximum, for a fixed number of zero entries $n_0$, if all the $n_i$, $i=1, \dots , m$ are equal to $1$, and is then equal to
$2^{n-n_0} n! / n_0 !$.\\
It attains its minimum, again for fixed $n_0$, if $m=1$ and then its value is $2^{n-n_0} { n \choose n_0 }$.\\
The function $f(n_0) = 2^{n-n_0} {n \choose n_0 }$ increases monotonously from $f(0) = 2^n$  to its maximum at $n_0 = \left[ (n-2)/3 \right] +1$, and afterwards decreases monotonously to its value $f(n) = 1$.\\
Moreover, if the integer $z\le n$ satisfies
\[
z > \frac{n-2}{3}+1  \; \mathrm{and} \; { n \choose z } < 2^z \, ,
\]
then, among all those weights with less than or equal to $z$ zero entries,  the overall minimum of the Weyl group orbit cardinality is attained for a weight with precisely $z$ zeroes and all remaining $n-z$ entries of equal absolute values, and the value of this minimum is $2^{n - z }{n \choose z}$. 
\end{lemma}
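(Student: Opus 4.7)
The plan is to prove the four assertions of the lemma in turn, each reducing to an elementary combinatorial estimate.

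For the first two assertions, fix $n_0$ and observe that since $|[\chi]| = 2^{n-n_0}\, n!/(n_0!\, n_1! \cdots n_m!)$, it suffices to analyze the multinomial coefficient. The key combinatorial fact is that merging two positive parts $n_i, n_j$ into a single part of size $n_i+n_j$ multiplies the multinomial coefficient by $n_i!\,n_j!/(n_i+n_j)! = 1/\binom{n_i+n_j}{n_i} \le 1/2$, i.e.\ strictly decreases it; conversely, splitting a part of size $\ge 2$ strictly increases it. With $n_0$ fixed and the remaining $n-n_0$ distributed into positive parts, the multinomial is therefore maximized when every part equals $1$ (giving $n!/n_0!$) and minimized when there is a single part of size $n-n_0$ (giving $\binom{n}{n_0}$), which yields the stated extremal values.

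For the third assertion, I would analyze $f(n_0) = 2^{n-n_0}\binom{n}{n_0}$ via the ratio test:
\[
\frac{f(n_0+1)}{f(n_0)} \;=\; \frac{n-n_0}{2(n_0+1)} \;\ge\; 1 \quad\Longleftrightarrow\quad n_0 \le \frac{n-2}{3}.
\]
Hence $f(n_0+1) \ge f(n_0)$ exactly when $n_0 \le \lfloor (n-2)/3\rfloor$, so $f$ is weakly increasing on $\{0,1,\dots,\lfloor (n-2)/3\rfloor + 1\}$ and strictly decreasing on $\{\lfloor (n-2)/3\rfloor + 1,\dots,n\}$, establishing unimodality with maximum at $n_0 = \lfloor (n-2)/3\rfloor + 1$. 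The boundary values $f(0)=2^n$ and $f(n)=1$ are immediate.

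For the fourth assertion, the first two parts reduce the problem to minimizing $f(n_0)$ over $n_0 \in \{0,1,\dots,z\}$. The first hypothesis $z > (n-2)/3 + 1$ places $z$ strictly past the maximum of the unimodal $f$; since $f(n_0) \ge f(0) = 2^n$ for $0 \le n_0 \le \lfloor (n-2)/3\rfloor + 1$ and $f$ strictly decreases from there down to $n_0 = z$, the minimum on $\{0,\dots,z\}$ is attained at one of the two endpoints. The second hypothesis $\binom{n}{z} < 2^z$ is precisely $f(z) < 2^n = f(0)$, so the minimum equals $f(z) = 2^{n-z}\binom{n}{z}$, realized (by the second assertion applied at $n_0 = z$) exactly when the $n-z$ nonzero entries all lie in a single equal-absolute-value class. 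The whole argument is routine bookkeeping; the only mild subtlety is in the last part, where each hypothesis must play its distinct role—the first to push $z$ past the peak of $f$ so the minimum sits at an endpoint, and the second to decide which endpoint is smaller.
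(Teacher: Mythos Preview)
Your proof is correct and follows essentially the same approach as the paper's. The only stylistic difference is in the first two assertions: where the paper argues via surjections between Weyl-group orbits (viewing absolute values as ``colours'' and noting that identifying or distinguishing colours gives orbit surjections), you argue directly with the multinomial coefficient via the merge/split inequality $n_i!\,n_j!/(n_i+n_j)! \le 1/2$; these are two phrasings of the same combinatorial fact. Your ratio test and endpoint analysis for the third and fourth assertions match the paper's proof exactly.
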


\begin{proof}
The first two assertions are clear if we adopt the following viewpoint: think of the absolute values of the entries in the weights in $[\chi ]$ as colours, and think of their signs as charges; fix some number of zero entries $n_0$. Suppose $\chi'$ is the weight obtained from $\chi$ by changing all nonzero colours into the colour of the $x^{1}_j$, say. This gives a surjection of the Weyl group orbit of $\chi$ unto the Weyl group orbit of $\chi'$ whence the assertion about the minimum of the Weyl group orbit cardinality for fixed $n_0$.\\
Suppose we pass from the weight $\chi$ to another weight $\chi''$ by choosing for the $n_1$ equally coloured elements $x^{1}_j$ other $n_1$ pairwise distinct colours, and similarly for the $x^2_k$, up to the $x^{m}_l$, so that all the entries of $\chi''$ then have pairwise distinct colors. Then we get a surjection from the Weyl group orbit of $\chi''$ to the Weyl group orbit of $\chi$, resubstituting the old colours. Hence the assertion about the maximum of the Weyl group orbit cardinality for fixed $n_0$.  

\

For the last assertion, note that
\begin{gather*}
2^{n-n_0 -1} {n \choose n_0 +1 } = 2^{n-n_0} \cdot \frac{1}{2} \cdot \frac{n!}{(n_0 +1)! (n-n_0-1)!}\\
 = 2^{n-n_0} { n \choose n_0} \cdot \frac{n-n_0}{2(n_0 +1)}\, .
\end{gather*}
Now 
\begin{gather*}
 \frac{n-n_0}{2(n_0 +1)} \ge 1  \iff  n_0 \le \frac{n -2}{3} , \\
 \frac{n-n_0}{2(n_0 +1)} < 1  \iff  n_0 > \frac{n -2}{3}\, .
\end{gather*}
This means that the function $f(n_0) = 2^{n-n_0} { n \choose n_0Ê}$ increases monotonously from its value $f(0) = 2^n$ to its maximum (located at $\left[ (n-2)/3 \right] +1$), and then decreases to $f(n) = 1$. In particular, if the integer $z$ satisfies 
\[
z > \frac{n-2}{3}+1  \; \mathrm{and} \; 2^{n-z} { n \choose z } < 2^n 
\]
then we get that among weights with less than or equal to $z$ zero entries the overall minimum of the Weyl group orbit cardinality is attained for a weight with precisely $z$ zeroes and all remaining $n-z$ entries of equal absolute values, and the value of this minimum is $2^{n - z }{n \choose z}$. 
\end{proof}

\begin{lemma}\xlabel{lBetterEstimate}
Let $n \ge 10$ and suppose that $R$ contains a weight space belonging to a weight $\chi$ which can be written as in Remark \ref{rEstimateOrbitSizes}, and assume that $\chi$ contains $\ge 3$ nonzero entries (which, however, need not have distinct absolute values). Then the conclusion of Lemma \ref{lOrthogonalEstimate} holds. 
\end{lemma}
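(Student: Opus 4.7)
The plan is to apply Lemma~\ref{lOrthogonalEstimate} to every element $h\in \mathrm{O}_{2n}(\CC)$ that could stabilize a generic point of $R=V^{\pm}$. By the reduction preceding the statement we may assume $h\in (\ZZ/2\ZZ)^n\subset W$ acts by sign changes on $1\le k\le n$ coordinates, and Remark~\ref{rSizeCentralizer} gives the uniform upper bound $\dim\mathrm{O}_{2n}(\CC)-\dim Z_h\le n^2$. Hence it will be enough to prove the single inequality
\[
\mathrm{codim}_R R_\pm(h)>n^2.
\]

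First I will restrict attention to one Weyl-orbit summand: pick any orbit $[\chi]$ with $n-n_0\ge 3$ nonzero entries (such a $\chi$ exists by hypothesis). Since the $h$-eigenspace decomposition respects the Weyl-orbit decomposition of $R$, we have $\mathrm{codim}_R R_\pm(h)\ge \mathrm{codim}_{R_{[\chi]}} R_{[\chi],\pm}(h)$, and Lemma~\ref{lWeylOrbit} combined with the percentage estimate $|[\chi]^h|/|[\chi]|\le n_0/n$ from Remark~\ref{rEstimateOrbitSizes} yields
\[
\mathrm{codim}_R R_\pm(h)\;\ge\;\tfrac12\bigl(|[\chi]|-|[\chi]^h|\bigr)\cdot\dim V_\chi\;\ge\;\frac{n-n_0}{2n}\,|[\chi]|.
\]
Inserting the minimum-orbit bound $|[\chi]|\ge 2^{n-n_0}\binom{n}{n_0}$ from Lemma~\ref{lWeylOrbitEstimate}, the entire problem collapses to the purely combinatorial inequality
\[
g(n_0):=(n-n_0)\cdot 2^{n-n_0}\binom{n}{n_0}\;>\;2n^3\qquad\text{for all }0\le n_0\le n-3,\; n\ge 10.
\]

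The crux will be a ratio test for $g$: a short calculation gives
\[
\frac{g(n_0-1)}{g(n_0)}=\frac{2n_0}{n-n_0},
\]
which is $<1$ for $n_0<n/3$ and $>1$ for $n_0>n/3$. Therefore $g$ is unimodal on $\{0,1,\dots,n\}$, so on the subinterval $\{0,1,\dots,n-3\}$ its minimum is attained at one of the endpoints. At $n_0=0$ one has $g(0)=n\cdot 2^n$, which beats $2n^3$ as soon as $2^{n-1}>n^2$, and this holds for $n\ge 10$. At $n_0=n-3$ one gets $g(n-3)=24\binom{n}{3}=4n(n-1)(n-2)$, and the inequality $4n(n-1)(n-2)>2n^3$ simplifies to $n^2-6n+4>0$, which is valid for $n\ge 6$. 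Both endpoints clear the bound, and hence so does the minimum of $g$ on the whole interval, completing the argument.

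The main obstacle will be largely bookkeeping rather than mathematical substance: one must carefully track that the codimension estimate of Lemma~\ref{lWeylOrbit} is computed inside the single summand $R_{[\chi]}$ while the hypothesis of Lemma~\ref{lOrthogonalEstimate} requires a bound on codimension inside all of $R$, and one must justify that the estimate $|[\chi]^h|/|[\chi]|\le n_0/n$ in Remark~\ref{rEstimateOrbitSizes} is indeed uniform in the number $k\ge 1$ of $(-1)$-eigenvalues of $h$ (which is immediate since the set of $h$-fixed weights shrinks as $k$ grows). Once these points are settled, the combinatorial unimodality argument is routine.
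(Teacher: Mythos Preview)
Your proof is correct and follows essentially the same approach as the paper: reduce via Lemma~\ref{lOrthogonalEstimate} and Remark~\ref{rSizeCentralizer} to the uniform bound $\mathrm{codim}\,R_\pm>n^2$, then estimate this codimension on a single Weyl orbit using Lemma~\ref{lWeylOrbit}, the regular-fraction bound $(n-n_0)/n$ from Remark~\ref{rEstimateOrbitSizes}, and the orbit-size minimum from Lemma~\ref{lWeylOrbitEstimate}. The only difference is in the last bookkeeping step: the paper applies the final clause of Lemma~\ref{lWeylOrbitEstimate} with $z=n-3$ (whose hypothesis $\binom{n}{3}<2^{n-3}$ is what forces $n\ge 10$) to get the global orbit-size minimum $8\binom{n}{3}$ directly, and then multiplies by the worst regular fraction $3/n$; you instead bound the product $g(n_0)=(n-n_0)2^{n-n_0}\binom{n}{n_0}$ for each $n_0$ and check both endpoints via unimodality, arriving at the identical inequality $2(n-1)(n-2)>n^2$ at $n_0=n-3$.
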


\begin{proof}
We apply the last part of Lemma \ref{lWeylOrbitEstimate} with $z = n-3$. We have that the overall minimum of the Weyl group orbit cardinality, taken over weights with at least three nonzero entries, is equal to
\[
8 \cdot \frac{n (n-1) (n-2)}{6}
\]
provided that the inequalities
\[
n-3 > \frac{n-2}{3}+1  \; \mathrm{and} \; { n \choose 3 } < 2^{n-3} 
\]
hold. The first one is valid for $n> 5$, but the second one is more stringent and holds for $n \ge 10$. From Remark \ref{rEstimateOrbitSizes} we get that at least a portion of $3/n$ of the weights in $[\chi ]$ is $h$-regular. Hence
\begin{gather*}
\frac{1}{2} \left( | [\chi ] |  - | [\chi ]^h |  \right) \ge \frac{1}{2} \cdot 8 \cdot \frac{3}{n} \cdot \frac{n (n-1) (n-2)}{6} = 2 (n-1)(n-2) >  n^2
\end{gather*}
for $n\ge 10$ (actually for $n\ge 6$, but we were lead to assume $n\ge 10$ before) so that we conclude by Lemmas \ref{lWeylOrbit} and \ref{lWeightsNonzero}.
\end{proof}

Now we can state

\begin{proposition}\xlabel{pCharacterizationFewNonzero}
Theorem \ref{tOrthogonalsgp} holds for $n\ge 10$. More precisely, the only representations $R$ of $\mathrm{O}_{2n} (\CC )$ all of whose weights have $\le 2$ nonzero entries have a nontrivial s.g.p. already as $\mathrm{SO}_{2n} (\CC )$ representations. This last assertion holds also for $n\ge 3$.
\end{proposition}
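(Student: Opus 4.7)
My plan is first to establish the classification statement (valid for $n \ge 3$), and then to deduce Theorem \ref{tOrthogonalsgp} for $n \ge 10$ by combining the classification with Lemmas \ref{lBetterEstimate} and \ref{lOrthogonalEstimate}.

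For the classification, let $R$ be an irreducible $\mathrm{O}_{2n}(\CC)$-representation all of whose weights have at most two nonzero entries, and let $\lambda = (\lambda_1, \dots, \lambda_n)$ be its highest weight. If $\lambda_n \ne 0$, then dominance forces $\lambda_1 \ge \dots \ge \lambda_{n-1} \ge |\lambda_n| > 0$, so the highest weight itself has $n \ge 3$ nonzero entries, a contradiction. Hence $\lambda_n = 0$ and $R$ restricts to the irreducible $\Sigma^{\lambda}_0 \CC^{2n}$ as $\mathrm{SO}_{2n}(\CC)$-module. I would then use the $\Phi$-saturatedness of the weight set (Remark \ref{rPhiSaturatedness}) to show that $\lambda \in \{(0, \dots, 0), (1, 0, \dots, 0), (2, 0, \dots, 0), (1, 1, 0, \dots, 0)\}$. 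The three exclusion cases $\lambda_1 \ge 3$, the pair $\lambda_1 \ge 2, \lambda_2 \ge 1$, and $\lambda_2 \ge 2$ are handled by short $\Phi$-saturated chains starting from $\lambda$: respectively, successively subtracting $\epsilon_1 - \epsilon_2$, $\epsilon_2 - \epsilon_3$, $\epsilon_1 - \epsilon_2$ gives the weight $(\lambda_1 - 2, 1, 1, 0, \dots, 0)$; subtracting $\epsilon_2 - \epsilon_3$ and then $\epsilon_1 - \epsilon_2$ gives $(\lambda_1 - 1, 1, 1, 0, \dots, 0)$; subtracting $\epsilon_2 - \epsilon_3$ alone gives $(\lambda_1, \lambda_2 - 1, 1, 0, \dots, 0)$. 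In each chain, the pairings with the relevant coroots are directly seen to be positive. Direct inspection of the four remaining representations $\CC$, $\CC^{2n}$, $\mathrm{Sym}^2_0 \CC^{2n}$, $\Lambda^2 \CC^{2n}$ confirms that all their weights indeed have at most two nonzero entries, and by Remark \ref{rStabilizerSO} each of them is an E-representation of $\mathrm{SO}_{2n}(\CC)$ for every $n \ge 3$.

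For Theorem \ref{tOrthogonalsgp} when $n \ge 10$: by the reduction preceding Lemma \ref{lOrthogonalEstimate}, any element $h \in \mathrm{O}_{2n}(\CC)$ with $\det h = -1$ that could fix a generic point of $V^{\pm}$ may be assumed, after conjugation and possibly replacing $h$ by $-h$, to lie in $(\ZZ/2\ZZ)^n \subset W(\mathrm{O}_{2n})$ with an odd number $k \le n$ of $(-1)$-eigenvalues, so that $\dim(\mathrm{O}_{2n}/Z_h) = k(2n-k) \le n^2$ by Remark \ref{rSizeCentralizer}. Since $V$ is by hypothesis an R-representation of $\mathrm{SO}_{2n}(\CC)$, the classification just proved forces $V$ to contain some weight with three or more nonzero entries (otherwise $V$ would be one of the four E-representations listed above). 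Lemma \ref{lBetterEstimate} then applies (exactly where $n \ge 10$ is used) and yields the conclusion of Lemma \ref{lOrthogonalEstimate} for this $h$: no such element can stabilize a generic point of $V^{\pm}$, which completes the proof.

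The only slightly technical step is the combinatorial case analysis via $\Phi$-saturatedness in the classification; everything else is a direct assembly of results already proved in this appendix.
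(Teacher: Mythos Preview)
Your proof is correct and follows essentially the same approach as the paper's own argument: classify the representations with only $\le 2$ nonzero weight entries via $\Phi$-saturatedness of the weight set, identify them as the four standard E-representations, and then invoke Lemma \ref{lBetterEstimate} (together with the reduction to $h \in (\ZZ/2\ZZ)^n$ and Remark \ref{rSizeCentralizer}) to conclude Theorem \ref{tOrthogonalsgp} for $n \ge 10$. The only differences are cosmetic: the paper organizes the case split as ``$\lambda_2 \neq 0$ and some $\lambda_i \ge 2$'' versus ``$\lambda_2 = 0$ and $\lambda_1 \ge 3$'', using slightly shorter root chains (one step in the first case, two in the second), whereas you use up to three subtractions; also, in your second case the chain actually lands at $(\lambda_1 - 1, \lambda_2, 1, 0, \dots, 0)$ rather than $(\lambda_1 - 1, 1, 1, 0, \dots, 0)$, but this does not affect the conclusion since $\lambda_2 \ge 1$.
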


\begin{proof}
Suppose $R$ is an $\mathrm{O}_{2n} (\CC )$ representation all of whose weights have $\le 2$ nonzero entries so that in particular this holds for the highest weight
\[
\lambda = (\lambda_1, \dots , \lambda_n ) \, .
\]
We use Remark \ref{rPhiSaturatedness}, the $\Phi$-saturatedness of the set of weights in $R$, again, together with the knowledge of the roots $\pm e_i \pm e_j$, $1 \le i\neq j \le n$. In fact, $\lambda$ can only be of the form
\[
(a, \: b, \: 0 , \dots , 0 ), \quad a,\: b \in \mathbb{Z}, \quad a\ge b\, .
\]
Suppose that $b \neq 0$, and one of $a$ or $b$ is $\ge 2$. Then we get that -depending on which of $a$ or $b$ is $\ge 2$- that $\lambda - \epsilon_1 + \epsilon_3$ or $\lambda -\epsilon_2 + \epsilon_3$ is also a weight of $R$ with $\ge 3$ nonzero entries. Thus if $b \neq 0$ this leaves only $\lambda$ equal to $(1,\: 1, \: 0, \dots , 0)$ which gives the $E$-representation $\Lambda^2_0 \CC^{2n}$. Now suppose that $b = 0$, but $a\ge 3$. Then $\lambda - (\epsilon_1 - \epsilon_2) - (\epsilon_1 - \epsilon_3)$ will be a weight of $R$ with at least three nonzero entries. Hence we might only have
\[
R = \mathrm{Sym}^2_0 \CC^{2n}\, , \quad   R = \CC^{2n} , \quad \mathrm{or } \quad R = \CC
\]
all of which are $E$-representations. 
\end{proof}

Let us investigate in more detail what happens for $2\le n\le 9$. 
The first thing that is special for $n\le 9$ is that, among all weights $\chi$ with at least three nonzero entries, the minimum of the Weyl orbit cardinality is not attained for a weight with precisely $3$ equal nonzero entries, but for an absolutely constant weight, and the minimum there is $2^n$. However, we can get further with slightly sharper estimates.

\begin{lemma}\xlabel{lDownToSix}
Let $6\le n \le 9$. Suppose the $\mathrm{O}_{2n} (\CC )$ representation $R$ contains a weight $\chi$ with $\ge 3$ nonzero entries. Then $[\chi ] - [\chi ]^h$ has more than $2n^2$ elements, hence by Proposition \ref{pCharacterizationFewNonzero}, we get that Theorem \ref{tOrthogonalsgp} holds for $n\ge 6$.
\end{lemma}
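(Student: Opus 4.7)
The plan is to produce, starting from any weight $\chi$ of $R$ with $\ge 3$ nonzero entries, a weight $\tilde{\chi}$ of $R$ (possibly $\chi$ itself) whose Weyl orbit satisfies $|[\tilde{\chi}] - [\tilde{\chi}]^h| > 2n^2$. Combined with Lemma~\ref{lWeylOrbit}, this gives $\mathrm{codim}(R_{\pm}) > n^2 \ge \dim(\mathrm{O}_{2n}(\CC)/Z_h)$ (Remark~\ref{rSizeCentralizer}), so Lemma~\ref{lOrthogonalEstimate} excludes $h$ and $-h$ from the stabilizer of a generic point of $R$. Proposition~\ref{pCharacterizationFewNonzero} guarantees that an R-representation $V$ must contain some weight with $\ge 3$ nonzero entries (else $V$ is one of the standard E-representations), so this suffices to establish Theorem~\ref{tOrthogonalsgp} for $n\ge 6$.

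First I would split according to the shape of $\chi$. If $\chi$ has at least one zero entry, I take $\tilde{\chi} = \chi$ and let $n_0\ge 1$ denote the number of zero entries; since $\chi$ has $\ge 3$ nonzero entries, $n_0\le n-3$. Combining Lemma~\ref{lWeylOrbitEstimate} (minimum orbit size $2^{n-n_0}\binom{n}{n_0}$) with Remark~\ref{rEstimateOrbitSizes} (the ratio $|[\chi]^h|/|[\chi]|$ is at most $n_0/n$) gives
\[
|[\chi] - [\chi]^h| \;\ge\; 2^{n-n_0}\binom{n}{n_0}\cdot\frac{n-n_0}{n}.
\]
A short case-by-case calculation verifies this exceeds $2n^2$ for every $n\in\{6,7,8,9\}$ and every $n_0\in\{1,\dots,n-3\}$; the tightest instance is $(n,n_0)=(6,3)$, where the bound equals $80>72$. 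Next, if $\chi$ has no zero entries but not all of its entries have the same absolute value, then Lemma~\ref{lWeightsNonzero} yields $|[\chi]|\ge 2^{n+1}$, and every element of $[\chi]$ is $h$-regular (sign changes move any weight with all entries nonzero), so $|[\chi] - [\chi]^h| = |[\chi]|\ge 2^{n+1}$, which exceeds $2n^2$ for $n\ge 5$.

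The main obstacle, and the only case in which $\chi$ itself does not suffice, is when $\chi$ is absolutely constant, i.e.\ $|\chi_i|=a$ for a fixed positive integer $a$ and all $i$. Here $[\chi]^h=\emptyset$ because every entry of every weight in $[\chi]$ is $\pm a\ne 0$, so $|[\chi]-[\chi]^h| = |[\chi]| = 2^n$; for $n=6$ this is only $64<72=2n^2$, and a direct estimate on $\chi$ fails. To circumvent this, I would invoke $\Phi$-saturatedness (Remark~\ref{rPhiSaturatedness}): after applying the Weyl action one may assume $\chi=(a,a,\dots,a)$, and for the root $\alpha=\epsilon_{n-1}+\epsilon_n$ we have $\langle\chi,\alpha^\vee\rangle=2a>0$, so $\tilde{\chi} := \chi - a\alpha = (a,\dots,a,0,0)$ is also a weight of $R$. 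This $\tilde{\chi}$ has $n_0=2$ zero entries and $n-2\ge 4$ nonzero ones, so the first subcase applies to $\tilde{\chi}$ and yields $|[\tilde{\chi}]-[\tilde{\chi}]^h| \ge 2^{n-2}\binom{n}{2}(n-2)/n$; for $n=6$ this equals $160>72$, and the inequality only improves for larger $n$. This completes the reduction and hence the proof of the lemma.
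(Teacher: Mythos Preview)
Your proof is correct and follows essentially the same route as the paper: both use the estimate $|[\chi]-[\chi]^h|\ge 2^{a}\binom{n}{a}\cdot\frac{a}{n}$ (with $a=n-n_0$ the number of nonzero entries) and verify it exceeds $2n^2$ for $3\le a\le n-1$, then deal separately with the case $a=n$ by passing to another weight of $R$. The only minor difference is in this last step: the paper invokes the highest weight $\lambda$ (which has $\lambda_n=0$ in the setting of the appendix) and the argument of Proposition~\ref{pCharacterizationFewNonzero} to locate a weight with $\ge 3$ nonzero entries and at least one zero, whereas you split $a=n$ into ``not absolutely constant'' (handled by the $2^{n+1}$ orbit bound of Lemma~\ref{lWeightsNonzero}) and ``absolutely constant'' (handled by $\Phi$-saturatedness applied directly to $\chi$ to produce $(a,\dots,a,0,0)$). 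Your variant is self-contained and does not rely on the hypothesis $\lambda_n=0$, which is a small bonus, but the underlying mechanism is the same.
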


\begin{proof}
Suppose the weight $\chi$ contains precisely $a$ nonzero entries. By Lemma \ref{lWeylOrbitEstimate} we know that the minimum Weyl orbit cardinality among weights with precisely $n-a$ zeroes is $2^a { n \choose a }$, and by Remark \ref{rEstimateOrbitSizes} we know that at least a portion of $a/n$ of the orbit elements is $h$-regular. Hence it suffices to check that the inequality
\[
2 n^2 <  2^a { n \choose a } \frac{a}{n}Ê\quad \mathrm{or} \quad n^2 < 2^{a-1} { n-1 \choose a-1 }
\]
holds for all $6 \le n \le 9$ and all $3\le a \le n-1$. Note that we can assume $a \le n-1$  (for $a = n = 6$ the inequality would fail as $36 > 2^5$):  in fact, under the assumptions of the Lemma, $R$ will also contain a weight with at least one zero and at least three nonzero entries. Namely, the highest weight $\lambda$ has $\lambda_n =0$, so if $\lambda$ has already three nonzero entries we are done. If $\lambda$ contains only $\le 2$ nonzero entries (then necessarily $\lambda_1$ and $\lambda_2$), then the method explained in the proof of Proposition \ref{pCharacterizationFewNonzero} will produce from $\lambda$ a weight with three nonzero entries \emph{and} at least one zero entry for $n\ge 6$, unless $R$ is already an $E$-representation.
\end{proof}

Thus the cases $n=2, \: 3, \: 4, \: 5$ have still to be considered. We will exclude $n=2$ for the moment and treat it last. It will be useful to tabulate the minimum possible value for
\[
\frac{1}{2} \left( | [\chi ] |  - | [\chi ]^h |  \right)
\]
depending on the number $a$ of nonzero entries of $\chi$, which we know to be $F(n, a):= (1/2) 2^{a} (a/n) { n \choose a }$, which is attained when all the nonzero entries are equal.

\

\begin{center}
\begin{tabular}{| c || c | c | c | c | c || c |}
\hline 
  $F(n, a)$             &   $a=1$ &  $a=2$  &  $a = 3$  &  $a=4$  &  $a=5$ & $n^2$ \\  \hline\hline
 $n=5$  &    $1$     &    $8$          &    $24$             &      32         &    $16$   & $25$      \\ \hline
 $n=4$  &     $1$          &   $6$            &    $12$             &       $8$        &         & $16$      \\ \hline
 $n=3$  &     $1$         &    $4$          &       $4$          &               &        & $9$  \\ \hline
\end{tabular}
\end{center}

\

Instead of only one Weyl group orbit, we now consider several orbits, more precisely we want to try to look for weights $\chi_1$, $\chi_2, \dots $ in the representations $R$ of $\mathrm{O}_{2n} (\CC )$ for $n=3,\: 4,\: 5$, such that the corresponding Weyl group orbits are pairwise disjoint, and contain in total more than $2n^2$ elements. 

\

\textbf{The case $n=5$}. Suppose that the highest weight $\lambda$ of $R$ contains at least three nonzero entries $\lambda_1 \ge \lambda_2 \ge \lambda_3 > 0$. Note that in any case $\lambda_5 =0$ (this is the type of representation we consider). If one $\lambda_i>1$, $i=1, \dots , 3$, then by Remark \ref{rPhiSaturatedness}, $\chi= \lambda - (\epsilon_i  - \epsilon_5)$ is also a weight of $R$, and the Weyl group orbits of $\lambda$ and $\chi$ are disjoint since $\chi$ contains less zero entries than $\lambda$. Then we are done (cf. the table, $24 + 32 > 25$). 

\

Suppose now that there are only two nonzero entries $\lambda_1 \ge \lambda_2 > 0$ or only one $\lambda_1 > 0$. If in the first case either $\lambda_1$ or $\lambda_2$ is $\ge 2$ then we get also a weight with exactly three nonzero entries in $R$. Hence we are done as $8 + 24 > 25$. And $\Lambda^2_0 \CC^{10}$ is an $E$-representation. If in the second case $\lambda_1 \ge 3$, then we get that $\chi =  \lambda - (\epsilon_1 - \epsilon_2)$ and $\chi' = \chi - (\epsilon_1 - \epsilon_3)$ also belong to $R$. They are all in distinct Weyl group orbits as they have different number of zero entries, and these contain enough regular elements. For $\lambda_1 < 3$ we get $E$-representations again. 

\

So it remains to consider only the exterior powers  $\lambda = (1, 1 , 1, 0, 0)$ resp. $(1 , 1, 1, 1, 0)$. In the second case we have that also $(1, 1, 0, 0, 0)$ is a weight and we are done. In the first case we get that also $(1, 0, 0, 0, 0)$ is a weight. Unfortunately, according to the table $1 + 24$ is just too small to be strictly bigger than $25$. However note that the weight space of $(1, 0, 0, 0, 0)$ has dimension $4$ so that by the estimate in Lemma \ref{lWeylOrbit} we are done.

\

\textbf{The case $n=4$}.  Assume again first that the highest weight $\lambda$ of $R$ contains at least three nonzero entries $\lambda_1 \ge \lambda_2 \ge \lambda_3 > 0$. If one $\lambda_i>1$, $i=1, \dots , 3$, we can argue as in the case $n=5$ above since $12 + 8 > 16$. 

\

If there are at most two nonzero entries, we argue precisely as in the case $n=5$ above, so it remains to consider the exterior power $\lambda = (1,1, 1, 0)$ here. It contains also the weight $(1, 0,0,0 )$ with multiplicity $3$, but $3 + 12=15  < 16$, so our estimates are too crude here. We have to distinguish the cases that $h \in (\ZZ /2 \ZZ)^4$ has $k=1$, $2$ or $3$ eigenvalues $-1$. By Remark \ref{rSizeCentralizer} it suffices that the codimensions of the $+1$ and $-1$ eigenspaces of $h$ are bigger than $(2n-k)k$ which is  $15$ for $k=3$, $12$ for $k=2$, $7$ for $k=1$. So we just have to reconsider the case $k=3$. What is too crude in the case $k=3$ is the estimate for the portion of regular elements in an orbit coming from Remark \ref{rEstimateOrbitSizes}: for the weight $(1, 0, 0, 0)$ it is not $1/4$ (as it would be for $k=1$). In fact, there are $6$ regular elements in the orbit contributing at least an amount of $3$ to the codimensions of the $+1$ and $-1$ eigenspaces of $h$. However, $12 + 3 \cdot 3 > 15$ and we are done.

\

\textbf{The case $n=3$}. These representations $R$ have highest weights $\lambda = (\lambda_1, \: \lambda_2,\:  0)$, $\lambda_1 > \lambda_2 \ge 0$. Let us assume first that $\lambda_1 \ge 4$. Then $\chi = \lambda - (\epsilon_1 - \epsilon_3)$ and $\chi' = \lambda - 2 (\epsilon_1 - \epsilon_3)$ are both weights of $R$. Together with $\lambda$ they generate three pairwise distinct Weyl group orbits ($\chi$ and $\chi'$ have less zeroes than $\lambda$, and $\chi$ and $\chi'$ differ in the number of entries $1$ and $2$). If $\lambda_2 > 0$ we conclude as $4 + 2\cdot 4 > 9$. If $\lambda_2 = 0$, but $\lambda_1 \ge 4$, we also have the weight $\chi'' = \chi - (\epsilon_1 -\epsilon_2)$ and $3\cdot 4 + 1 > 9$.

\

Thus the only cases remaining are $\lambda_1 \le 3$. For the highest weights $(3,3,0), \: (3,2,0), \: (3,1,0)$ and $(2,2,0)$ one can easily find three weights in $R$ in distinct Weyl group orbits, and having $\ge 2$ nonzero entries, using the $\Phi$-saturatedness. Thus we are left with
\[
(3,0,0),  \; (2,1,0) 
\]
because the rest are $E$-representations of $\mathrm{SO}_6 (\CC )$. As in the case $n=4$ we will consider again subcases here, according to whether $h$ has $1$ or $3$ eigenvalues $-1$ (it has to be an odd number). We need codimensions of the $+1$ resp. $-1$ eigenspaces of $h$ strictly bigger than $(6-k)k$ which is $5$ for $k=1$ and $9$ for $k=3$. Thus the case $k=1$ is no problem (we can take the Weyl orbits corresponding to $(3,0,0), \: (2,1,0), \: (1,1,1)$ in the first case, and the Weyl orbits of $(2,1,0), \: (1,1,1)$ in the second case). For $k=3$, again, the portion of $h$-regular elements in an orbit is higher than the portion we used for our standard estimates: in fact, every weight which is not identically $0$ is $h$-regular in this case. We need $> 2\cdot 9 =18$ regular weights, and these can be found in the orbit of $(2,1,0)$ in both cases.

\

Finally we consider

\

\textbf{The case $n=2$}. These representations $R$ have highest weights $\lambda = (\lambda_1, \:  0)$ and we may assume $\lambda_1 \ge 3$ to exclude obvious $E$-representations. Here, since $h\in\mathrm{O}_4 (\CC ) \backslash \mathrm{SO}_4 (\CC )$, we need only consider the case that $h$ has one eigenvalue $-1$, so $k=1$. We have to produce Weyl group orbits containing in total strictly more than six $h$-regular elements. This is easy: each of the above representations contains a weight with both entries nonzero and of different absolute values, and the orbit contains $8$ elements.

\end{document}